


\documentclass[preprint]{siamart171218}





\usepackage[utf8]{inputenc}
\usepackage{amssymb}
\usepackage{amsmath}
\usepackage{algorithm}
\usepackage{algorithmic}
\usepackage{bm}

\usepackage{afterpage}

\usepackage[section]{placeins}

\makeatletter
\AtBeginDocument{%
  \expandafter\renewcommand\expandafter\subsection\expandafter{%
    \expandafter\@fb@secFB\subsection
  }%
}
\makeatother

\usepackage{multirow}
\usepackage{color}

\newtheorem{remark}{Remark}

\usepackage{geometry}

\usepackage{float}

\usepackage{subcaption}

\captionsetup{belowskip=0pt}

\usepackage{mathtools}



\captionsetup{width=0.95\linewidth}
\setlength{\abovecaptionskip}{3pt}

\setlength{\floatsep}{2pt}
\setlength{\textfloatsep}{8pt}

\renewcommand{\chi}{\mathcal{X}}

\newcommand{\He}{\mathcal{H}}
\newcommand{\Ft}{\mathcal{F}}

\newcommand{\gG}{{\bm G}}

\newcommand{\gM}{{\bm M}}

\newcommand{\gV}{{\bm U}}

\newcommand{\fC}{\mathcal{C}}
\newcommand{\fV}{\mathcal{V}}

\newcommand{\bigO}{\mathcal{O}}
\newcommand{\incr}{ \Delta}

\newcommand{\vu}{{\bm u}}

\newcommand{\vx}{{\bm x}}
\newcommand{\vy}{{\bm y}}

\newcommand{\vg}{{\bm \gamma}}

\newcommand{\vX}{{\bm X}}
\newcommand{\vhX}{{\bm \chi}}

\newcommand{\hstro}{ \psi^n }

\newcommand{\R}{{\mathbb{R}}}

\newcommand{\curl}{\nabla \times}
\renewcommand{\div}{\nabla \cdot}
\newcommand{\grad}{\nabla}
\newcommand{\Laplace}{ \mathop{{}  \bigtriangleup }\nolimits }
\newcommand{\PSolve}{ \mathop{{}  \bigtriangleup^{-1} }\nolimits }



\definecolor{myPurp}{rgb}{0.5, 0.0, 1.0}

\title{A Characteristic Mapping Method for the two-dimensional incompressible Euler equations}



\author{Xi-Yuan Yin\thanks{Department of Mathematics and Statistics, McGill University, Montr\'{e}al, Qu\'{e}bec H3A 0B9, Canada}
\and Olivier Mercier\footnotemark[1]
\and Badal Yadav\footnotemark[1]
\and Kai Schneider\thanks{I2M-CNRS, Centre de Math\'{e}matiques et d’Informatique, Aix-Marseille Universit\'{e}, 13453 Marseille Cedex 13, France}
\and Jean-Christophe Nave\footnotemark[1]  \thanks{Corresponding author. E-mail address: \email{jcnave@math.mcgill.ca}.}}

\begin{document}

\maketitle




\begin{abstract}
We propose an efficient semi-Lagrangian method for solving the two-dimensional incompressible Euler equations with high precision on a coarse grid. The new approach evolves the flow map using the gradient-augmented level set method (GALSM).  Since the flow map can be decomposed into submaps (each over a finite time interval), the error can be controlled by choosing the remapping times appropriately.  This leads to a numerical scheme that has exponential resolution in linear time. 
Error estimates are provided and conservation properties are analyzed.
The computational efficiency and the high precision of the method are illustrated for a vortex merger and a four mode and a random flow. Comparisons with a Cauchy-Lagrangian method are also presented.

\end{abstract}

%
%
%
%





\section{Introduction} \label{Intro}

The numerical simulation of incompressible fluids is an important mathematical problem with many scientific and industrial applications. The study of the incompressible Euler equations in two dimensional (2D) space poses many challenges. It is well known that solutions of the 2D Euler equations, although smooth, have fast growing gradients. The growth of the sup-norm of the vorticity gradient can be bounded by a double exponential in time \cite{yudovich1963flow}, this bound has been proven to be sharp in the case of smooth initial data on a disk \cite{kiselev2014small}. Furthermore, on a flat periodic 2-torus, it has been shown that for an arbitrary time interval, there exists a vorticity field whose gradient exhibit exponential growth within the chosen interval \cite{denisov2009infinite}. For a review on summarizing advances of mathematics for the Euler equations, we refer to a 2013 paper by Bardos and Titti \cite{bardos2013mathematics}. These results suggest that the spatial discretization for the numerical solution of Euler equations can be challenging. As the gradient of the solution grows, the numerical resolution required to correctly capture the solution also increases. The resources necessary to avoid excessive spatial truncation errors can thus become prohibitive for long time simulations. On the other hand, allowing for spatial truncation errors by undersampling the solution can generate numerical dissipation akin to a viscosity term which qualitatively affects the simulation. In some cases, truncation errors resonate with the solution, creating spurious oscillations and numerical instability. This phenomenon is analysed in detail by Ray et al. \cite{ray2011resonance} in the case of a conservative Fourier-Galerkin scheme, and a regularization technique has been proposed by Pereira et al. in \cite{pereira2013wavelet}.

Existing numerical methods for fluid simulation lie in the spectrum from fully Lagrangian to fully Eulerian formulation. Eulerian methods use fixed spatial meshes to represent evolving quantities. This gives easy access to the relevant simulated quantities. The spatial features generated by large fluid deformations can be represented as long as the grid has sufficient resolution. However, finer scale features are lost resulting in artificial dissipation. Examples of Eulerian methods include the Fluid-in-Cell \cite{gentry1966eulerian} method and the Marker-in-Cell method \cite{harlow1965numerical}, where fluid quantities are associated to fixed spatial subdivisions and updated in time using the evolution equations. In Lagrangian methods, particles or grid nodes follow the movement of the flow, resulting in less dissipative schemes. However, with the particle or moving grid framework, the evaluation of Eulerian quantities is difficult and less accurate. Furthermore, large fluid deformations can be difficult to resolve and often require frequent remeshing or resampling routines. Examples of purely Lagrangian methods include Smooth Particle Hydrodynamics \cite{gingold1977smoothed, monaghan2005smoothed} and Vortex blob method \cite{chorin1973numerical}, where fluid quantities are transported using a particle method. Many numerical solvers use a mix of Lagrangian and Eulerian description to exploit the advantages of each approach. For instance, Arbitrary Lagrangian-Eulerian \cite{hirt1974arbitrary, donea1982arbitrary} methods use reference coordinates that are neither fully Lagrangian nor Eulerian to describe the fluid configuration. Another large family of numerical solvers are the semi-Lagrangian methods. In these methods, relevant quantities are represented on an Eulerian frame, however, the evolution equations are discretized from a Lagrangian description. Methods of this type include the Cauchy-Lagrangian method \cite{frisch} where the vorticity field is evolved by transport along a moving mesh following the flow, the result is then periodically projected back on an Eulerian grid. Level-set methods are another popular semi-Lagrangian framework used for fluid simulation and implicit interface tracking, see \cite{osher2006level}. In these methods, characteristics are traced backwards in time and Eulerian interpolation schemes are used to update the solution. Semi-Lagrangian schemes have also been used to generate fast simulations intended for computer graphics \cite{stam1999stable, foster2001practical}. One main property of the semi-Lagrangian approach is that it tries to captures the characteristic structure of the equations. In particular, in 2D inviscid flow, the vorticity field is transported along characteristic curves, hence accuracy and stability can be improved by taking the geometric approach of following these characteristic curves in order to propagate the solution in time.

In this paper, we present a method where the evolved numerical quantity is the deformation map of the domain generated by the fluid velocity. This approach is geometric and fully exploits the characteristic structure of the fluid flow. We construct numerically a characteristic map which identifies a point on an arbitrary characteristic curve to its initial position through a spatial transformation. Advected quantities, in particular vorticity, can then be constructed as the function composition of the initial condition with the characteristic map. This method stems from the semi-Lagrangian Gradient-Augmented Level-Set (GALS) \cite{nave2010gradient} and Jet-Scheme methods \cite{seibold2011jet} and constitutes an extension of the Characteristic Mapping (CM) method for linear advection \cite{kohno2013new, CM} to the self-advection in 2D incompressible Euler. The CM method for Euler splits the evolution equations into the advection of the vorticity and the computation of the velocity through the Biot-Savart law. These two parts are connected in that the advection provides the vorticity field for the Biot-Savart kernel, whose resulting velocity field is then used to advect the CM map. In doing so we achieve a separation of scales: under the assumption that the flow is governed by large scale features of the velocity field, the characteristic map can be accurately evolved on a coarse grid. Furthermore, since the vorticity solution is constructed through the pullback of the initial condition by the characteristic map, the functional definition of the solution provides arbitrarily fine spatial resolution.

One main issue in inviscid fluid simulations is the artificial viscosity incurred from the spatial truncations during the evolution of the solution. Some methods such as \cite{frisch} approach this problem by designing high order methods in order to take extremely large time steps, hence minimizing the accumulation of the diffusive error. Others employ an adaptive multi-resolution mesh refinement to efficiently resolve fine scale features \cite{deiterding2016comparison, kolomenskiy2016adaptive}. One unique property of the CM method for 2D Euler demonstrated in this paper is that it completely eliminates artificial viscosity by never directly evolving a discretized vorticity field: the vorticity changes as a consequence of the evolving characteristic map used to compute the pullback. In particular, a straightforward consequence is that the extrema of the vorticity field are conserved for all times. Additionally, in order to correctly represent the arbitrarily fine scales generated by the flow, we use a time decomposition of the characteristic map based on its semigroup structure. This allows the CM method to represent exponentially growing vorticity gradients while only computing on a fixed coarse grid. The resulting scheme achieves arbitrary subgrid resolution, high long term enstrophy conservation and is void of artificial dissipation.

The rest of the paper is organized as follows: in section 2 we lay out the mathematical framework for the CM method and carry out some heuristic analysis of its properties. In section 3, we present in detail the numerical implementation of the method and provide some error bounds. Section 4 contains several numerical tests and discussions on the accuracy and qualitative properties of the solutions. Finally, we make some concluding remarks and propose future directions of work in section 5.

\section{Mathematical Framework} \label{sec:mathFrame}

In this section, we present the mathematical framework behind the Characteristic Mapping (CM) method for 2D incompressible Euler. The section is organized as follows: first, we present the equations with the advection/Biot-Savart splitting used in the method. Then we look at the CM method for the advection equation along with its intrinsic semigroup structure. We will next examine the advection-vorticity coupling of the equation through the Biot-Savart law. Finally, we will put everything together to write the modified equation which naturally arises from a numerical CM method for Euler equations.

The 2D incompressible Euler equations are:
\begin{subequations} \label{eqGroup:Euler}
\begin{align} 
\partial_t \vu + (\vu \cdot \grad ) \vu = - \grad p   &  \quad \quad (\vx, t) \in U \times \R_+ ,  \label{eq:Euler} \\
\div \vu = 0 \label{eq:divfree}  , \\
\vu(\vx, 0) = \vu_0(\vx),
\end{align}
\end{subequations}
where $U$ is the fixed spatial domain, $\vu$ is a vector field describing the instantaneous velocity of a fluid element and $p$ is the pressure. For this paper, we assume for simplicity that $U$ is the flat 2-torus and hence there are no boundary conditions. In general, for a boundary $\partial U$, the boundary condition for inviscid flow is $\vu \cdot \bm{n}_{\partial U} = 0$ on $\partial U$ where $\bm{n}_{\partial U}$ is the normal vector to the boundary. This will require further extensions to the framework and is not covered in this paper.

Define $\omega = \curl \vu$ the scalar vorticity of the fluid, and taking a 2D curl of \ref{eqGroup:Euler}, we obtain the vorticity equations:
\begin{subequations} \label{eqGroup:EulerVort}
\begin{align} 
\partial_t \omega + (\vu \cdot \grad ) \omega = 0   &  \quad \quad (\vx, t) \in U \times \R_+ ,  \label{eq:EulerVort} \\
\div \vu = 0 , \label{eq:divfreeVort} \\
\omega(\vx, 0) = \omega_0 (\vx) .
\end{align}
\end{subequations}
Using the incompressibility assumption, $\vu$ can be obtained from $\omega$ by solving a Helmholtz-Hodge problem, $\vu$ is then given by the Biot-Savart law 
\begin{equation} \label{eq1:BiotSavart}
\vu = - \PSolve \curl \omega .
\end{equation}

Given a solution $\vu(\vx, t)$ of \ref{eqGroup:Euler}, we first make the observation that $\omega$ solves the advection equation \eqref{eq:EulerVort} under the velocity field $\vu$. The method of characteristics for advection equations implies that $\omega$ satisfies
\begin{equation} \label{eq:vortCharEq}
\frac{d}{dt} \omega(\vg(t), t) = 0,
\end{equation}
for any characteristic curve $\vg$ solving
\begin{subequations} \label{eqGroup:charCurve}
\begin{align} 
\frac{d}{dt} \vg(t) = \vu(\vg(t), t) , \\
\vg(0) = \vg_0 .
\end{align}
\end{subequations}
The smoothness of these characteristic curves is proven in \cite{constantin2015analyticity}.

The approach presented in this paper consists in splitting the vorticity equations \eqref{eqGroup:EulerVort} into the coupling of the advection of the vorticity by \eqref{eq:vortCharEq} and the Biot-Savart law \eqref{eq1:BiotSavart}. The advection equation is solved using the Characteristic Mapping method \cite{CM} and the Biot-Savart law is applied in Fourier space. We present details on these two methods in the following sections.

\subsection{Characteristic Mapping Framework}
The Characteristic mapping approach consists in finding the solution operator for the advection equation associated to some transport velocity. A more detailed exposition of this approach can be found in \cite{CM, nave2010gradient, kohno2013new}.

Consider a linear advection equation with a divergence-free transport velocity $\vu$
\begin{subequations} \label{eqGroup:Adv}
\begin{align} 
\partial_t \phi + (\vu \cdot \grad) \phi = 0 &  \quad \quad (\vx, t) \in U \times \R_+ , \\
 \phi(\vx, 0) = \phi_0(\vx) . &
\end{align}
\end{subequations}

A solution $\phi$ of the advection equation must satisfy 
\begin{gather}
\frac{d}{dt} \phi(\vg(t), t) = 0 ,
\end{gather}
or equivalently
\begin{gather} \label{eq:pbCharCurve}
\phi(\vg(t), t) = \phi_0(\vg_0)
\end{gather}
for all characteristic curves $\vg$ satisfying \eqref{eqGroup:charCurve}.

Therefore, we look for a solution operator $\vX : U \times \R_+ \to U $ such that $\vX (\vg(t), t) = \vg_0$ for all characteristic curves $\vg(t)$ associated to the velocity $\vu$. For any fixed $t$, the map $\vX (\cdot, t) : U \to U$ is the inverse of the $t$-time flow map of the velocity $\vu$. We call $\vX$ the \emph{backward characteristic map} since it ``traces back'' a particle at time $t$ to its initial position $\vg_0$. Assuming smooth divergence-free velocity, the flow map is a diffeomorphism, hence, we can write $\vX$ as the solution to the following vector valued advection equation:
\begin{subequations} \label{eqGroup:AdvMapEqn}
\begin{align} 
\partial_t \vX + (\vu \cdot \grad) \vX = 0 & \quad \quad  \forall (\vx, t) \in U \times \R_+ ,\\
 \vX(\vx, 0) = \vx .&
\end{align}
\end{subequations}
One can check that $\frac{d}{dt} \vX(\vg(t), t) = 0$ and hence $\vX(\vg(t), t) = \vX(\vg_0, 0) = \vg_0$. 

\begin{remark}
Wolibner proved in 1933 \cite{wolibner1933theoreme} that for an $L^1$ initial vorticity field (with appropriate decay at infinity), the corresponding velocity field (which is log-Lipschitz) solving the Euler equations generates a H\"older continuous forward characteristic map. That is to say, the transformation $\Phi(\cdot, t) : \gamma_0 \mapsto \gamma(t)$ is a H\"older continuous homeomorphism for all $t$. The existence, continuity and area-preservation of the backward map $\vX$ follows since $\vX(\cdot, t) = \Phi^{-1}(\cdot, t)$.
\end{remark}

The solutions to the advection equation \ref{eqGroup:Adv} can be then computed as the pullback of the initial condition by this backward map using \ref{eqGroup:Adv}
\begin{gather}
\phi(\vx, t) = \phi_0 \left( \vX( \vx, t)  \right) .
\end{gather}

In fact, for any initial condition $\phi_0$, the corresponding advection equation has solution $ \phi_0 \circ \vX$. The characteristic map is independent of the advected quantity, it captures the deformation of the space induced by the velocity $\vu$ and acts as a solution operator of all quantities transported by $\vu$.

\subsubsection{Semigroup structure} \label{sec:SemiGroup}
The treatment of incompressible inviscid flow from the point of view of differential geometry and geodesic flow on the group of volume preserving diffeomorphisms was initiated by Arnold in 1966 \cite{arnold1966geometrie}. The backward characteristic maps are the inverse maps of the elements of the one-parameter semigroup of these volume preserving forward flow maps parametrized by $t$. Hence, they inherit the same semigroup properties.

Consider the solution operator $\vX^\dag$ of the advection equation taking some arbitrary time $t^\dag$ as initial time:
\begin{subequations} \label{eqGroup:SubmapEqn}
\begin{align} 
\partial_t \vX^\dag +  (\vu \cdot \grad) \vX^\dag = 0  , \\
 \vX^\dag(\vx, t^\dag) = \vx .
\end{align}
\end{subequations}
We use the notation
\begin{gather}
\vX_{[t, t^\dag]} (\vx) = \vX^\dag (\vx, t) ,
\end{gather}
for $t > t^\dag$. Here $\vX_{[t, t^\dag]}$ is the backward characteristic map generated by $\vu$ in the time interval $[t^\dag, t]$ and traces back along characteristics a particle at position $\vx$ at time $t$ to its position $\vX_{[t, t^\dag]} (\vx)$ at time $t^\dag$. One can check that given arbitrary times $t_0 < t_1< t_2$, we have
\begin{gather} \label{eq:semigroup}
\vX_{[t_2, t_0]} = \vX_{[t_1, t_0]} \circ \vX_{[t_2, t_1]}.
\end{gather}
This is in fact true without the $t_0 < t_1< t_2$ assumption and would involve forward characteristic maps. This paper will only focus on the use of the backward maps.

This semigroup structure is at the heart of the CM method. The time evolution of the characteristic map is defined though this map composition rather than through the usual PDE. We can see the evolution of $\vX$ as given by
\begin{gather} \label{eq:semigroupInfinitess}
\vX(\cdot, t+ \incr{t}) = \vX (\cdot, t) \circ \vX_{[t+ \incr{t}, t]} ,
\end{gather}
The numerical approximation of $\vX$ relies on discretizing the above using some small time step. 

We also make use of this semigroup property to adaptively adjust the numerical resolution of the map. In particular, a backward characteristic map $\vX( \cdot, t)$ for the time interval $[0, t]$ can be split into arbitrarily many submaps in the following way:
 
We subdivide the interval $[0, t]$ into $m$ subintervals $[\tau_{i-1}, \tau_i]$ with $0 = \tau_0 < \tau_1 < \cdots < \tau_m = t$. One can then check that the following decomposition holds:
\begin{gather} \label{eq:SubmapDecomp}
\vX (\cdot, t) = \vX_{[t, 0]} =  \vX_{[ \tau_1, 0]} \circ  \vX_{[\tau_2, \tau_1]} \circ \cdots \circ  \vX_{[\tau_{m-1}, \tau_{m-2}]} \circ  \vX_{[t, \tau_{m-1}]}
\end{gather}
Each of the submaps $\vX_{[\tau_i, \tau_{i-1}]}$ can be computed individually and stored. The global time map is then defined as the composition of all the stored submaps. This decomposition will provide several important numerical advantages which we will discuss in section \ref{sec:NumMethod}. In particular, each submap has the identity map as initial condition and the subdivision allows for dynamic and adaptive spatial resolution without changing the computational grid.

In 2D incompressible Euler, the vorticity gradient can grow exponentially in time. Using the characteristic map, we can write
\begin{gather}
\omega(\vx, t) = \omega_0 (\vX(\vx, t)).
\end{gather}
We observe from this that the advection operator is responsible for the formation of high vorticity gradient since
\begin{gather}
\grad \omega(\vx, t) = \grad \omega_0 \cdot \grad \vX (\vx, t).
\end{gather}
In cases where the vorticity gradient grows exponentially, we infer that $\grad \vX$ must also grow exponentially. The semigroup decomposition is analogous to the exponential function in one variable, where the natural instantaneous evolution is multiplicative instead of additive in the sense that $\exp (c(t+\incr{t})) \approx  (1 + c\incr{t}) \exp (ct) $ is more natural than $\exp (c(t+\incr{t})) \approx \exp (ct) + c \incr{t} \exp (ct)$ (the latter requiring tracking an integrand which grows exponentially). Similarly, taking the gradient of equation \eqref{eq:SubmapDecomp}, we have
\begin{gather} \label{eq:SubmapDecompGrad}
\grad \vX_{[t, 0]} =  \grad \vX_{[ \tau_1, 0]} \grad \vX_{[\tau_2, \tau_1]}  \cdots   \grad \vX_{[\tau_{m-1}, \tau_{m-2}]} \grad \vX_{[t, \tau_{m-1}]} .
\end{gather}
This means that an exponential growth in gradient can be achieved by the composition of submaps each having bounded gradient.

\subsection{Advection-Vorticity Coupling}
We use the backward characteristic map to rewrite the vorticity equation \ref{eq:EulerVort} as the following coupling of $\vu, \omega$ and $\vX$:
\begin{subequations} \label{eqGroup:CME_Eqn}
\begin{align} 
\omega(\vx, t) = \omega_0 (\vX(\vx, t)) , \label{eq:VortPB} \\
\vu = - \PSolve \curl \omega  , \label{eq:BiotSavart} \\
(\partial_t + \vu \cdot \grad) \vX = 0 . \label{eq:AdvCM}
\end{align}
\end{subequations}

Equation \ref{eq:BiotSavart} is known as the Biot-Savart law in and can be obtained from $\div \vu = 0$ and the definition of the vector Laplacian:
\begin{gather}
\Laplace {\bm F} = \grad \left( \div {\bm F}  \right) - \curl \left(   \curl {\bm F} \right)
\end{gather}
for ${\bm F}$ a  $\R^2 \to \R^2$ vector field. In 2D, we let $\vu = {\bm F}$ and  commute $\PSolve$ and $\curl$ in \ref{eq:BiotSavart}. The velocity is then obtained from the stream function $\psi$:
\begin{gather} \label{eq:stream}
\psi = - \PSolve \omega, \quad \quad \vu = \curl \psi .
\end{gather}

The CM method for Euler equations then consists of numerically evolving $\vX$ in time using equation \eqref{eq:AdvCM}; the velocity and vorticity are defined using \eqref{eq:VortPB}, and \eqref{eq:stream}.

\subsection{Modified Equation} \label{sec:modEqn}

The coupling of the advection of the vorticity and the Biot-Savart law can be thought of as a feedback loop between the characteristic map and the discretized velocity. Let $\vhX^n$ be the numerical characteristic map at a discrete time step $t_n$. This generates a velocity $\vu^n$ which we use to evolve the map to the next time step. In this section, we look at the modified equation which arises from replacing the true velocity $\vu$ by some modified $\tilde{\vu}$ which is better approximated by the discrete $\vu^n$. We give some general error estimates between the true solution and the solution of the modified equation based on the discrepancies between $\vu$ and $\tilde{\vu}$.

For some given numerical solution $\vhX^n$, $n = 0, 1, 2, \ldots$, we look at a modified velocity $\tilde{\vu}$ defined at all times which approximates the velocities $\vu^n$ at discrete time steps $t_n$. The corresponding modified equation for the characteristic map is then
\begin{gather} \label{eq:modEqnChar}
\partial_t \tilde{\vX} + (\tilde{\vu} \cdot \grad ) \tilde{\vX} = 0.
\end{gather}
From here on, we will use the notation \emph{tilda} for variables associated to the modified equation. Discretized variables will be denoted by script letters with a superscript $n$ referring to the corresponding time step $t_n$.

We will estimate the difference between the true solution $\vX$ and the solution to the modified equation $\tilde{\vX}$. This is a useful strategy as it will allow us to bound the error of the numerical solution using $| \vX - \vhX| \leq | \vX - \tilde{\vX} | +  |\tilde{\vX} - \vhX|$.

Consider the evolution of $\vX$ and $\tilde{\vX}$ in some time interval $[t_0 , t]$ (for $t_0 < t$). We can write $\vX$ and $\tilde{\vX}$ in integral form:
\begin{subequations} \label{eqGroup:mapsIntegroDef}
\begin{align} 
\vX_{[t, t_0]} (\vx) = \vx + \int_{t}^{t_0} \vu \left( \vX_{[t, r]} (\vx), r \right) dr, \\
\tilde{\vX}_{[t, t_0]} (\vx) = \vx + \int_{t}^{t_0} \tilde{\vu} \left( \tilde{\vX}_{[t, r]} (\vx), r \right) dr . \label{eq:modMapIntegroDef}
\end{align}
\end{subequations}

Letting $z_\vx (t) = \vX_{[t, t_0]} (\vx) - \tilde{\vX}_{[t, t_0]} (\vx)$, we have that
\begin{gather}
| z_\vx (t) | \leq \int_{t_0}^{t} \left| \vu \left( \vX_{[t, r]} (\vx), r \right) -  \tilde{\vu} \left( \tilde{\vX}_{[t, r]} (\vx), r \right)  \right| dr \leq \int_{t_0}^t C dr + \int_{t_0}^t  A |z(r)| dr  
\end{gather}
for some $C \approx \| \vu - \tilde{\vu} \|_\infty $ and $A \approx \| \grad \vu \|_\infty $. Hence, by Gr\"onwall's lemma, we have that 
\begin{gather} \label{eq:GronwallModMap}
\vX_{[t, t_0]} (\vx) - \tilde{\vX}_{[t, t_0]} (\vx) \lesssim A  (t-t_0) e^{A (t-t_0)} \| \vu - \tilde{\vu} \|_\infty .
\end{gather}

Heuristically, we note from this that if a numerical solution $\vhX$ approximates well the solution $\tilde{\vX}$ of the modified equation generated by the discretized velocity field $\tilde{\vu}$ (or some approximation of it), that is, if the scheme is ``self-consistent'', then it is sufficient to control the difference between $\tilde{\vu}$ and the true velocity $\vu$. This tool makes the error analysis more straightforward and allows us to obtain better bounds on the error in the conservation of various advected quantities. In particular, we can write the modified equation approximated by the numerical vorticity solution:
\begin{gather} \label{eq:modVortEqn}
\partial_t \omega_0 (\tilde{\vX}) = \grad \omega_0 \cdot  \partial_t \tilde{\vX} = - \grad \omega_0 \cdot (\tilde{\vu} \cdot \grad ) \tilde{\vX} = - (\tilde{\vu} \cdot \grad ) \omega_0 (\tilde{\vX}) .
\end{gather}
That is, the numerical vorticity approximates an advection equation under the modified flow generated by $\tilde{\vu}$, making the error ``advective'' rather than diffusive. The exact nature of the modified equation is unclear and depends on the leading order term of $\vu - \tilde{\vu}$. In cases where spatially truncation errors dominate, we shall see that this is closely related to the Lagrangian-Averaged Euler equations.

\subsubsection{Multiscale Evolution} \label{sec:multiscale}
Due to the presence of different scales in the solution and to limited computational resources, we need to make choices on the degree of spatial truncation appropriate to each evolved quantity. In particular, the Biot-Savart law implies that $\vu$ has a faster decay in its Fourier coefficients compared to $\omega$ and $\vX$. Therefore, it can be represented on a coarser grid without incurring excessive $L^\infty$ error. In this method, we make a similar assumption as in Lagrangian-Averaged Euler (LAE-$\alpha$), that is, that the low frequency features of the velocity dictate the global evolution of the flow and high frequency small features do not need to be solved for exactly: it is sufficient to resolve the coarse scales of the velocity field as long as the fine scale features of the transported vorticity are not lost. With this assumption in mind, we design a method where the representation of the instantaneous velocity as well as the short time deformation map (by ``short time'', we refer to the submaps in the decomposition \eqref{eq:SubmapDecomp}) are done on a grid much coarser than the fine scales present in the vorticity solution. However, since the vorticity is defined as the pullback/rearrangement by $\vX$, all scales in the vorticity are preserved and coherently transported under the smoothed velocity field. Indeed, in the CM method, $\omega$ is defined as a function $\omega_0 \circ \vX$ over the entire domain. The absence of a grid-based discretization of $\omega$ means that we do not incur a spatial truncation error on the vorticity during its evolution. This lack of a grid-scale artificial viscosity means that the small scales are not lost. As a result, the CM method achieves arbitrary resolution on the vorticity field and allows us to separate the scales involved in the problems: the large scales are computed and accurately represented whereas small scales are preserved and passively transported.

Here we also make a parallel between the CM method and the LAE-$\alpha$ equations. The LAE-$\alpha$ equations aim at modelling the flow of an incompressible inviscid fluid on a spatial scale larger than $\alpha$ by taking a Lagrangian average of the velocity field. For more details on the LAE-$\alpha$ and LANS-$\alpha$ formulation, readers can refer to \cite{marsden2003anisotropic, mohseni2003numerical, marsden2001global}. In its vorticity form, the LAE-$\alpha$ can be written as
\begin{subequations} \label{eqGroup:LAE_vort}
\begin{align}
\partial_t \omega + (\vu \cdot \grad) \omega = 0 , \\
\vu = - \curl \PSolve (1 - \alpha \Laplace)^{-1} \omega .  \label{eq:BS_LAE} 
\end{align}
\end{subequations}
The above equation also models the flow of a second-grade non-Newtonian fluid; an analysis of the relation between second-grade non-Newtonian fluids, the vortex blob method \cite{chorin1973numerical} and the LAE-$\alpha$ equations can be found in \cite{oliver2001vortex}.

We see from \eqref{eqGroup:LAE_vort} that the vorticity is transported by $\vu$ and that $\vu$ is obtained from the Biot-Savart law on a smoothed vorticity field $(1 - \alpha \Laplace)^{-1} \omega$. This smoothing effect can also be achieved by a spatial filter during the same Biot-Savart computation in CM. Therefore, a coarse grid representation of the velocity field can in effect result in an approximation of the averaged velocity in the LAE-$\alpha$ equations. The difference is that the quantity of interest in LAE-$\alpha$ is the averaged velocity whereas in CM, we are interested in the vorticity field which contains arbitrarily fine scales. It remains that in cases where the sampling of the velocity corresponds to the $(1- \alpha \Laplace)^{-1}$ smoothing in LAE-$\alpha$, the characteristic map from both formulations are the same and hence generate the same dynamics.

\section{Numerical Implementation} \label{sec:NumMethod}
We present in this section the numerical framework for the CM method. First, we present the Hermite interpolation structure used for spatial discretization as well as the spatial definition of the discretized velocity field. We then provide a time semi-discretization of the equation and make the link to the modified equation in \ref{sec:modEqn}. The space and time discretizations are combined to generate the CM solver for the Euler equations for which we then give some error estimates. Finally, we describe the remapping method which aims to adaptively resolve increasingly complicated spatial features.

\subsection{Spatial Discretization}
We review in this section the definition of Hermite cubic interpolants which is at the heart of all spatial computations in this method. We then present the spatial discretization of the velocity field.

\subsubsection{Hermite Cubic Interpolation} \label{subsec:HermiteInterp}
We assume that the computational domain $\Omega$ is discretized in a rectangular meshgrid $\gG$. In 2D, $\gG$ consists of grid points $\vx_{i,j}$ and has cells $C_{i,j}$ with corners $\vx_{i,j}, \vx_{i+1,j}, \vx_{i, j+1}$ and $\vx_{i+1, j+1}$. We will from now on assume for simplicity that $\Omega$ is a flat torus and $\gG$ is a square grid with $C_{i,j}$ having uniform width $\incr x$.

The space of Hermite cubic functions on $\gG$, $\fV_\gG$, is a finite dimensional subspace of $\fC^1 (\Omega)$ consisting of functions that are bicubic in each $C_{i,j}$. To construct the basis functions, we take a tensor product of the 1D cubic basis (see figure \ref{fig:HCBasis}):
\begin{subequations} \label{eqGroup:1DHBasis}
	\begin{align} 
	& Q_0(x) = 	(1+2 |x|)(1-|x|)^2 & x \in [-1, 1] , \\
	& Q_1(x) = x (1- |x|)^2 & x \in [-1, 1] , \\
	& Q_0(x) = Q_1(x) = 0 & x \notin[-1, 1]  .
	\end{align}
\end{subequations}
These functions have the property that $\partial^c Q_k |_y= \delta^c_k \delta^0_y$ for $c, k \in \{0,1\}$ and $y \in \{-1, 0, 1\}$ and are cubic in $[-1, 0]$ and $[0, 1]$ and are everywhere continuously differentiable.
\begin{figure}[h]
\centering
\includegraphics[width = 0.45\textwidth]{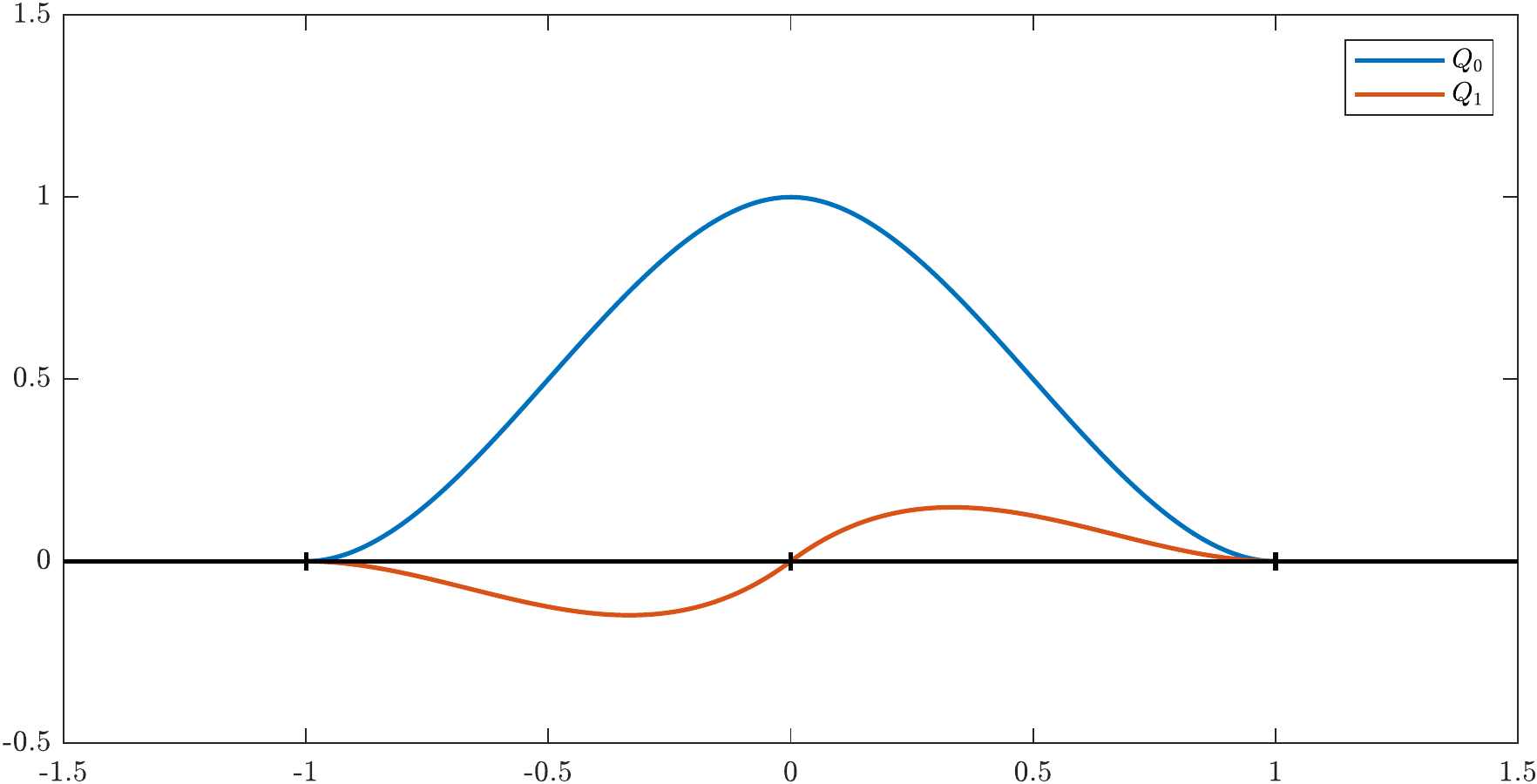}
\caption{1D Hermite cubic basis functions.}
\label{fig:HCBasis}
\end{figure}

We define the 2D basis functions on the grid $\gG$ for $\vx = (x^1, x^2)$:
\begin{gather}
H_{k,l}^{i,j} (\vx) = Q_k \left( \frac{x^1 - x^1_{i,j}}{\incr x} \right)  Q_l \left( \frac{x^2 - x^2_{i,j}}{\incr x} \right) \incr x^{k+l} .
\end{gather}
These basis functions satisfy
\begin{gather}
\partial^{(c,d)} H^{i,j}_{k,l} (\vx_{a, b}) = \delta^a_i \delta^b_j \delta^c_k \delta^d_l .
\end{gather}
Each $H_{k,l}^{i,j}$ is supported on 4 cells $C_{i-1, j-1}, C_{i, j-1}, C_{i-1, j}$ and $C_{i,j}$. It is bicubic in each cell, everywhere $\fC^1$ with continuous mixed derivative $\partial^{(1,1)}$. It has continuous normal derivatives and $\fC^\infty$ tangential derivatives on cell boundaries.

The projection operator $\He_\gG :\fC^1 (\Omega) \to \fV_\gG$ is the interpolation operator
\begin{align}
& \He_\gG [f](\vx)  =  \sum_{i,j} \sum_{ k,l }  f_{i,j}^{k,l} H^{i,j}_{k,l} (\vx) ,  \\
& \text{where} \: f_{i,j}^{k,l} = \partial^{(k,l)} f (\vx_{i,j}) \: \text{ for } f \in \fC^1 (\Omega) . \nonumber
\end{align}

It is well known that for $f \in \fC^4(\Omega)$ the interpolation is order 4:
\begin{gather}
\left\| f - \He_\gG [f]   \right\|_\infty  < c \| D^4 f \|_\infty \incr x^4 ,
\end{gather}
for some constant $c$

Furthermore, we see that if the grid data $f^{k,l}_{i,j}$ is perturbed by $\epsilon$, then the perturbation of the interpolant is
\begin{gather}
\left\| \He_\gG [f] - \He_\gG [f_\epsilon]  \right\|_\infty  = \bigO( \epsilon \incr x^{k+l} ) .
\end{gather}

Therefore, in order to obtain a $4^{th}$ order accurate interpolant of a function $f$, it is sufficient to know the $(k,l)$ derivative of $f$ to order $\bigO( \incr x^{4 - (k+l)} ) $.

\subsubsection{Spatial representation of the velocity field} \label{subsec:VeloDiscr}

In 2D Euler, the vorticity field solves a transport equation under the velocity $\vu$. Assuming we have the global time characteristic map, $\vX (\cdot, t)$, and that the initial condition for the vorticity $\omega_0$ is given analytically, we have that the vorticity at time $t$ is given by 
\begin{gather}
\omega(\vx, t) = \omega_0 (\vX(\vx, t)).
\end{gather}

This evaluation is necessary at every time step for the Biot-Savart law. Numerically, the spatial resolution is limited by the grid's Nyquist frequency resulting in artificial diffusion when defining the velocity $\vu$. Instead of allowing for artificial diffusion by coarse grid sampling (undersampling), we define
\begin{equation} \label{eq:mollVelo}
\vu_\epsilon = \eta_\epsilon * \vu ,
\end{equation}
where $\eta_\epsilon$ is a mollifier supported in a ball of radius $\epsilon$.

Commuting mollification with derivatives, we have that
\begin{subequations} \label{eqGroup:mollBiotSavart}
\begin{align} 
\vu_\epsilon = \curl \psi_\epsilon \quad \text{ for } \: \psi_\epsilon = \eta_\epsilon * \psi , \\
\psi_\epsilon = - \PSolve \omega_\epsilon  \quad \text{ for } \: \omega_\epsilon = \eta_\epsilon * \omega .
\end{align}
\end{subequations}

Finally, using $\omega = \omega_0 \circ \vX$, we can write the pullback of the convolution
\begin{gather} 
\omega_\epsilon(\vx, t_n) = \int_{B_\epsilon(\vx)} \eta_\epsilon(\vx - \vy) \omega_0 \left( \vX(\vy, t_n)  \right) d \vy \\  \nonumber
 = \int_{\vX(B_\epsilon(\vx), t_n)} \eta_\epsilon(\vx - \vX^{-1}(\vy, t_n) ) \omega_0 ( \vy ) | \det (\grad \vX )|^{-1} d \vy \\ \nonumber
 = \int_{\vX(B_\epsilon(\vx), t_n)} \tilde{\eta}_\epsilon(\vX(\vx, t_n), \vy ) \omega_0 ( \vy ) d \vy = \left[  \tilde{\eta}_\epsilon * \omega_0  \right]_{\vX(\vx, t_n)} , \nonumber
\end{gather}
where $\tilde{\eta}_\epsilon (\vx, \vy) = \eta_\epsilon (\vX^{-1}(\vx, t_n) - \vX^{-1}(\vy, t_n))| \det (\grad \vX(\vx, t_n)  )|^{-1}$. The last line above can be verified by plugging in $\vx = \vX^{-1}(\cdot, t_n)$.

During simulations, when sampling the vorticity at discrete locations $\vx_i$, we pick a mollifier such that $\eta_{\epsilon, \vx_i}$ form a partition of unity of the domain. This further guarantees that the numerical average vorticity is equal to the true exact average up to quadrature and Jacobian determinant errors. Given a numerical approximation $\vhX^n$ of $\vX(\cdot, t_n)$, we define the mollified numerical vorticity at time step $t_n$ to be
\begin{gather} \label{eq:pbConvVort}
\omega^n_\epsilon (\vx) := \left[  \eta^n_\epsilon * \omega_0  \right]_{\vhX^n(\vx)} ,
\end{gather}
for $\eta^n_\epsilon (\vx, \vy) = \eta_\epsilon ((\vhX^n)^{-1}(\vx) - (\vhX^n)^{-1}(\vy))| \det (\grad \vhX^n(\vx)  )|^{-1}$. Note that the numerical evaluation of $\omega^n_\epsilon$ is performed by quadrature on the deformed cells $\vhX^n (C_{ij})$ and requires no computations of the inverse or Jacobian determinant of $\vhX^n$. 

\begin{remark}
We only use this mollified vorticity to generate a mollified velocity, the true numerical solution is still defined as $\omega^n = \omega_0 \circ \vhX^n$.
\end{remark}

The velocity field can be obtained from $\omega^n$ through the stream function as in equation \ref{eq:stream}. Numerically we will solve this using a spectral method with Fast Fourier transforms. Let $\Ft$ denote the Discrete Fourier transform operation, we have:
\begin{gather} \label{eq:DiscrStream}
\hstro_\epsilon := - \Ft^{-1} \left[  \PSolve \Ft \left[  \omega^n_\epsilon \right]  \right] ,
\end{gather}
where the above $\PSolve$ solves the Poisson equation in Fourier space and is a diagonal operator.

The divergence-free property of the velocity should be preserved in order for the characteristic map to be volume preserving. Therefore $\vu^n$ should be defined as the curl of some scalar function. To achieve this, we use a grid $\gV$ for the representation of the velocity field and define
\begin{gather} \label{eq:DiscrVelo}
\vu^n_\epsilon := \curl \He_\gV [ \hstro_\epsilon ] . 
\end{gather}

This definition guarantees that $\vu^n_\epsilon \in \text{curl} (\fV_\gV) \subset \{ f \in \fC^0 (\Omega) \: | \: \div f \equiv 0  \}$. Indeed, $\vu^n$ is $C^\infty$ in each cell and continuous across cell boundaries, its divergence is however continuous everywhere and identically 0 due to the continuity of the mixed partials $\partial^{(1,1)} \He_\gV [ \hstro_\epsilon ]$. As a result, the numerical flow is also divergence-free which allows us to control the error on the volume-preserving property of the characteristic map.

\begin{remark}
The grid $\gV$ and the parameter $\epsilon$ used to represent the velocity field are independent of the grid used for the evolution of $\vhX$, the interpolant is also not restricted to Hermite cubics. Consequently, $\gV$ can be made arbitrarily fine and $\epsilon$ arbitrarily small such that $\vu^n_\epsilon$ approach the exact Biot-Savart velocity associated to the numerical vorticity $\omega^n = \omega_0 \circ \vhX$. In fact, choosing a fine grid does not require large computational resource as it only involves an inverse FFT of a zero-padded $\hat{\psi}^n_\epsilon$. On the other hand, it may be computationally advantageous to pick a larger $\epsilon$ as this allows us to perform a coarser sampling of $\omega^n$, a procedure which involves evaluating all the submaps in the characteristic map decomposition.
\end{remark}

\subsection{Time Discretization}

Section \ref{subsec:VeloDiscr} gives us a discretization of the velocity field at time $t_n$ given the characteristic map. In this section, we provide the time discretization for the evolution of the characteristic map based on this $\vu^n_\epsilon$.

The characteristic map can be evolved using the semigroup property \eqref{eq:semigroupInfinitess}. Let $t_n$ be the discrete time steps, with $\incr{t} = t_{n+1} -t_n$, we have
\begin{gather} \label{eq:semigroupStep}
\vX_{[t_{n+1}, 0]} = \vX_{[t_n, 0]} \circ \vX_{[t_{n+1}, t_n]}.
\end{gather}

In order to approximate the one-step map $\vX_{[t_{n+1}, t_n]}$, we extend the velocity $\vu^n_\epsilon$ to the time interval $[t_n, t_{n+1}]$ using an order $p$ Lagrange polynomial in time. This is similar to a multistep method. Let
\begin{gather} \label{eq:defModVelo}
\tilde{\vu}(\vx, t) := \sum_{i=0}^{p-1} l_i(t) \vu^{n-i}_\epsilon(\vx),
\end{gather}
where $l_i$ are the Lagrange basis polynomials for time steps $t_{n-i}$. We note that for each fixed time $t$, $\tilde{\vu}$ is a linear combination of $\vu^{n-i}_\epsilon$. In particular, this implies that if $\div \vu^{n-i}_\epsilon =0$ then $\div \tilde{\vu} =0$.

The one-step map $\vX_{[t_{n+1}, t_n]}$ is then approximated from \eqref{eq:modMapIntegroDef} by a $k$-stage Runge-Kutta integration of the velocity $\tilde{\vu}$ along characteristic curves, for $\incr{t}$ backward in time:
\begin{gather} \label{eq:oneStepMapNum}
\vhX_{[t_{n+1}, t_n]}(\vx) = \vx - \incr{t} \sum_{j=1}^s b_j k_j, 
\end{gather}
with 
\begin{gather} 
k_j = \tilde{\vu} \left(\vx - \incr{t} \sum_{m=1}^{j-1} a_{jm} k_m, t_{n+1} - c_j \incr{t} \right),
\end{gather}
and $k_0 := 0$. The coefficients $a, b, c$ are those of the Butcher tableau corresponding to the explicit Runge-Kutta scheme.

\begin{remark} \label{rmk:smoothVelo}
The smoothness of $\tilde{\vu}$ is required for the convergence of RK schemes. However, the Hermite cubic definition of $\vu^n_\epsilon$ in section \ref{subsec:VeloDiscr} is $\fC^0$. Nonetheless, this does not pose a problem since $\He_\gV [ \hstro_\epsilon ]$ is a piecewise polynomial approximation of $\hstro_\epsilon$, which is smooth: the moduli of smoothness for $\vu^n_\epsilon$ scale with the cell width of $\gV$ (in the limit of infinitely fine grid $\gV$, $\vu^n_\epsilon$ is smooth). Therefore, by appropriately scaling $\gV$ with $\incr{t}$ the lack of smoothness does not affect the convergence.
\end{remark}

\subsection{Characteristic Mapping Method for 2D Incompressible Euler}
We combine the time and space discretization in the previous sections to generate the Characteristic Mapping method for 2D Euler. The evolution of $\vX_{[t, 0]}$ at discrete time steps $t_n$ is given by \eqref{eq:semigroupStep}. We construct the numerical approximation of $\vX$ by evolving a map in the space of Hermite cubic $C^1$ diffeomorphisms in the sense that each coordinate function is a piecewise Hermite cubic polynomial defined on some grid $\gM$. We use the same evolve-project strategy as the CM method for linear advection to advance the characteristic map \cite{CM}:
\begin{align} \label{eq:semigroupStepNum}
\vhX^{n+1} = \He_\gM \left[ \vhX^n \circ \vhX_{[t_{n+1}, t_n]} \right], \\
\vhX^0 (\vx)  = \vx . 
\end{align}
where the one step map $\vhX_{[t_{n+1}, t_n]}$ is given in \eqref{eq:defModVelo} and \eqref{eq:oneStepMapNum}. The velocities $\vu^n_\epsilon$ required to define the one step map are given by \eqref{eq:pbConvVort}, \eqref{eq:DiscrStream} and \eqref{eq:DiscrVelo}. The velocities $\vu^{n-i}_\epsilon$ at previous time steps used in \eqref{eq:defModVelo} are stored until no longer needed (for $p$ steps where $p$ is the order of the Lagrange polynomial chose to represent $\tilde{\vu}$).

\begin{figure}[h]
\captionsetup[subfigure]{justification=centering}
\centering
\begin{subfigure}[h]{0.25\linewidth}
\centering
\includegraphics[width = \linewidth]{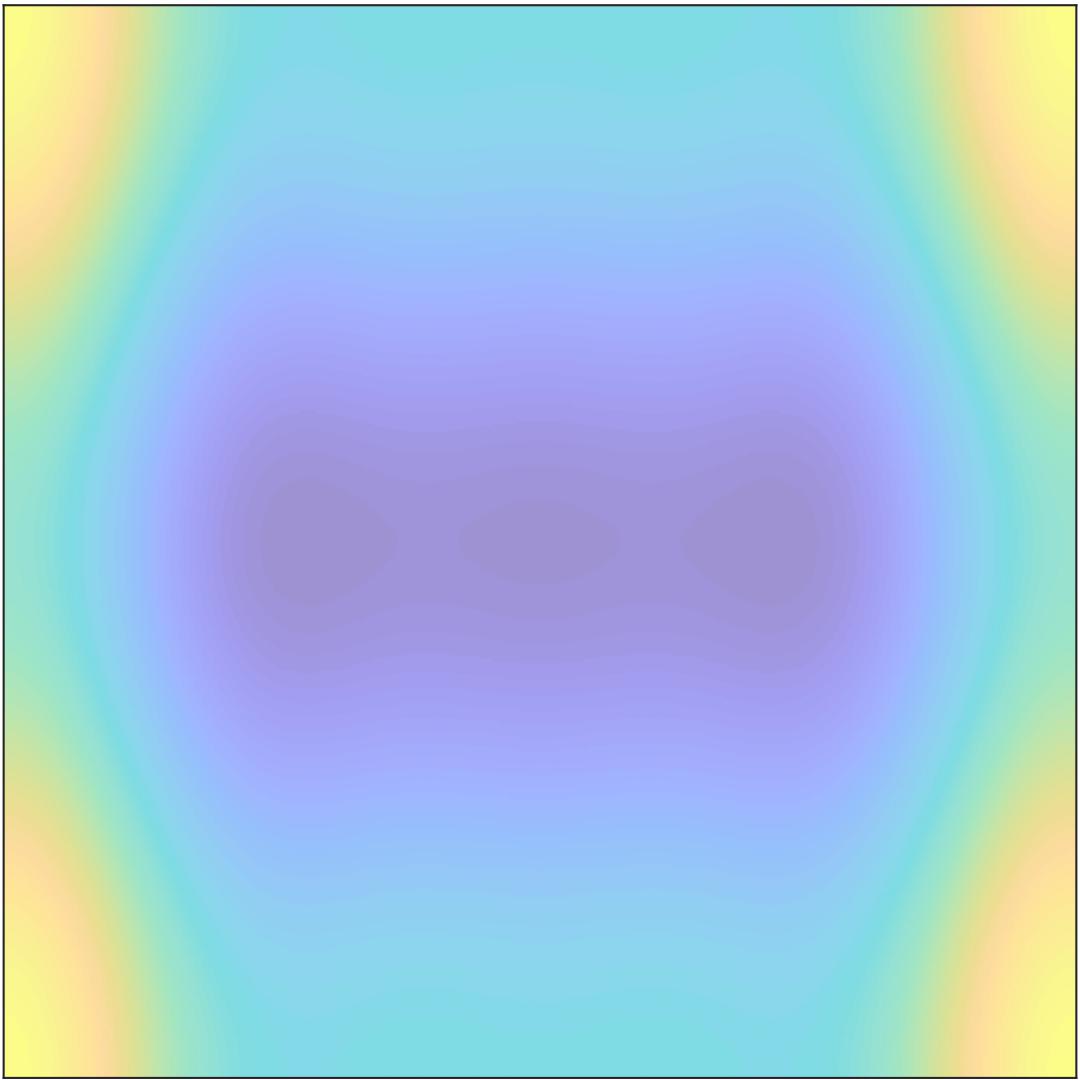}
\caption{Initial condition. \newline}
\label{subfig:wT0}
\end{subfigure}
\begin{subfigure}[h]{0.25\linewidth}
\centering
\includegraphics[width = \linewidth]{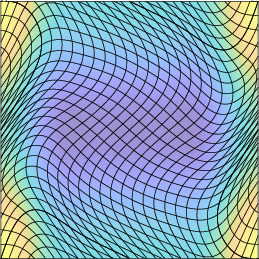}
\caption{Grid deformation from characteristic map.}
\label{subfig:mapT1}
\end{subfigure}
\begin{subfigure}[h]{0.25\linewidth}
\centering
\includegraphics[width = \linewidth]{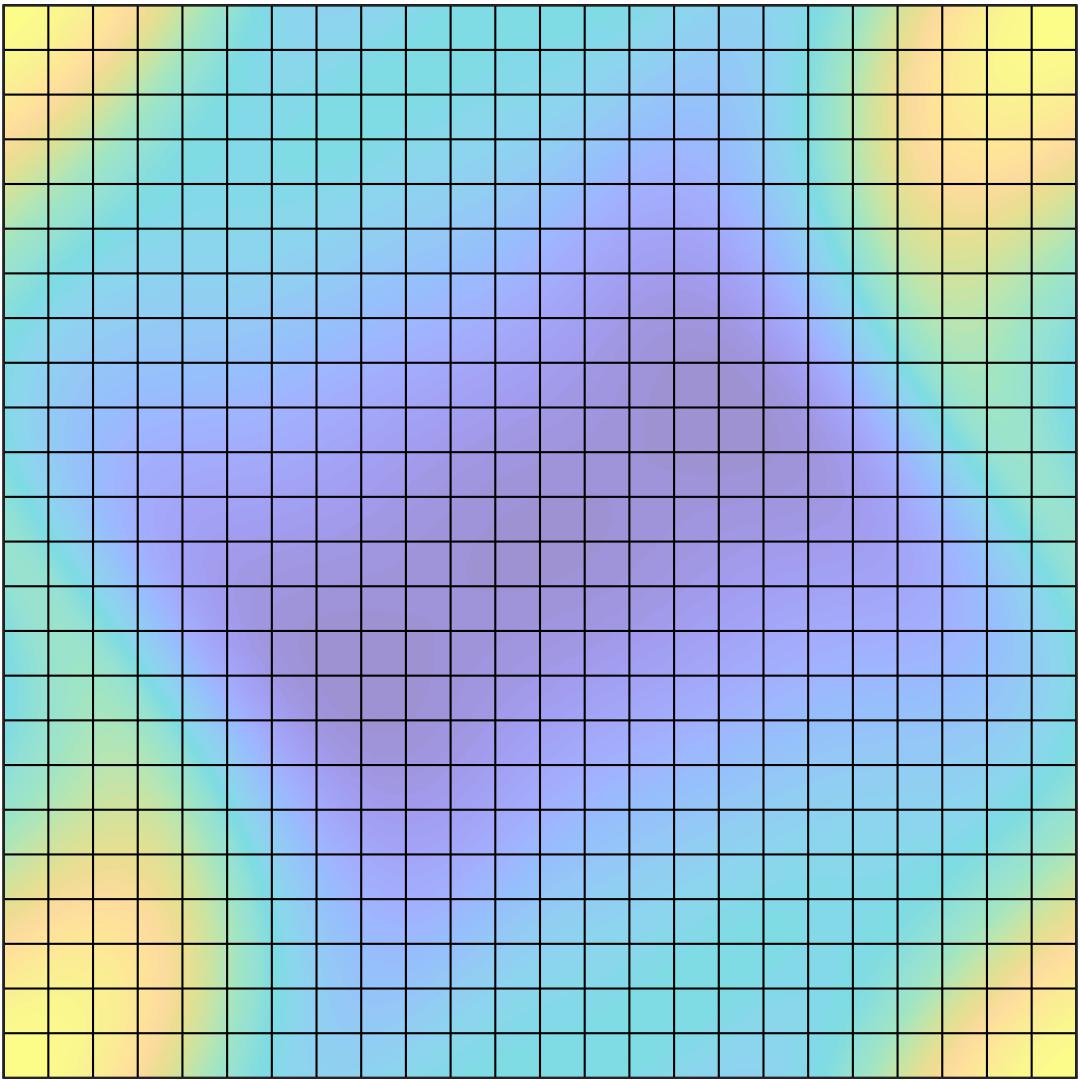}
\caption{Pullback of the initial condition by the map.}
\label{subfig:wmT1}
\end{subfigure}
\caption{Evolution of the characteristic map acting on the initial condition.}
\label{fig:mapExample}
\end{figure}

\subsubsection{Error Estimates} \label{subsec:errors}

We give some estimates on the characteristic map error: 
\begin{gather}  \label{eq:defCharError2}
\mathcal{E}^n := \| \vX ( \vx,t_n)  - \vhX^n (\vx) \|_\infty .
\end{gather}

First, we consider some given numerical solutions $\vhX^n$ for $n = 0, 1, \ldots N$ for some unspecified $N$. The characteristic map at each time step $t_n$ generates a velocity field $\vu^n_\epsilon$. We take the velocity field $\tilde{\vu}(\vx, t)$ for $t \geq 0$ to be the velocity field of the modified equation \eqref{eq:modEqnChar}. We notice that the numerical solution $\vhX$ is exactly the CM discretization of the advection operator $\tilde{\vX}$ for the velocity $\tilde{\vu}$ (taking $\tilde{\vu}$ as given). That is $\vhX$ is a CM method approximation of the advection operator generated by the modified velocity it engenders.

\begin{theorem} \label{thm:CMmodError}
Using an $s$-stage explicit RK integrator with Hermite cubic spatial interpolation, the numerical characteristic map $\vhX^n(\vx)$ approximates $\tilde{\vX}(\vx, t_n)$ to order 
\begin{gather} \label{eq:defFullError}
\| D^{\alpha} (\tilde{\vX}(\vx, t_n) - \vhX^n(\vx)) \|_\infty = \bigO ( t_n ( \incr{x}^{2-|\alpha|} \min( \incr{t}, \incr{x}^2 \incr{t}^{-1} ) + \incr{t}^s )) ,
\end{gather}
for $\alpha \in \{0, 1\}^2$.
\end{theorem}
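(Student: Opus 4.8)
The plan is to treat this as a linear-advection error estimate. Because $\tilde{\vu}$ is regarded as given---it is by construction the velocity of the modified equation \eqref{eq:modEqnChar} that the discrete velocities $\vu^n_\epsilon$ generate---the iteration \eqref{eq:semigroupStepNum} is exactly the evolve-and-project CM discretization of the advection operator $\tilde{\vX}$ for the fixed velocity field $\tilde{\vu}$. I would therefore follow the analysis of the CM method for linear advection in \cite{CM}, adapted to track the spatial derivative $D^\alpha$ for $\alpha \in \{0,1\}^2$. The backbone is a one-step error recursion for $\mathcal{E}^n_\alpha := \| D^\alpha(\tilde{\vX}(\cdot, t_n) - \vhX^n) \|_\infty$ obtained by comparing the exact semigroup step $\tilde{\vX}^{n+1} = \tilde{\vX}^n \circ \tilde{\vX}_{[t_{n+1}, t_n]}$ with \eqref{eq:semigroupStepNum}. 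First I would split the one-step discrepancy into three contributions: (i) the Runge--Kutta consistency error in the one-step map $\vhX_{[t_{n+1}, t_n]} - \tilde{\vX}_{[t_{n+1}, t_n]}$; (ii) the Hermite projection error $\He_\gM[\vhX^n \circ \vhX_{[t_{n+1}, t_n]}] - \vhX^n \circ \vhX_{[t_{n+1}, t_n]}$; and (iii) the error propagated from the previous step, $\tilde{\vX}^n \circ \tilde{\vX}_{[t_{n+1}, t_n]} - \vhX^n \circ \vhX_{[t_{n+1}, t_n]}$ modulo (i)--(ii).

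For (iii), composing on the right with a near-identity map and applying the chain rule in the $|\alpha| = 1$ case shows that the previous error is amplified by a factor $1 + \bigO(\incr{t})$ governed by $\| \grad \tilde{\vu} \|_\infty$; iterating over $N = t_n / \incr{t}$ steps and invoking a discrete Gr\"onwall argument---as in the integral estimate \eqref{eq:modMapIntegroDef}---produces the overall prefactor $t_n$ (the exponential amplification $e^{\|\grad \tilde{\vu}\|_\infty t_n}$ being absorbed into the constant over the fixed interval $[0, t_n]$). For (i), the local truncation error of an $s$-stage explicit RK integrator applied to the sufficiently smooth $\tilde{\vu}$ is $\bigO(\incr{t}^{s+1})$, and the same order persists after differentiating the RK increment \eqref{eq:oneStepMapNum} in space; summed over the $N$ steps with the Gr\"onwall factor this yields the $t_n \incr{t}^s$ term.

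The remaining term (ii) produces the $\incr{x}^{2-|\alpha|} \min(\incr{t}, \incr{x}^2 \incr{t}^{-1})$ factor and is where the main work lies. Since $\vhX^n$ already lies in the Hermite cubic space, $\He_\gM \vhX^n = \vhX^n$, so by linearity the projection error equals that of the one-step increment $g := \vhX^n \circ \vhX_{[t_{n+1}, t_n]} - \vhX^n$, which by \eqref{eq:oneStepMapNum} is $\bigO(\incr{t})$ because $\vhX_{[t_{n+1}, t_n]} = \mathrm{id} + \bigO(\incr{t})$. I would bound $\| D^\alpha(\He_\gM g - g) \|_\infty$ in two complementary ways: (a) the generic fourth-order Hermite estimate $\bigO(\incr{x}^{4-|\alpha|} \| D^4 g \|_\infty)$, in which $\| D^4 g \|_\infty = \bigO(1)$ is kept uniformly bounded using the submap gradient decomposition \eqref{eq:SubmapDecompGrad}; and (b) a lower-order estimate $\bigO(\incr{x}^{2-|\alpha|} \incr{t}^2)$ that trades two powers of $\incr{x}$ for the smallness of the increment, using that $\He_\gM$ reproduces both values and gradients at grid nodes so the remainder sees only the second-order part of $g$. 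Per step the error is the smaller of $\incr{x}^{4-|\alpha|}$ and $\incr{x}^{2-|\alpha|} \incr{t}^2$; multiplying by $N = t_n / \incr{t}$ gives $t_n \incr{x}^{2-|\alpha|} \min(\incr{x}^2 \incr{t}^{-1}, \incr{t})$, matching the claim. The principal obstacle is establishing bound (b) rigorously: the one-step map inherits only the limited spatial smoothness of $\tilde{\vu}$---which is merely $\fC^0$ as the curl of a $C^1$ interpolant---so its high spatial derivatives are not uniformly controlled, and one must invoke Remark \ref{rmk:smoothVelo} (scaling the velocity grid $\gV$ with $\incr{t}$) to ensure this lack of smoothness does not degrade the order, while simultaneously using \eqref{eq:SubmapDecompGrad} to keep the growing derivatives of the accumulated map $\vhX^n$ from spoiling the constants.
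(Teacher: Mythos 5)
Your proposal takes essentially the same approach as the paper: the paper's entire proof is the observation that, with $\tilde{\vu}$ regarded as a fixed given velocity, $\vhX^n$ is exactly the CM/jet-scheme discretization of linear advection, so the estimate follows from the error analysis of \cite{CM} (plus remarks on the mixed-derivative order loss and on $\tilde{\vu}$ being only piecewise smooth in time). Your reconstruction of the internals of that cited analysis---the one-step recursion with Gr\"onwall giving the $t_n$ prefactor, the RK local error giving $t_n \incr{t}^s$, and the two complementary Hermite interpolation bounds whose minimum gives the $\incr{x}^{2-|\alpha|}\min(\incr{t}, \incr{x}^2\incr{t}^{-1})$ term---is consistent with what the paper delegates to the reference and with its own foot-point discussion in the convergence-test section.
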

\begin{proof}
Taking $\tilde{\vu}$ as a fixed velocity, $\vhX$ is simply CM method applied to $\tilde{\vu}$. The error estimates are given in \cite{CM}. It is a property of jet-schemes with Hermite cubic interpolants that we lose one order of convergence for the first mixed derivative only in the spatial error term. This is because time integration in jet-schemes computes the function values and mixed derivatives of degree 1 in each dimension and all interpolants and functions evaluated in the method are at least everywhere $C^1$.

We note that the velocity field $\tilde{\vu}$ is smooth in space (see remark \ref{rmk:smoothVelo}), however it may be discontinuous in time at $t_n$. This does not cause an issue as the smoothness of the velocity is only required in the time step intervals $[t_n , t_{n+1}]$. The local truncation error estimates still hold for the one-step maps $\vhX_{[t_{n+1}, t_n]}$ and $\tilde{\vX}_{[t_{n+1}, t_n]}$; the global truncation error can be obtained from the composition and Hermite interpolation of the one-step maps.
\end{proof}

\begin{corollary} \label{cor:enstrophy}
The CM method for 2D incompressible Euler conserves enstrophy to order $\bigO ( t_n ( \incr{x}^{2} \min( \incr{t}, \incr{x}^2 \incr{t}^{-1} ) + \incr{t}^s ))$.
\end{corollary}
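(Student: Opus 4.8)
The plan is to reduce the enstrophy error to the pointwise map error already controlled in Theorem \ref{thm:CMmodError}, the key being that although the numerical map is only approximately area-preserving, the \emph{modified} map $\tilde{\vX}$ preserves area exactly and therefore conserves enstrophy exactly. Write the enstrophy as $\mathcal{Z}(t) = \frac{1}{2}\int_U \omega(\vx,t)^2\,d\vx$. For the exact solution $\omega(\cdot,t_n) = \omega_0\circ\vX(\cdot,t_n)$, and since $\vX(\cdot,t_n)$ is the inverse of the volume-preserving forward flow of the divergence-free field $\vu$, it is area-preserving; the change of variables $\vy = \vX(\vx,t_n)$, whose Jacobian determinant is $1$, then gives $\mathcal{Z}(t_n) = \frac{1}{2}\int_U \omega_0(\vy)^2\,d\vy = \mathcal{Z}(0)$, i.e. exact enstrophy conservation along the true flow.

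The central observation is that the same holds for the modified equation. The modified velocity $\tilde{\vu}$ is a linear combination of the divergence-free fields $\vu^{n-i}_\epsilon$ and is hence itself divergence-free, so its backward characteristic map $\tilde{\vX}(\cdot,t_n)$ is area-preserving and $\frac{1}{2}\int_U \omega_0(\tilde{\vX}(\vx,t_n))^2\,d\vx = \mathcal{Z}(0)$ as well. By contrast, $\vhX^n$ is only \emph{approximately} area-preserving, because both the Runge--Kutta stepping and the Hermite projection $\He_\gM$ perturb its Jacobian determinant, so the numerical enstrophy $\mathcal{Z}^n = \frac{1}{2}\int_U \omega_0(\vhX^n(\vx))^2\,d\vx$ need not equal $\mathcal{Z}(0)$. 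Consequently the entire enstrophy error collapses onto a comparison of $\vhX^n$ with $\tilde{\vX}(\cdot,t_n)$:
\[
|\mathcal{Z}^n - \mathcal{Z}(0)| = \frac{1}{2}\left| \int_U \left[ \omega_0(\vhX^n(\vx))^2 - \omega_0(\tilde{\vX}(\vx,t_n))^2 \right] d\vx \right|.
\]
Factoring the difference of squares, bounding the sum factor by $2\|\omega_0\|_\infty$ and the difference factor by $\|\grad\omega_0\|_\infty\,\|\vhX^n - \tilde{\vX}(\cdot,t_n)\|_\infty$ via the mean value theorem, yields $|\mathcal{Z}^n - \mathcal{Z}(0)| \lesssim \|\omega_0\|_\infty \|\grad\omega_0\|_\infty |U|\,\|\vhX^n - \tilde{\vX}(\cdot,t_n)\|_\infty$. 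Applying Theorem \ref{thm:CMmodError} with $\alpha=0$ then bounds $\|\vhX^n - \tilde{\vX}(\cdot,t_n)\|_\infty$ by $\bigO(t_n(\incr{x}^2\min(\incr{t},\incr{x}^2\incr{t}^{-1}) + \incr{t}^s))$, which is exactly the claimed rate.

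The conceptual heart of the argument, and its only real subtlety, is the second step: the modified flow being exactly divergence-free means that spatial-truncation and time-stepping errors never enter as a systematic drift in enstrophy, but only through the pointwise map error that has already been estimated. The one technical point to respect is that the relevant estimate is the $|\alpha|=0$ (function-value) bound rather than the gradient bound, since it is the function-value rate that carries the full $\incr{x}^2$ and matches the corollary; in particular the proof requires neither a direct estimate of $\grad\vhX^n$ nor of the Jacobian determinant of the numerical map. The only standing hypothesis is the Lipschitz (indeed $C^1$) regularity of $\omega_0$ used in the mean value step, which is already available under the paper's smoothness assumptions.
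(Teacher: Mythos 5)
Your proof is correct and follows essentially the same route as the paper: both arguments hinge on the fact that $\tilde{\vu}$ is divergence-free, so the modified map $\tilde{\vX}$ is exactly volume preserving and conserves enstrophy exactly, which collapses the error onto $\|\vhX^n - \tilde{\vX}(\cdot,t_n)\|_\infty$ bounded by Theorem \ref{thm:CMmodError}. The only cosmetic difference is that you estimate the difference of squares with the mean value theorem in $L^\infty$, whereas the paper linearizes $f\circ\omega_0$ for a general function $f$ and applies Cauchy--Schwarz in $L^2$, thereby also obtaining conservation of all higher vorticity moments.
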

\begin{proof}
Since $\tilde{\vu}$ is by definition divergence-free, we have that $\tilde{\vX}$ is a volume preserving map, i.e. $\det (\grad \tilde{\vX}) = 1$. We get by a change of variable that, with $U = \tilde{\vX}(U)$, 
\begin{gather}
\int_{U} f(\omega_0(\vx))) dx = \int_{\tilde{\vX}(U)} f( \omega_0 (\tilde{\vX}(\vx, t_n))) \det (\grad \tilde{\vX}(\vx, t_n)) dx = \int_{U} f( \omega_0 (\tilde{\vX}(\vx, t_n))) dx ,
\end{gather}
for any measurable $f$. Therefore, we have
\begin{gather}
\int_U  f( \omega_0(\vhX^n(\vx))) dx  - \int_U f(\omega_0(\vx))) dx \approx \int_{U} \grad( f \circ \omega_0) ( \vhX^n(\vx) - \tilde{\vX}(\vx, t_n) ) dx  \\ \leq \| \grad (f \circ \omega_0 ) \|_{L^2} \|  \vhX^n - \tilde{\vX}(\cdot, t_n) \|_{L^2} = \bigO ( t_n ( \incr{x}^{2} \min( \incr{t}, \incr{x}^2 \incr{t}^{-1} ) + \incr{t}^s )) \nonumber
\end{gather}
In particular, taking $f(\omega) = \omega^2$ gives us conservation of enstrophy, and for higher order monomials, implies that the moments of the vorticity are conserved, as they are in the continuous setting.
\end{proof}

To obtain a full error bound, it is sufficient to bound the difference between the true characteristic map and the map from the modified equation. Let 
\begin{gather} \label{eq:defModError}
\tilde{\mathcal{E}}^n := \| \vX(\vx, t_n) - \tilde{\vX}(\vx, t_n) \|_\infty.
\end{gather}
From theorem \ref{thm:CMmodError}, we then have that
\begin{gather} \label{eq:fullErrorDecomp}
\mathcal{E}^n \leq \tilde{\mathcal{E}}^n + \bigO ( t_n ( \incr{x}^{2} \min( \incr{t}, \incr{x}^2 \incr{t}^{-1} ) + \incr{t}^s )) .
\end{gather}

\begin{theorem}
From the above error decomposition we can deduce that the global truncation error for the characteristic map is
\begin{gather} \label{eq:fullCharError}
\mathcal{E}^n = \bigO \left(  \incr{t}^s + \incr{x}^{2} \min( \incr{t}, \incr{x}^2 \incr{t}^{-1} )  +  \incr{t}^p  \right) .
\end{gather}
\end{theorem}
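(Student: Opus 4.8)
The plan is to start from the error decomposition \eqref{eq:fullErrorDecomp}, which already isolates the purely discretization-driven part of the error. By Theorem \ref{thm:CMmodError} this part is $\bigO(t_n(\incr{x}^2\min(\incr{t}, \incr{x}^2\incr{t}^{-1}) + \incr{t}^s))$, and absorbing the bounded final time $t_n \leq T$ into the constant supplies exactly the $\incr{t}^s$ and $\incr{x}^2\min(\incr{t}, \incr{x}^2\incr{t}^{-1})$ terms appearing in \eqref{eq:fullCharError}. It then remains only to bound $\tilde{\mathcal{E}}^n = \|\vX(\cdot, t_n) - \tilde{\vX}(\cdot, t_n)\|_\infty$ and to show that it is responsible for the remaining $\incr{t}^p$ term.

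For this I would invoke the Gr\"onwall estimate \eqref{eq:GronwallModMap}, which bounds $\tilde{\mathcal{E}}^n$ by $A\,t_n\,e^{A t_n}\|\vu - \tilde{\vu}\|_\infty$ with $A \approx \|\grad\vu\|_\infty$, reducing the whole problem to controlling the discrepancy between the true velocity $\vu$ and the modified velocity $\tilde{\vu}$ on $[0,t_n]$. Since $\tilde{\vu}$ is the order-$p$ Lagrange-in-time interpolant \eqref{eq:defModVelo} of the discrete velocities $\vu^{n-i}_\epsilon$, I would split $\|\vu - \tilde{\vu}\|_\infty$ into (i) the time-interpolation error of sampling $\vu$ at the nodes $t_{n-i}$, which is the standard $\bigO(\incr{t}^p\|\partial_t^p\vu\|_\infty)$, and (ii) the nodal discrepancy $\max_k \|\vu(\cdot, t_k) - \vu^k_\epsilon\|_\infty$, weighted by the bounded Lagrange coefficients. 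To estimate (ii) I would use that both $\vu(\cdot,t_k)$ and $\vu^k_\epsilon$ are (mollified) Biot--Savart velocities of vorticities that are pullbacks of $\omega_0$, by $\vX(\cdot,t_k)$ and by $\vhX^k$ respectively; since the mollified Biot--Savart operator $\curl\PSolve$ is a bounded smoothing operator, the velocity discrepancy is controlled by $\|\omega_0\circ\vX - \omega_0\circ\vhX^k\|_\infty \lesssim \|\grad\omega_0\|_\infty\,\mathcal{E}^k$, together with the mollification error (controlled by $\epsilon$) and the velocity-grid representation error (controlled by $\gV$). As emphasized in the remarks of Section \ref{subsec:VeloDiscr}, $\epsilon$ may be taken arbitrarily small and $\gV$ arbitrarily fine at negligible cost, so these last two contributions can be made subdominant.

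The hard part is that step (ii) produces a feedback loop: the velocity error that drives $\tilde{\mathcal{E}}^n$ is itself proportional to the map error $\mathcal{E}^k$. Assembling the pieces yields a self-referential inequality of the form $\mathcal{E}^n \lesssim C_T\,\incr{t}^p + C_T\max_{k\leq n}\mathcal{E}^k + \bigO(\incr{t}^s + \incr{x}^2\min(\incr{t}, \incr{x}^2\incr{t}^{-1}))$. I would close it by reading the Gr\"onwall bound at the level of a single time step rather than over all of $[0,t_n]$ and then summing, i.e. by a discrete Gr\"onwall argument (equivalently, induction in $n$): the per-step velocity error feeds a per-step map error, and accumulation over $t_n/\incr{t}$ steps absorbs the $\max_k\mathcal{E}^k$ term into a factor $e^{C t_n}$ that is bounded for fixed final time. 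This produces the stated bound \eqref{eq:fullCharError}, with the $\incr{t}^p$ contribution traced entirely to the time interpolation used to define $\tilde{\vu}$. The two points requiring care are precisely this circularity and the verification that the $\epsilon$- and $\gV$-dependent velocity-representation errors are genuinely subdominant rather than competing with the $\incr{x}^2$ term.
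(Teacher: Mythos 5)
Your proposal is correct and takes essentially the same route as the paper's proof: reduce via \eqref{eq:fullErrorDecomp} to bounding $\tilde{\mathcal{E}}^n$, estimate the one-step map discrepancy by $\bigO(\incr{t}\,\mathcal{E}^{n-1} + \incr{t}^{p+1})$ using the Gr\"onwall bound \eqref{eq:GronwallModMap} together with the fact that the nodal velocity error is driven by the map error, and then close the resulting circular recursion by a per-step discrete Gr\"onwall/induction argument rather than a single global Gr\"onwall application. The additional details you spell out (boundedness of the Biot--Savart operator, subdominance of the $\epsilon$- and $\gV$-dependent representation errors) are exactly what the paper relegates to the remark following its proof.
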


\begin{proof}
It is sufficient to control the evolution of $\tilde{\mathcal{E}}^n$. We note that $\vX_{[t_n, 0]} = \vX_{[t_{n-1}, 0]} \circ \vX_{[t_n, t_{n-1}]}$ and $\tilde{\vX}_{[t_n, 0]} = \tilde{\vX}_{[t_{n-1}, 0]} \circ \tilde{\vX}_{[t_n, t_{n-1}]}$, hence
\begin{gather}
\vX_{[t_n, 0]} - \tilde{\vX}_{[t_n, 0]} = (\vX_{[t_{n-1}, 0]} - \tilde{\vX}_{[t_{n-1}, 0]}) \circ \vX_{[t_n, t_{n-1}]} + \bigO( \vX_{[t_n, t_{n-1}]} - \tilde{\vX}_{[t_n, t_{n-1}]}  ) .
\end{gather}

From our estimates in \eqref{eq:GronwallModMap}, we have that $ \vX_{[t_n, t_{n-1}]} - \tilde{\vX}_{[t_n, t_{n-1}]} = \bigO (\incr{t} \| \vu - \tilde{\vu} \|_\infty )$. Given that $\omega( \vx, t_{n-1}) - \omega^{n-1}(\vx) = \bigO( \mathcal{E}^{n-1})$, we have 
\begin{gather} \label{eq:errorModOneStep}
 \vX_{[t_n, t_{n-1}]} - \tilde{\vX}_{[t_n, t_{n-1}]} = \bigO( \incr{t} \mathcal{E}^{n-1} + \incr{t}^{p+1} ) ,
\end{gather}
where we incurred an extra order $p+1$ error from the Lagrange interpolation.

Therefore, 
\begin{gather} \label{eq:modErrorEvo}
\tilde{\mathcal{E}}^n \leq \tilde{\mathcal{E}}^{n-1} + \bigO( \incr{t} \mathcal{E}^{n-1} + \incr{t}^{p+1} ) \\
= \tilde{\mathcal{E}}^{n-1} + \bigO \left( \incr{t} \tilde{\mathcal{E}}^{n-1}+ \incr{t} ( t_n ( \incr{x}^{2} \min( \incr{t}, \incr{x}^2 \incr{t}^{-1} ) + \incr{t}^s )) + \incr{t}^{p+1} \right) ,
\end{gather}
which implies that
\begin{gather} \label{eq:fullModCharError}
\tilde{\mathcal{E}}^n = \bigO \left(    \incr{x}^{2} \min( \incr{t}, \incr{x}^2 \incr{t}^{-1} ) + \incr{t}^s  + \incr{t}^p  \right) .
\end{gather}
Together with \eqref{eq:fullErrorDecomp}, we obtain the desired error estimate.
\end{proof}

\begin{remark}
Equation \eqref{eq:errorModOneStep} in fact omits a sampling error incurred when defining $\vu^n_\epsilon$, due to sampling the vorticity at discrete points. In this method, we define $\omega^n_\epsilon$ by convolution with a pullback mollifier instead of directly evaluating $\omega_0 \circ \vhX^n$ at sample points. This allows us to justify a Fourier truncation at low number of modes, however, we incur an error term which is second order in the width of the mollifier. This is omitted from the analysis since the sampling grid for $\omega^n_\epsilon$ is independent of the computational grid for the map, hence the sampling error can be controlled separately.
\end{remark}

\subsubsection{Convergence Tests}

We provide here some numerical evidence for the error estimates derived above. We will more extensively test the full method in section \ref{sec:numTests}.

We use a standard four-modes initial condition \eqref{eq:initVortDef} to test the convergence. For the spatial error, we fix $\incr{t}$ at $1/512$ using third order Lagrange interpolation in time and third order explicit Runge-Kutta for time-integration. We then vary the spatial grid size between $32$ to $512$. To test the error in the time variable, we fix the spatial grid at $1024$ and vary $\incr{t}$ between $1/8$ and $1/128$. This time, we test both a second and third order Lagrange interpolant while keeping the same Runge-Kutta scheme as before. This aims to show the independence of the conservation error from the solution error. For all tests, we sample the vorticity on a $1024$ grid and represent the stream function as a piecewise Hermite cubic interpolant on a $2048$ grid. We run the simulation to time $t=1$ and calculate the errors in the following quantities:
\begin{subequations} \label{eqGroup:errorMeasDef}
\begin{align} 
\text{Map error } & := \| \vhX^n - \vX(\cdot, t_n) \|_\infty, \\
\text{Vorticity error } & := \| \omega^n - \omega(\cdot, t_n) \|_\infty , \\
\text{Enstrophy conservation error } & := \|\omega^n\|_{L^2}^2 -  \|\omega_0\|_{L^2}^2 , \\
\text{Energy conservation error } & := \|\vu^n\|_{L^2}^2 -  \|\vu_0\|_{L^2}^2 . 
\end{align}
\end{subequations}

Conservation errors are calculated directly, the sup-norm map and vorticity errors are estimated by comparing each result to the $\incr{x} = 1/1024$, $\incr{t} = 1/512$ test. The functions are evaluated on a 2048 grid, both the $L^\infty$ and $L^2$ norms are approximated by their discrete variant on this grid. The results are shown in figure \ref{fig:convTest}.
\begin{figure}[h]
\captionsetup[subfigure]{justification=centering}
\centering
\begin{subfigure}[h]{0.32\linewidth}
\centering
\includegraphics[width = \linewidth]{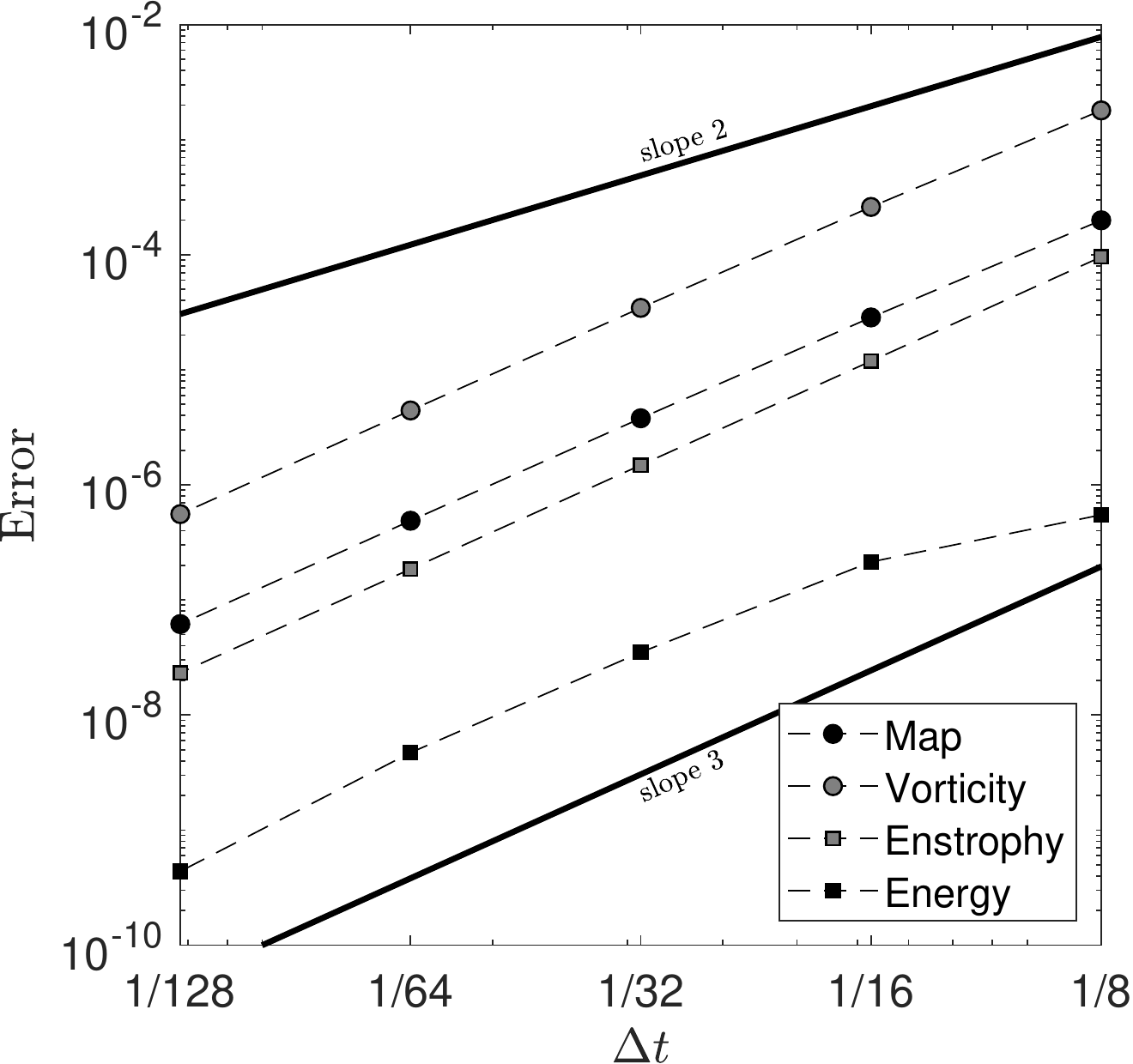}
\caption{Time convergence test,\\$3^{\text{rd}}$ order Lagrange.\\$\incr{x} = 1/1024$.}
\label{subfig:tConvL3RK3}
\end{subfigure}
\begin{subfigure}[h]{0.32\linewidth}
\centering
\includegraphics[width = \linewidth]{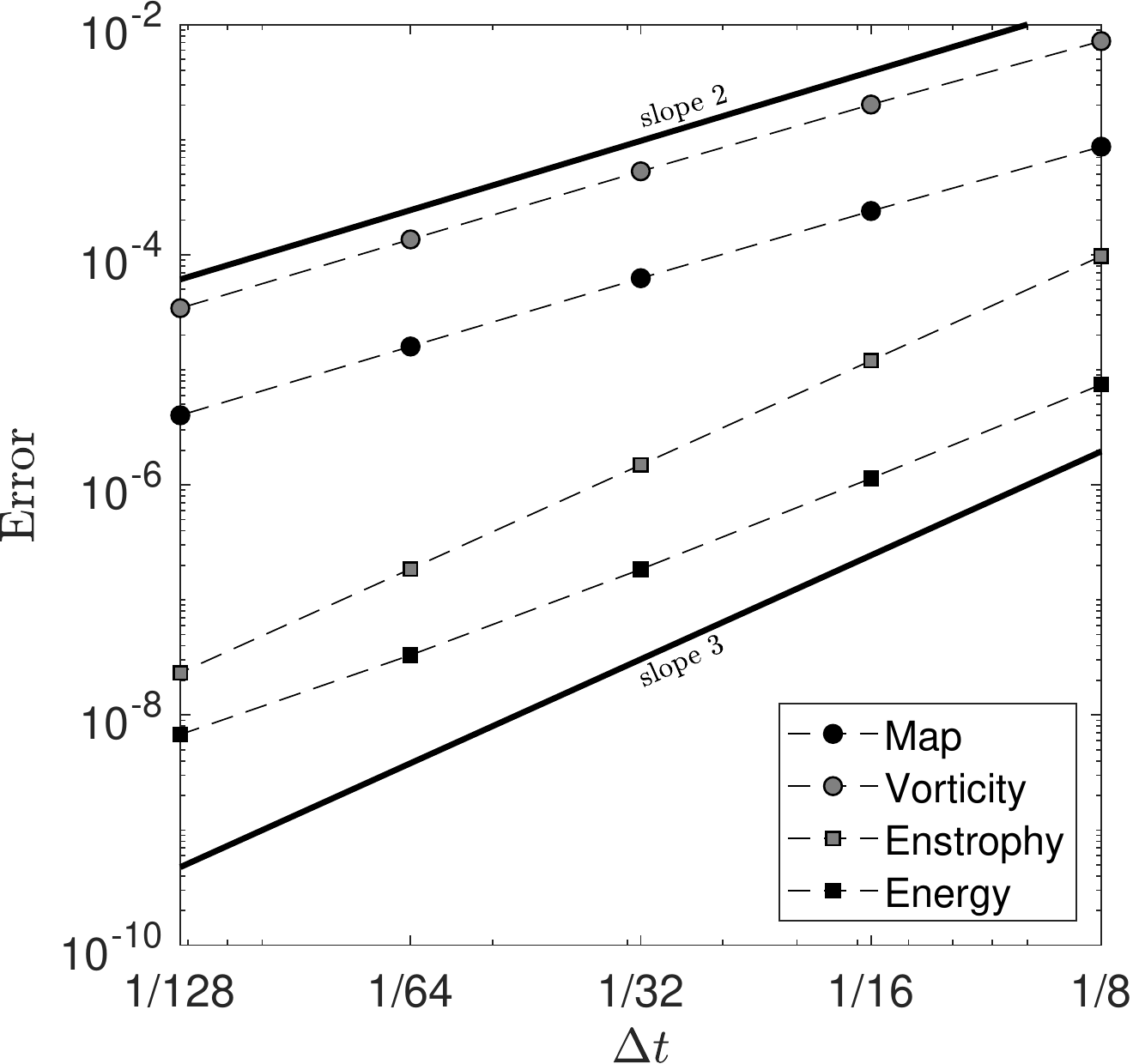}
\caption{Time convergence test,\\$2^{\text{nd}}$ order Lagrange.\\$\incr{x} = 1/1024$.}
\label{subfig:tConvL2RK3}
\end{subfigure}
\begin{subfigure}[h]{0.32\linewidth}
\centering
\includegraphics[width = \linewidth]{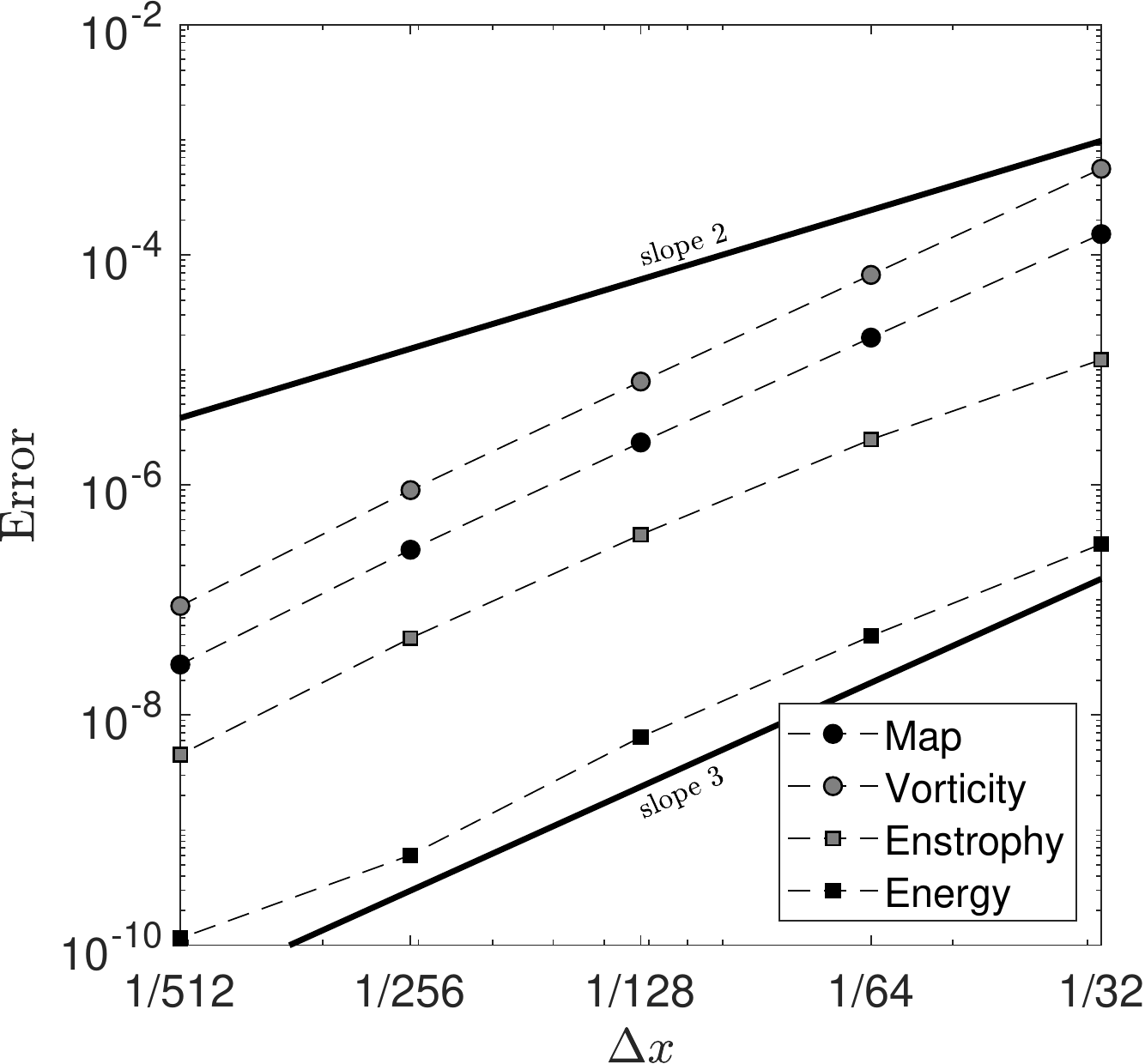}
\caption{Spatial convergence test,\\$3^{\text{nd}}$ order Lagrange.\\$\incr{t} = 1/512$.}
\label{subfig:sConvL3RK3}
\end{subfigure}
\caption{Map and vorticity error and conservation errors of enstrophy and energy for the Characteristic Mapping method without remapping.}
\label{fig:convTest}
\end{figure}

Figures \ref{subfig:tConvL3RK3} and \ref{subfig:tConvL2RK3} are both time convergence plots, the difference is that one uses a third order Lagrange interpolant for the definition of $\tilde{\vu}$ and the other, second order. We see that as expected, the enstrophy conservation error is independent of the choice of Lagrange interpolant and is third order in both cases due to the use of RK3 integration for $\tilde{\vu}$. The error on the map values and vorticity however, do depend on the accuracy of $\tilde{\vu}$ and have third and second order convergence for the respective tests.

Figure \ref{subfig:sConvL3RK3} shows the convergence with respect to $\incr{x}$. Our error estimates suggest a convergence between $\bigO (\incr{t} \incr{x}^2)$ and $\bigO (\incr{t}^{-1} \incr{x}^4)$. For a fixed $\incr{t}$, this is between second and fourth order. This ambiguity comes from the time stepping in GALS methods. The grid data at time $t_{n+1}$ are obtained by evaluating the time $t_n$ Hermite interpolants at $\vhX_{[t_{n+1}, t_n]} (\vx_g)$ for $\vx_g$ a grid point; $\vhX_{[t_{n+1}, t_n]} (\vx_g)$ is commonly called the ``foot-point''. The interpolation error depends on the location of the foot-point relative to grids points. In each dimension, Hermite cubic interpolation errors scales quadratically with both closest grid points. This implies that the interpolation error is $\bigO (\incr{x}^2 \incr{t}^2)$ if $\incr{t} \ll \incr{x}$ and $\bigO( \incr{x}^4)$ otherwise. This is consistent with the third order convergence we see in figure \ref{subfig:sConvL3RK3}.

The experiments in this section suggest that the CM method with Hermite cubic spatial interpolation, third order Lagrange time interpolation and RK3 time integration yields a globally third order method. This is to provide some support for the error estimates in section \ref{subsec:errors}. In practice, since a faithful representation of fine scale features in the velocity field does not contribute very much to the global dynamics and deformation of the domain, we use a coarse grid to represent short time characteristic maps in order to improve efficiency. In this regard, the remapping step presented in the next section will play an important role in maintaining an accurate resolution of the fine scale features in the deformation map generated by long term advection. We will then provide more numerical results and benchmark tests in section \ref{sec:numTests}.

\subsection{Adaptive Remapping and Arbitrary Resolution} \label{subsec:remap}

In the absence of a viscosity term, solutions of Euler equations tend to develop arbitrarily small scale spatial features. As a result, a fixed grid for representing the characteristic map is only valid for a limited amount of time before spatial resolution needs to be increased. Changing the computational grid during simulations can be cumbersome and adversely affect the speed of all computations thereafter. We use instead a decomposition method based on the semigroup property of the characteristic map mentioned in section \ref{sec:SemiGroup}. Numerically, the time $t$ characteristic map can be constructed as the composition of several submaps of time subintervals:
\begin{gather}
\vhX( \cdot, t)  := \vhX_{[ \tau_1, 0]} \circ  \vhX_{[\tau_2, \tau_1]} \circ \cdots \circ  \vhX_{[\tau_{m-1}, \tau_{m-2}]} \circ  \vhX_{[t, \tau_{m-1}]} ,
\end{gather}
for some subdivision $0 < \tau_1 < \tau_2 < \cdots < \tau_{m-1} < t$. Each of these submaps are computed using the CM method described in previous sections. We initialize $\vhX_{[\tau, \tau_i]}$ with the identity map at $\tau =\tau_i$ and evolved until a remapping time $\tau =\tau_{i+1}$ which can be determined dynamically. Once the remapping time is reached, we store $\vhX_{[\tau_{i+1}, \tau_i]}$ in memory and start computing the map for the next subinterval.

Heuristically speaking, each of the subintervals $[\tau_i, \tau_{i+1}]$ should be short enough such that the grid used to discretize $\vhX_{[\tau_{i+1}, \tau_i]}$ can correctly represent the deformation generated by the velocity $\tilde{\vu}$ in this interval. Here we should point out that the  intervals $[\tau_{i}, \tau_{i+1}]$ should be distinguished from the $[t_n, t_{n+1}]$ from previous sections. The latter has length $\incr{t}$ and is used to discretize the one-step maps $\vhX_{[t_{n+1}, t_n]}$; these are immediately composed and projected through $\vhX_{[t_{n+1}, t^\dag]} = \He_\gM \left[ \vhX_{[t_{n}, t^\dag]} \circ \vhX_{[t_{n+1}, t_n]} \right]$, they are used to evolve the maps. The intervals $[\tau_i, \tau_{i+1}]$ on the other hand are longer, each $\vhX_{[\tau_{i}, \tau_{i+1}]}$ comprise of several $\incr{t}$ steps. Once computed, they will be stored in memory.

In this implementation, we use the error in the Jacobian determinant 
\begin{gather} \label{eq:defDetError}
e^n_{\det} := \| \det \grad \vhX^n - 1\|_\infty
\end{gather} 
as a measurement of the map quality, based on which we choose the remapping times. We pick an error threshold $\delta_{det}$ for the submaps. The $i^{\text{th}}$ submap is initialized with the identity map. After each time step, we compute the Jacobian determinant of $\vhX_{[\tau_i + n \incr{t}, \tau_i]}$ at off-grid sample points. If for some $n$, the Jacobian error exceeds $\delta_{det}$ for the first time, we define $\tau_{i+1} := \tau_i + n \incr{t}$ and store $\vhX_{[\tau_{i+1}, \tau_i]}$. The same process is repeated for the $i+1^{\text{st}}$ submap using $\tau_{i+1}$ as initial time.

Although $\| \det \grad \vhX^n - 1\|_\infty < \delta_{\det}$ does not constitute a proper bound on the error in $\vhX$, we can use this as an approximate \emph{a posteriori} error estimate for the gradient of the map compared to that of the map from the modified equation. Indeed, since $\| \vhX - \tilde{\vX} \|_\infty$ is the error from using the CM method on the velocity $\tilde{\vu}$, it is globally third order when $\incr{x} \sim \incr{t}$. Assuming that $\grad \vhX - \grad \tilde{\vX}$ is small, we can justify the following first order expansion
\begin{gather}
e_{\det}= \| \det \grad \vhX - \det \grad \tilde{\vX} \|_\infty \approx \left\|  \text{tr} \left( \grad \tilde{\vX}^{-1} ( \grad \vhX - \grad \tilde{\vX} ) \right) \right\|_\infty = \bigO ( \| \grad \vhX - \grad \tilde{\vX}  \|_\infty  )
\end{gather}

Choosing the remapping times such that for each subinterval, we have $ e_{\det}  < \delta_{\det}$ implies that each submap has $\bigO ( \incr{x} \delta_{\det} )$ error with respect to $\tilde{\vX}$ and hence volume preservation and enstrophy conservation error of the same order. In turn, since the error is 0 at the initial time and we remap at the first time step where this threshold is exceeded, we know that we can assume the error to be small enough to justify the above first order expansion (at least for all previous time steps).

This remapping technique is key in the accurate, dissipation-free resolution of the vorticity field. Qualitatively speaking, there are two types of errors in this method, one is dissipative in nature, and the other, ``advective''. Dissipative error refers to artificial diffusion (or diffusion-like) terms that we incur from spatial truncation of the solution. When we represent an evolving quantity on a fixed spatial grid, the high frequency features of the solution, namely those above the grid's Nyquist frequency, are loss. When these spatial truncations are directly applied to the Euler equations, we get artificial dissipation of $\omega$ or $\vu$ resulting in loss of enstrophy or energy. In the case of Fourier-Galerkin truncation, the dissipative errors can resonate with the solution resulting in numerical artefacts and spurious oscillations \cite{ray2011resonance}.

Due to the discrete nature of numerical computations, truncation errors are somewhat inevitable. In the present method, the evolution of $\vhX^n$ (and only $\vhX^n$) contains a diffusive type error since during each GALS update step, the Hermite cubic interpolation consists of a $4^{\text{th}}$ order averaging of grid values. The leading order error is a $4^{\text{th}}$ order spatial derivative acting like a squared Laplacian. Over time, this accumulated averaging error artificially smooths out the map and resists fine scale deformations which might be present in the true solution. However, since $\omega^n = \omega_0 \circ \vhX^n$, the error in the vorticity is not dissipative in nature. The vorticity is not directly obtained from the previous step $\omega^{n-1}$ and there is no averaging involved. Instead, the error occurs only at the evaluation of $\omega^n$ and is produced by evaluating $\omega_0$ at a wrong position. In fact, since $\vX$ and $\vhX^n$ are both diffeomorphisms of $\Omega$, there exists a diffeomorphism $\bm{\Psi}^n =  (\vhX^n)^{-1} \circ \vX_{[t_n, 0]}$ such that
\begin{gather}
\vX_{[t_n, 0]} = \vhX^n \circ \bm{\Psi}^n,
\end{gather}
The error for $\omega^n$ can then be seen as an advective error in the sense that
\begin{gather}
\omega (\vx, t_n) = \omega^n (\bm{\Psi}^n(\vx)),
\end{gather}
where $\omega$ on the left-hand side refers to the true solution.

This means that qualitatively speaking, the global dynamics of the solution are not obtained from a viscous approximation: the numerical fluid is still inviscid. We make an error on the position of the vortices, controlled by the error of the characteristic map. In particular, it is a straightforward consequence that the numerical solution has the correct $L^\infty$-norm. Moreover, all $L^p$-norms for $1 \leq p < \infty$ are controlled by $\| \vhX - \tilde{\vX} \|_\infty$. Essentially, the CM method places the inevitable diffusive truncation error on the deformation map so that by composition with $\omega_0$, the dissipative error in $\vhX$ manifests itself in $\omega$ as an advective error, hence preserving the inviscid quality of the numerical solution.
 
Going back to the remapping routine, we apply the same principle. In limiting the length of the submap intervals $[\tau_i, \tau_{i+1}]$ by choosing a small $\delta_{\det}$, we limit the amount of artificial diffusion a single submap can accumulate. This prevents the dissipative type error from smoothing out the map and smearing out the fine scale deformations generated by the advection: the global time map is constructed by composition of the short time submaps, also resulting in an advection type error for the map.

In practice, we can interpret the choice of $\delta_{\det}$ in several ways. On one hand, since the use of a smoother $\vu^n_\epsilon$ velocity can be seen as coarse scale discretization of the fluid velocity in the sense of the LAE-$\alpha$ and non-Newtonian fluid equations, we can view the choices of the remapping threshold $\delta_{\det}$ as control on the artificial elasticity of the numerical flow due to spatial truncations. In this sense, the CM method does not make an error on the viscosity, rather it allows for some small controlled elasticity in the fluid. On the other hand, $\delta_{\det}$ controls the error on the volume preserving property of the map. The numerical deformation map is not exactly volume preserving, hence the characteristic paths approximate to those of a fluid that is slightly compressible. Therefore, the CM method avoids numerical dissipation by allowing for a small controlled compressibility in particle paths. It is important to note however that the vorticity field is still advected and is not stretched by the compressibility relaxation of the characteristic map.

Lastly, one can also look at the spatial resolution of the remapping routine from the point of view of the gradient of the represented quantities. Heuristically speaking, the maximum gradient that can be accurately represented on a grid of cell width $\incr{x}$ is $\bigO ( \incr{x}^{-1} )$, that is, the gradient scales roughly with $N$, the number of grid points per dimension. It follows that for an exponentially growing vorticity gradient, the required grid size to avoid excessive truncation errors grows exponentially also. For methods where the evolution of $\omega$ is carried out additively, i.e. methods of the type
\begin{gather}
\omega^{n+1} = \omega^n + \incr{t} \partial_t \omega^n,
\end{gather}
it implies that computations for $\partial_t \omega^n$ must be carried out on an exponentially growing grid.

On the other hand, $\omega$ is not evolved additively in the CM method, the gradient is instead generated by the characteristic map:
\begin{gather}
\grad \omega( \cdot, t_n) = \grad \omega_0 \grad \vhX_{[t_n, 0]}.
\end{gather}
Here, the exponential quantity is $\grad \vhX^n$. This growth is however a natural property of the characteristic map (the map being itself the exponential flow map of the backward time velocity). In fact, the semigroup decomposition \eqref{eq:SubmapDecompGrad} is the intrinsic generating process of the gradient just as multiplication is the generating process of the exponential function. We have
\begin{gather}  
\grad \vhX_{[t, 0]}  = \prod_{j=1}^m  \grad \vhX_{[\tau_j, \tau_{j-1}]} ,
\end{gather}
where the $\ell^2$ operator norm $\| \grad \vhX_{[\tau_j, \tau_{j-1}]} \|_2 $ of each submap gradient is expected to scale exponentially with $\incr{\tau}_j = \tau_j - \tau_{j-1}$. Therefore, by appropriately choosing the remapping criterion, one can make $\incr{\tau}$ small enough that the gradient of each submap is bounded of order $\bigO (1 + \incr{\tau})$, hence representable on a coarse grid. This means that through the semigroup property, we can generate exponential growth in the vorticity gradient without having to do computations on an exponentially growing grid. As we shall see in the next section, this yields a computationally efficient method which captures arbitrarily fine scales and arbitrarily large gradients in the solution.

\section{Numerical Tests} \label{sec:numTests}

In this section we present some numerical tests using the CM method for 2D Euler to simulate incompressible flows starting from some given initial condition. All computations are done on Matlab using double precision. However, the GALS method for evolving the characteristic map uses a $4^{\text{th}}$ order $\epsilon$-difference method to replace the analytic chain rules involved in map updates. This simplifies implementation, but with our choice of $\epsilon = 5\times 10^{-4}$, it effectively limits the machine precision for the characteristic map to around $10^{-13}$.

\subsection{``4-modes'' test} \label{subsec:num4modes}

We test the CM method for 2D incompressible Euler using the ``4-modes'' initial condition. We use the tests performed in \cite{frisch} as reference. The initial vorticity is given by:
\begin{gather} \label{eq:initVortDef}
\omega_0 (x, y) = \cos(x) + \cos(y) + 0.6 \cos(2x) + 0.2 \cos(3x),
\end{gather}
This flow can roughly be characterised as two vortices of opposite signs partitioning a flat torus. The contour plot of $\omega_0$ and $\Laplace \omega_0$ are shown in figure \ref{fig:initVort}. For this flow, we used a $128^2$ grid for the evolution of the maps with a $512^2$ grid to represent $\tilde{\vu}$. The time step $\incr{t}$ was set at $1/32$ and the remapping determinant error threshold to $10^{-4}$. The simulation was run to times $3.5$ and $4$ for the vorticity spectrum plot in figure \ref{fig:compareSpectrum}, and for several times until $t=8$ for the long time simulation in figures \ref{fig:vortContours} an \ref{fig:LVortContour}.

\begin{figure}[h]
\centering
\begin{subfigure}{0.21\linewidth}
\centering
\includegraphics[width = \linewidth]{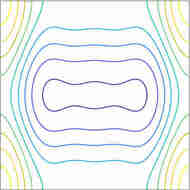}
\caption{$\omega_0$}
\label{subfig:w0}
\end{subfigure}
\hspace*{5pt}
\begin{subfigure}{0.21\linewidth}
\centering
\includegraphics[width = \linewidth]{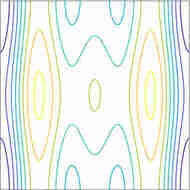}
\caption{$\Laplace \omega_0$}
\label{subfig:Lw0}
\end{subfigure}
\caption{Contour plot of the 4-modes initial vorticity and its Laplacian.}
\label{fig:initVort}
\end{figure}

The same tests were performed in \cite{frisch} up to time $t=5$ using the Cauchy-Lagrange method of various truncation order in time on spatial grids up to $8192^2$. This was necessary due to the presence of large high frequency components in the vorticity at large times and to the necessary anti-aliasing routines in Fourier pseudo-spectral methods. On the other hand, one can justify using a coarser $128^2$ grid for the submap evolution in the CM method since the submaps are remapped and reset to identity before large high frequency features can form. Furthermore, these maps are evolved using the velocity field $\tilde{\vu}$ which, by the Biot-Savart law, has a faster decay in its Fourier coefficients than the vorticity field.

The simulations in this section were carried out on Matlab on a computer with an Intel Core i5-2320 3.00GHz 4 cores processor and 8GB of RAM. For the current simulations, we have not used any parallelization routines. However, almost all the computational time is spent on Hermite interpolations. Parallel and GPU implementation of these operations are standard and could drastically improve the speed of the simulations. Parallelization and application of domain decomposition techniques (possibly for the Biot-Savart kernel) may be of interest for future work.

Figures \ref{fig:vortContours} and \ref{fig:LVortContour} show the contour plots of the vorticity and its Laplacian at length 1 time intervals between 0 and 8. The characteristic maps are computed on a coarse gird, we only use a fine grid sampling of the vorticity to generate the figures. Table \ref{tab:rmrt_T8} shows the number of remaps and total computational times required to reach the various plotting times.

\begin{table}[h] \footnotesize
\begin{center}
{\renewcommand{\arraystretch}{1.5}
\begin{tabular}{c | c c c c c c c c}
\hline
\hline
$t$ & 1 & 2 & 3 & 4 & 5 & 6 & 7 &8 \\
\hline
Number of remaps & 1 & 4 & 14 & 30 & 47 & 65 & 88 & 105 \\
Total CPU time  & 20 s & 41 s & 66 s & 100 s & 145 s & 202 s & 274 s & 359 s \\
\hline
\end{tabular}} \\
\end{center}
\caption{Number of remaps and CPU times for the 4-modes test.}
\label{tab:rmrt_T8}
\end{table}

\begin{figure}[h]
\captionsetup[subfigure]{justification=centering}
\centering
\begin{subfigure}{0.21\linewidth}
\centering
\includegraphics[width = \linewidth]{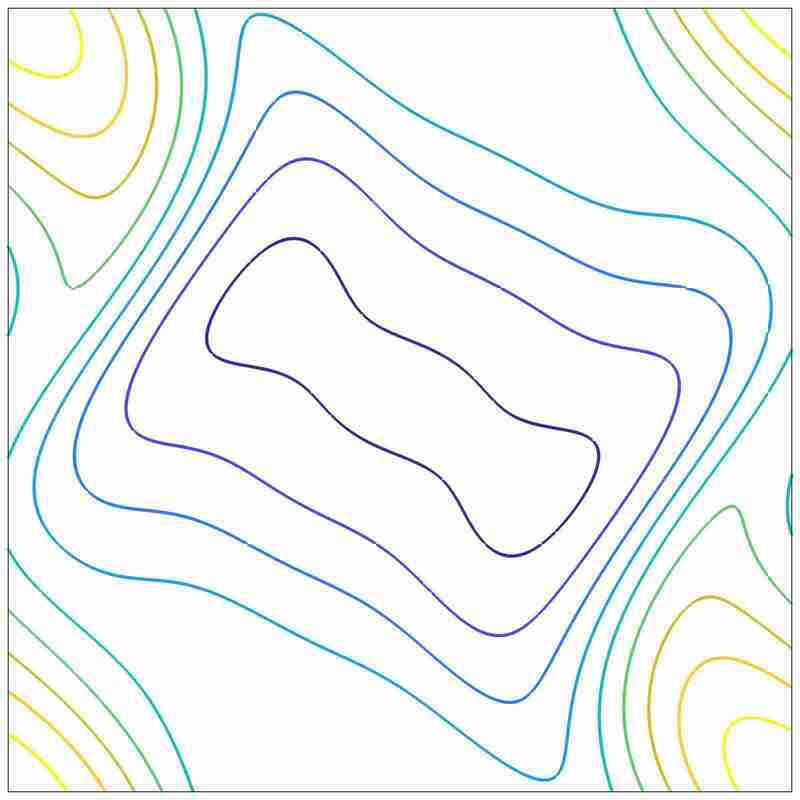}
\caption{$t=1$}
\label{subfig:W1}
\end{subfigure}
\begin{subfigure}{0.21\linewidth}
\centering
\includegraphics[width = \linewidth]{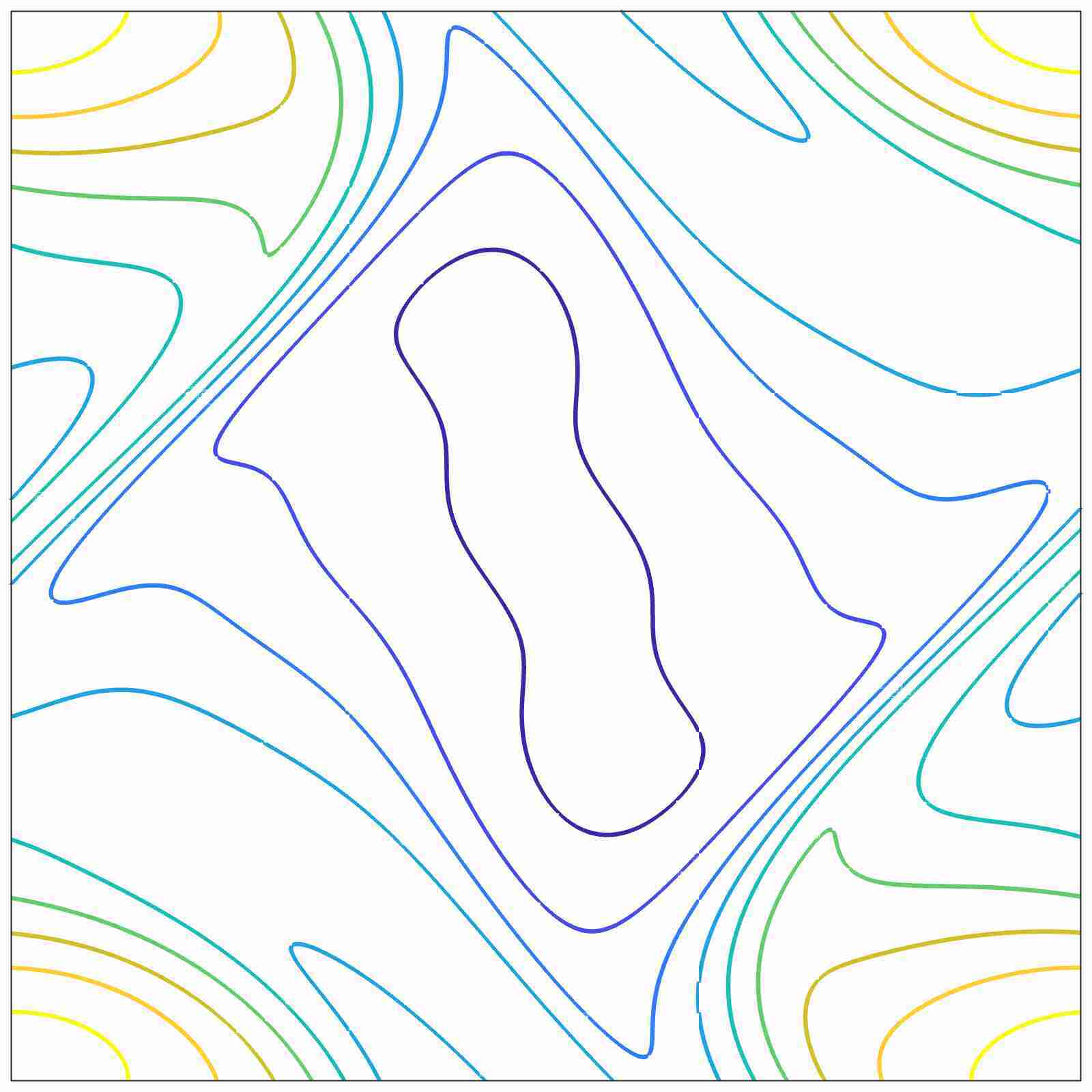}
\caption{$t=2$}
\label{subfig:W2}
\end{subfigure}
\begin{subfigure}{0.21\linewidth}
\centering
\includegraphics[width = \linewidth]{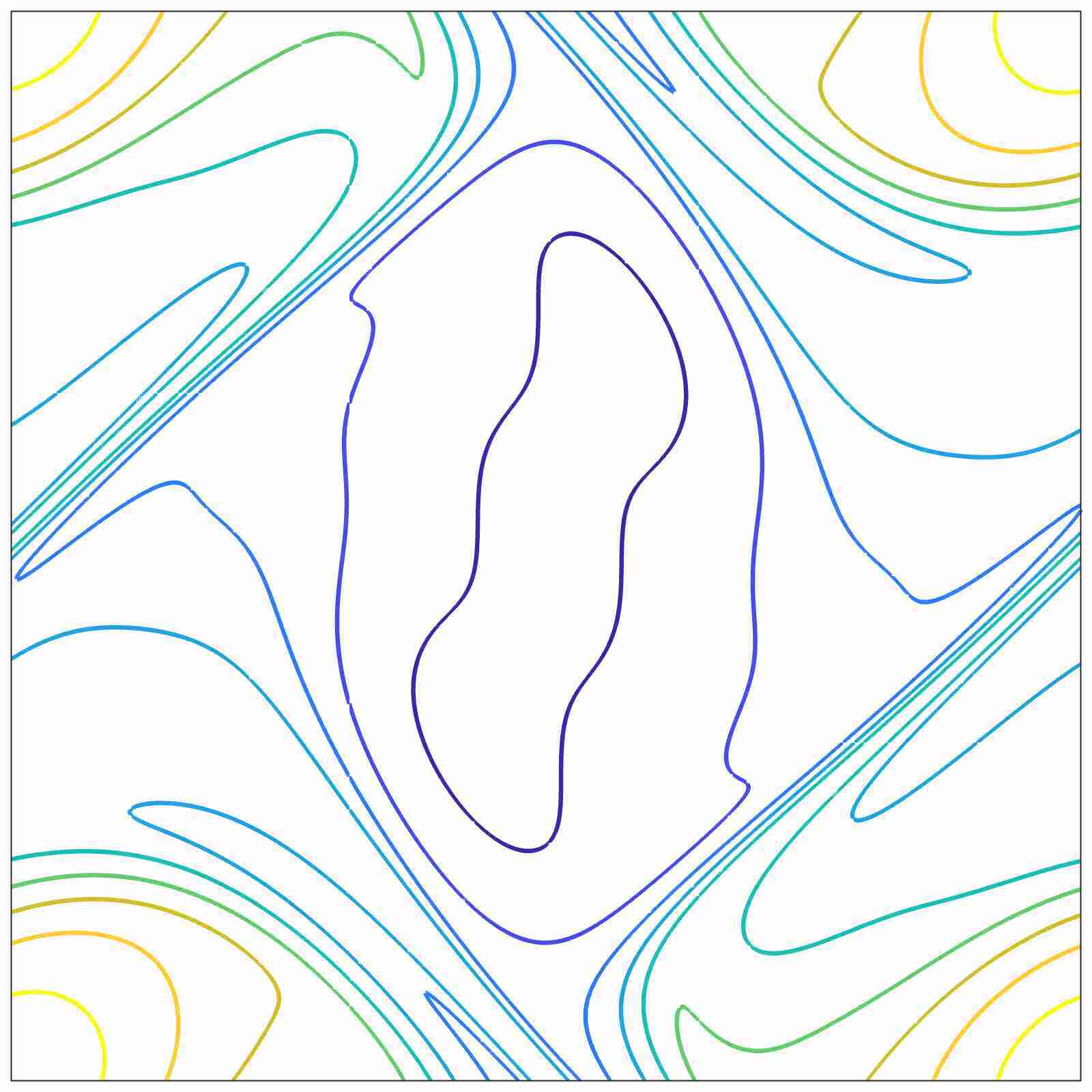}
\caption{$t=3$}
\label{subfig:W3}
\end{subfigure}
\begin{subfigure}{0.21\linewidth}
\centering
\includegraphics[width = \linewidth]{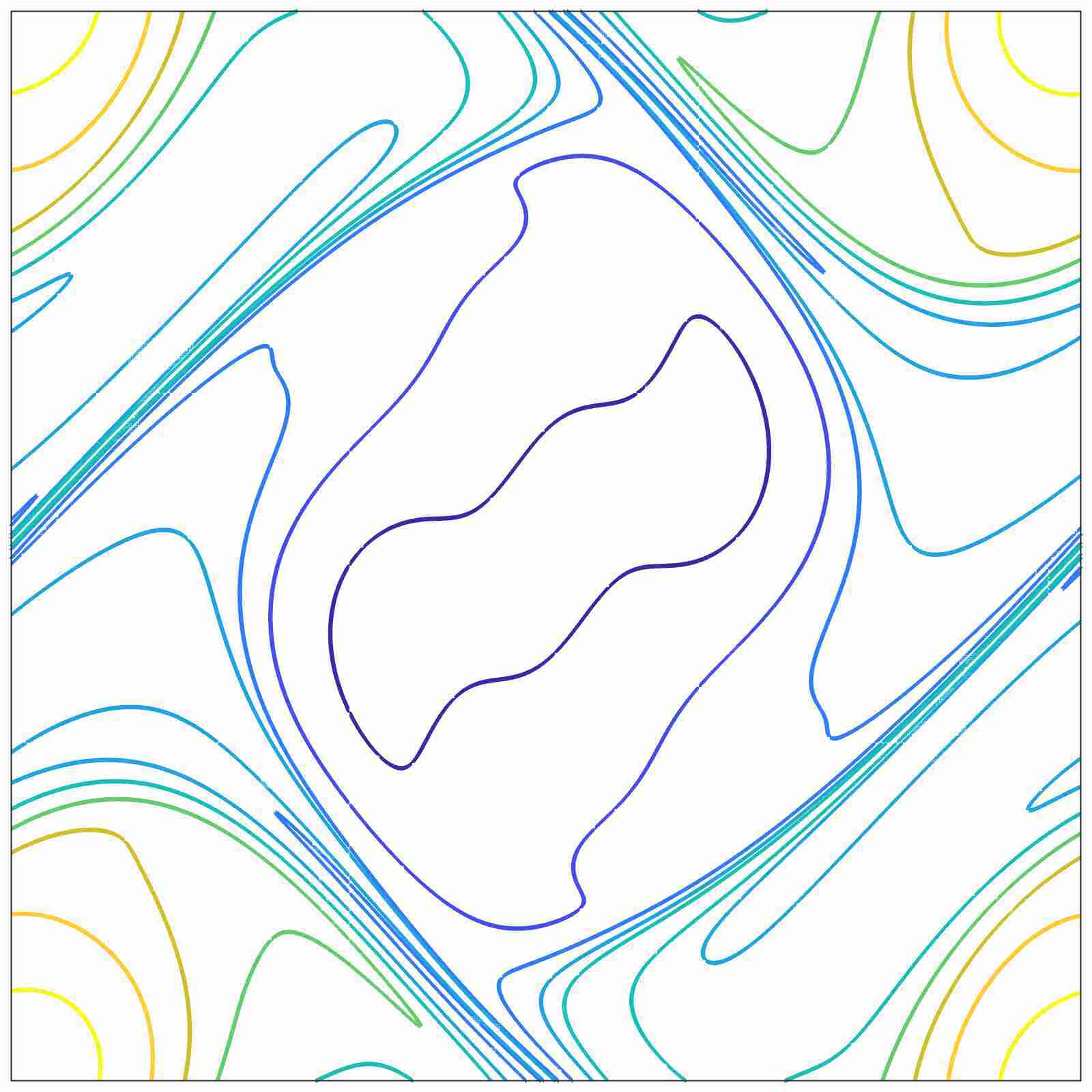}
\caption{$t=4$}
\label{subfig:W4}
\end{subfigure}
\begin{subfigure}{0.21\linewidth}
\centering
\includegraphics[width = \linewidth]{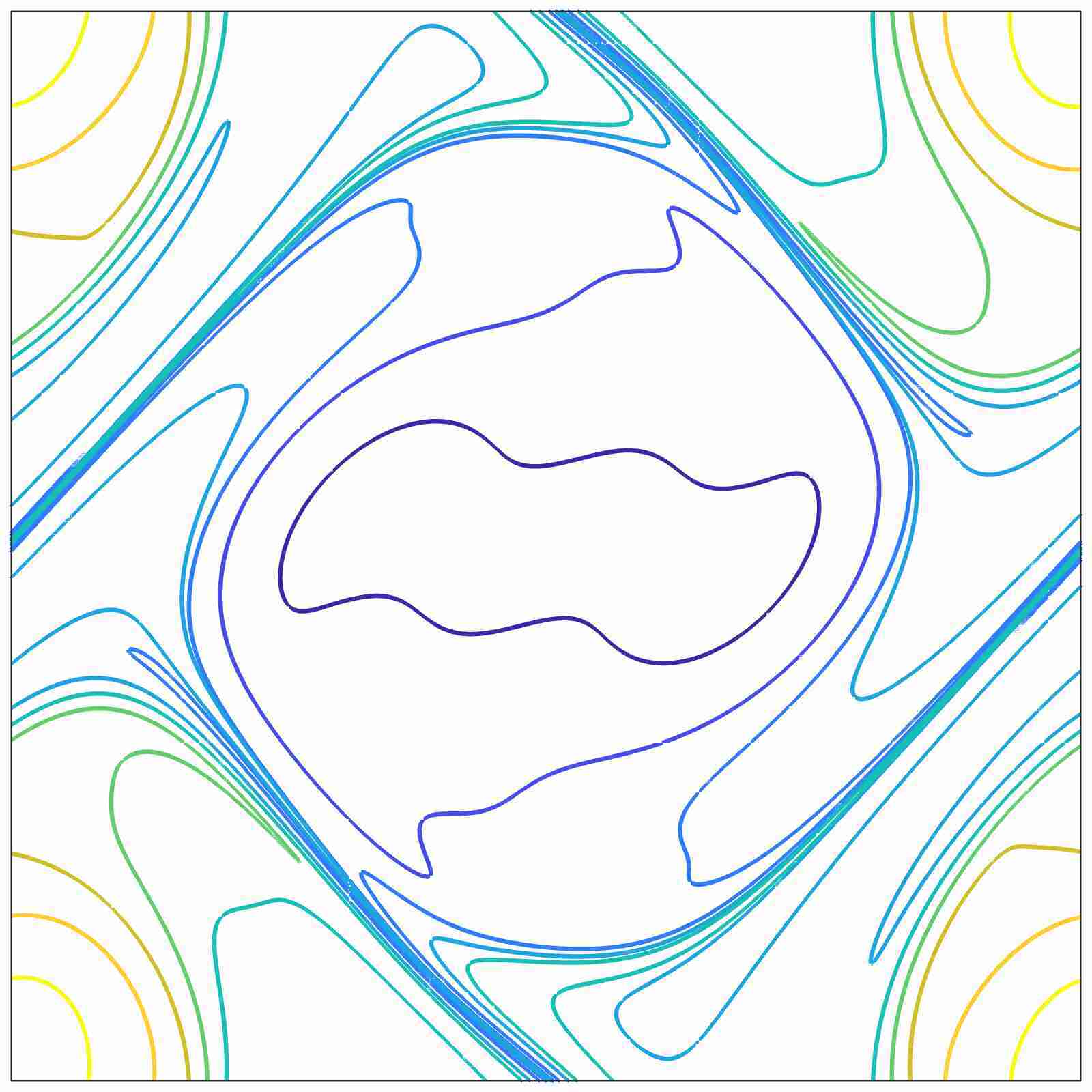}
\caption{$t=5$}
\label{subfig:W5}
\end{subfigure}
\begin{subfigure}{0.21\linewidth}
\includegraphics[width = \linewidth]{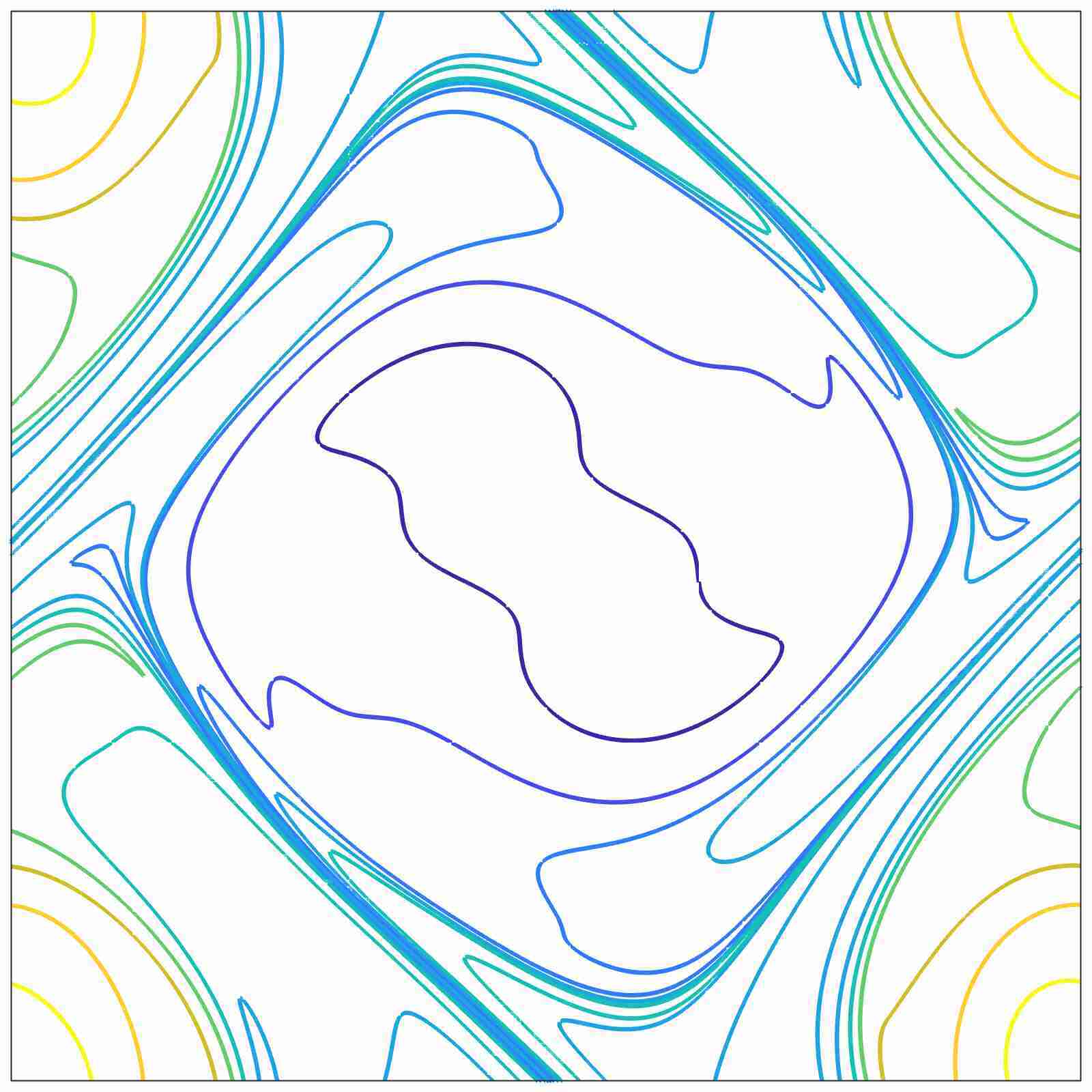}
\caption{$t=6$}
\label{subfig:W6}
\end{subfigure}
\begin{subfigure}{0.21\linewidth}
\includegraphics[width = \linewidth]{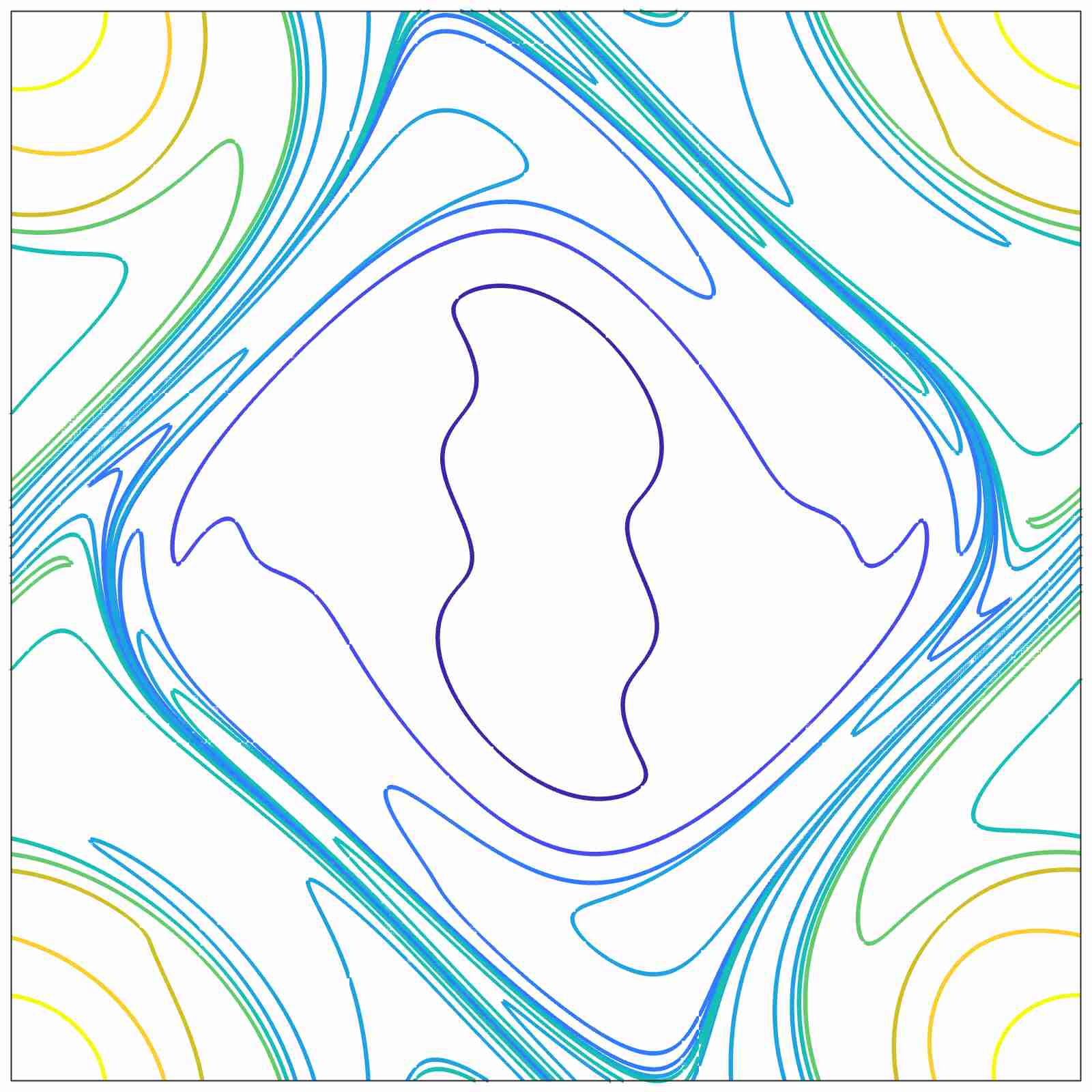}
\caption{$t=7$}
\label{subfig:W7}
\end{subfigure}
\begin{subfigure}{0.21\linewidth}
\includegraphics[width = \linewidth]{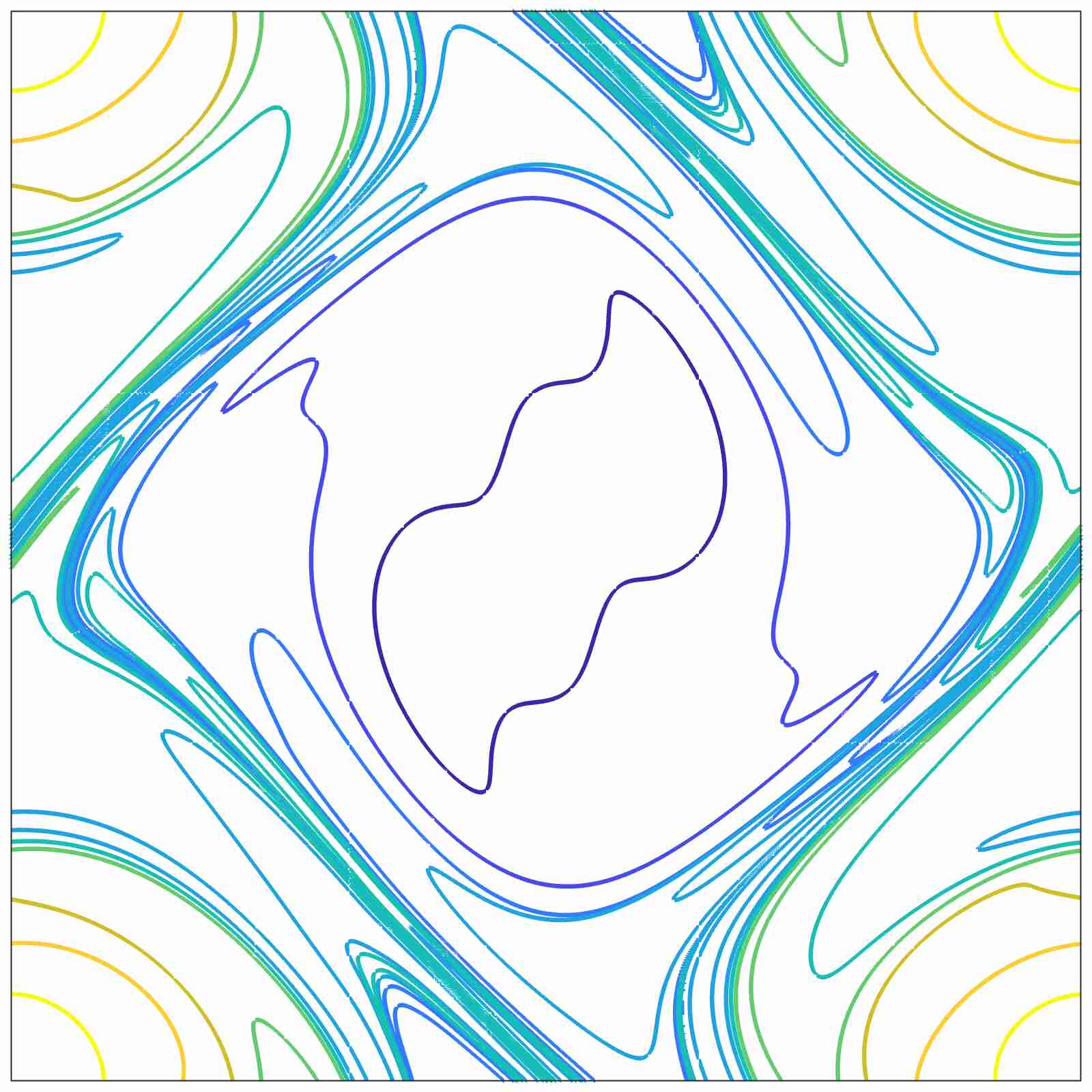}
\caption{$t=8$}
\label{subfig:W8}
\end{subfigure}
\caption{Contour plot of the vorticity using $128^2$ grid for $\vhX^n$, $512^2$ grid for representing $\psi^n$, $\incr{t} = 1/32$ and $\delta_{\det} = 10^{-4}$.}
\label{fig:vortContours}
\end{figure}

\begin{figure}[h]
\captionsetup[subfigure]{justification=centering}
\centering
\begin{subfigure}{0.21\linewidth}
\centering
\includegraphics[width = \linewidth]{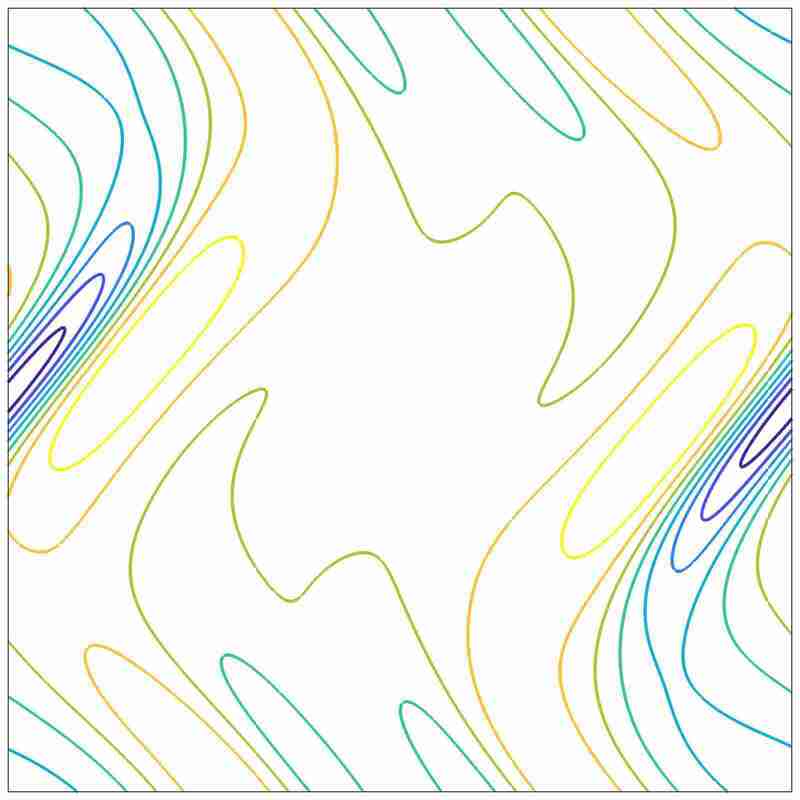}
\caption{$t=1$}
\label{subfig:LW1}
\end{subfigure}
\begin{subfigure}{0.21\linewidth}
\centering
\includegraphics[width = \linewidth]{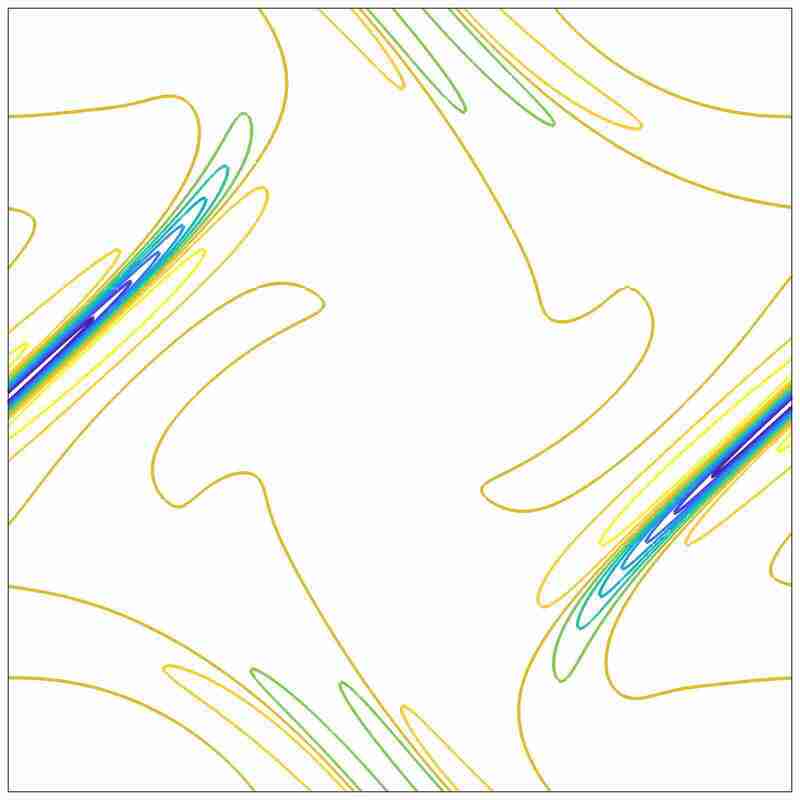}
\caption{$t=2$}
\label{subfig:LW2}
\end{subfigure}
\begin{subfigure}{0.21\linewidth}
\centering
\includegraphics[width = \linewidth]{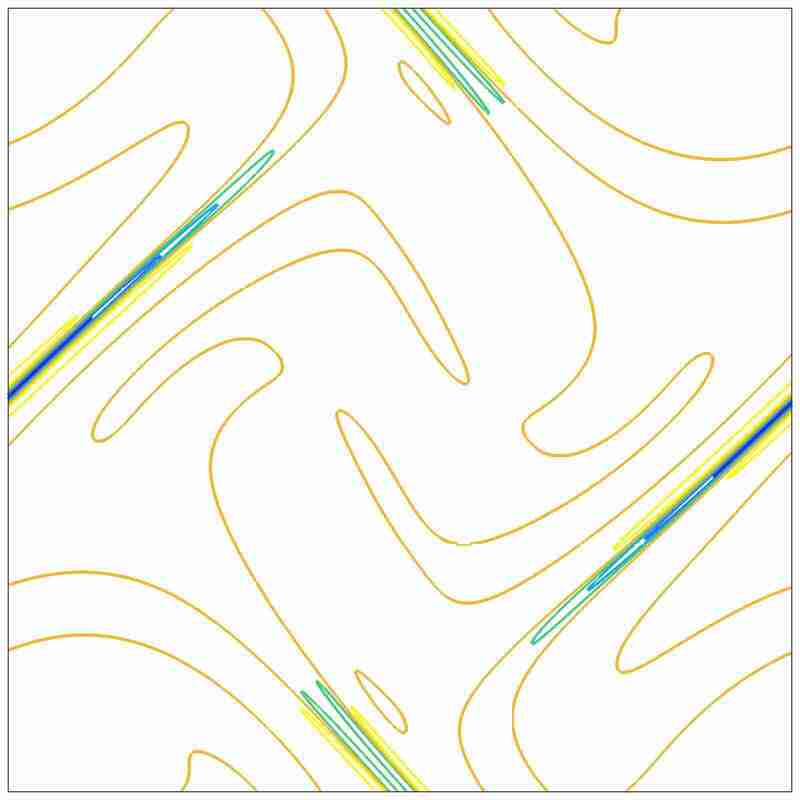}
\caption{$t=3$}
\label{subfig:LW3}
\end{subfigure}
\begin{subfigure}{0.21\linewidth}
\centering
\includegraphics[width = \linewidth]{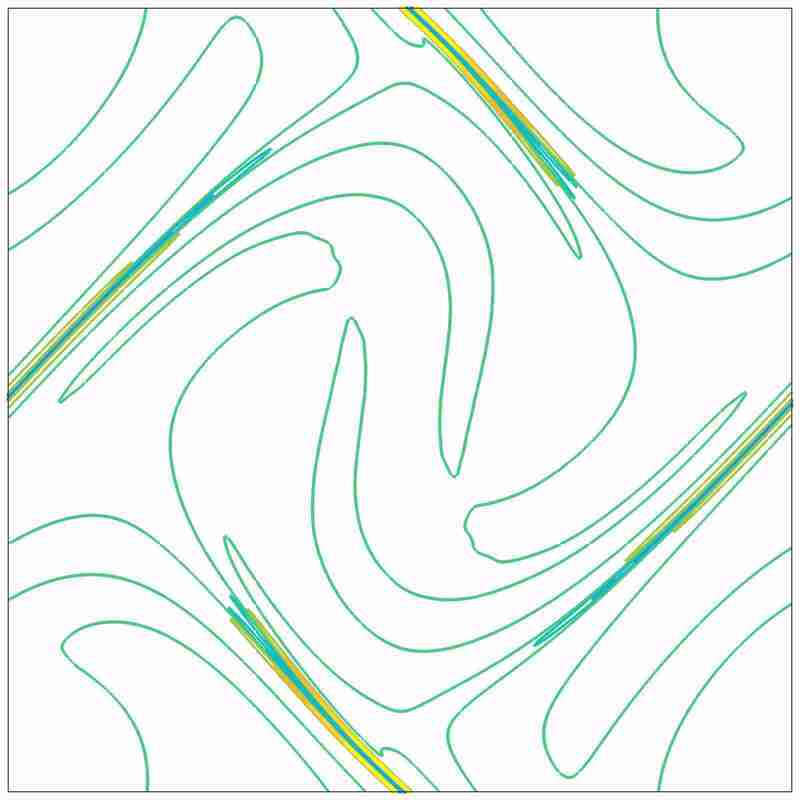}
\caption{$t=4$}
\label{subfig:LW4}
\end{subfigure}
\begin{subfigure}{0.21\linewidth}
\centering
\includegraphics[width = \linewidth]{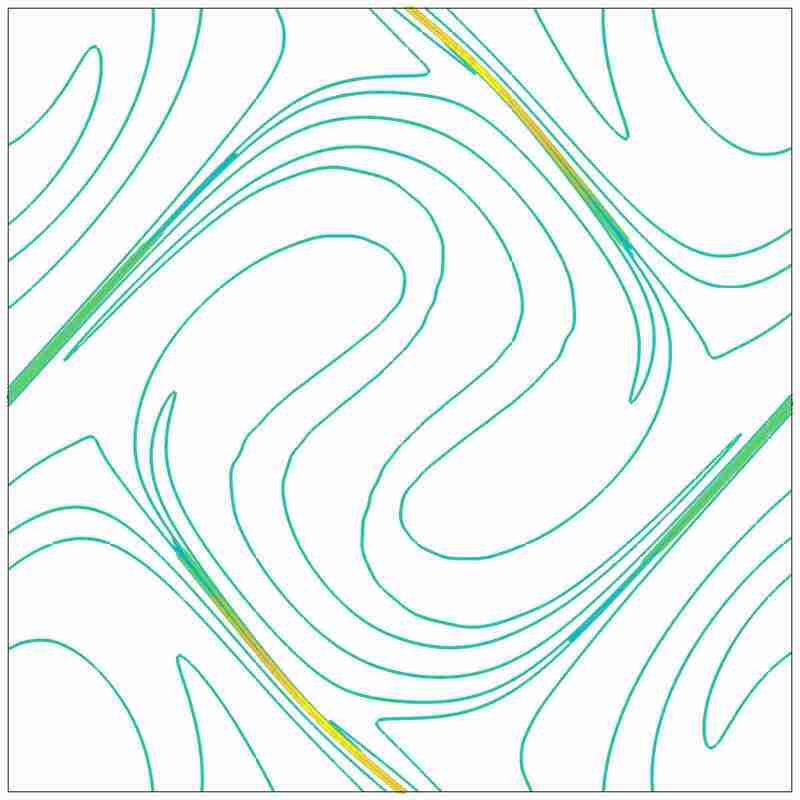}
\caption{$t=5$}
\label{subfig:LW5}
\end{subfigure}
\begin{subfigure}{0.21\linewidth}
\includegraphics[width = \linewidth]{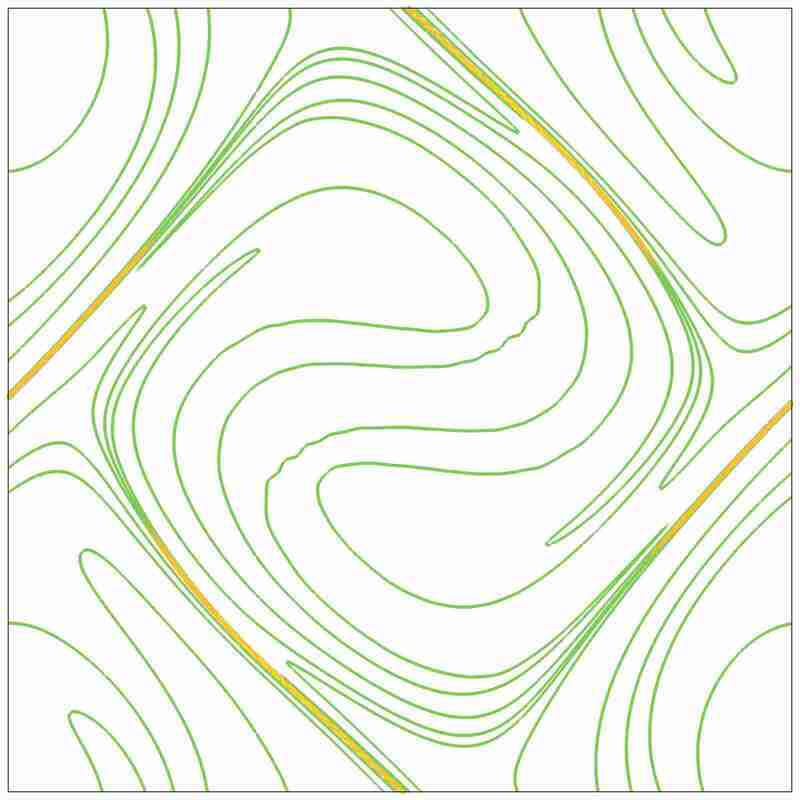}
\caption{$t=6$}
\label{subfig:LW6}
\end{subfigure}
\begin{subfigure}{0.21\linewidth}
\includegraphics[width = \linewidth]{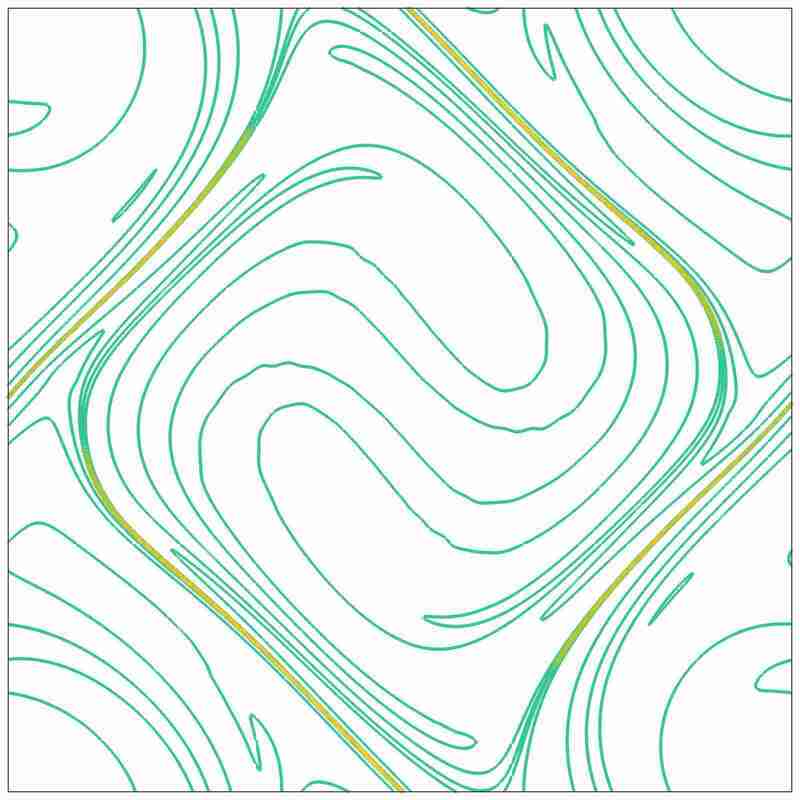}
\caption{$t=7$}
\label{subfig:LW7}
\end{subfigure}
\begin{subfigure}{0.21\linewidth}
\includegraphics[width = \linewidth]{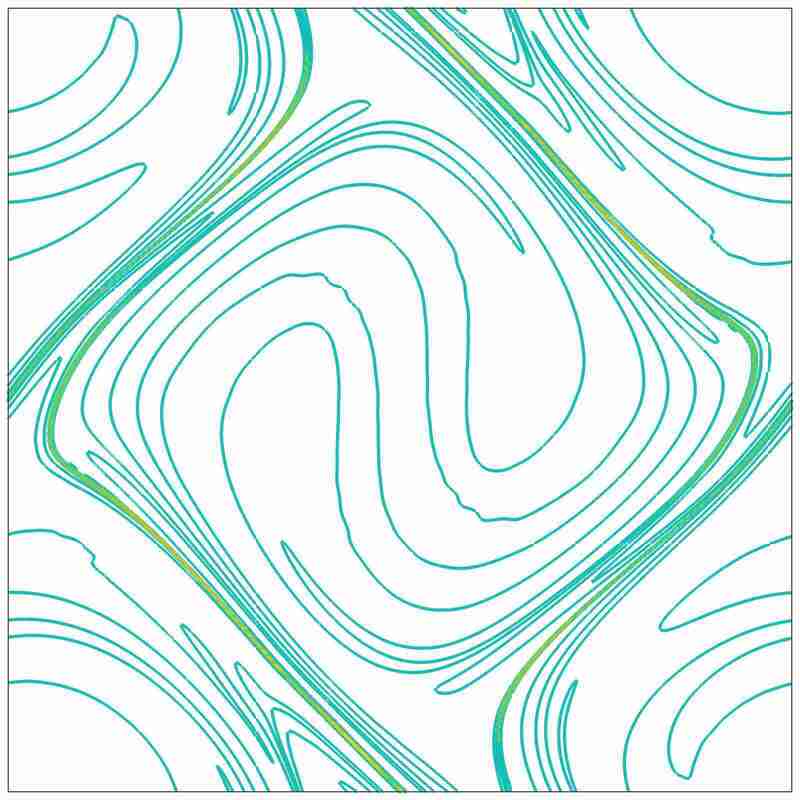}
\caption{$t=8$}
\label{subfig:LW8}
\end{subfigure}
\caption{Contour plot of the Laplacian of the vorticity using $128^2$ grid for $\vhX^n$, $512^2$ grid for representing $\psi^n$, $\incr{t} = 1/32$ and $\delta_{\det} = 10^{-4}$.}
\label{fig:LVortContour}
\end{figure}
From figures \ref{fig:vortContours} and \ref{fig:LVortContour}, we see that for larger times, the flow forms very thin vortex sheets where the two vortices meet. These regions have high vortex gradient and present increasingly fine scale features. For methods employing a fixed grid to represent the solution $\omega^n$, these fine features will eventually become smaller than the grid resolution after which they are lost. This can be interpreted as numerical diffusion associated to the grid size and eventually destroys sharp features of the solution. In the case of conservative high resolution methods such as \cite{frisch}, the truncation of high frequency modes can provoke resonance in the solution leading to a type of spurious oscillations called ``Tygers'': essentially, for a given spatial resolution the numerical solution will reach a time after which numerical artefacts become visible and the solution becomes unstable.

The CM method circumvents this issue by obtaining the solution as a ``rearrangement'' of the initial condition using the backward map. The lack of spatial resolution due to the discrete representation of the map does not result in a diffusive type error in the vorticity. This can be observed in figures \ref{fig:vortContours} and \ref{fig:LVortContour} where we can see that for large times, the solution still contains fine scale features and there are no spurious oscillations, implying that we do not incur diffusive type error in $\omega^n$. There is however a diffusive type error in $\vhX$ akin to an elasticity term. From the point of view of vorticity, the effect of this diffusive error in the map is that the vorticity will be transported along a less violent flow. This error is controlled by the remapping routine. Normally, if we evolve a single characteristic map on a $128^2$ grid, the accumulated diffusive error will prevent sharp deformations to form. Using the remapping method with $\delta_{\det} = 10^{-4}$ we limit the amount of diffusive error in each submap. The global map is constructed from the composition of the submaps and hence is able to represent large shears and the formation of thin vortex sheets. Indeed, we can see from the results that the vorticity develops scales much finer than the $128^2$ grid used for the submap evolution. These scales were absent in the initial condition and are generated from the domain deformation represented by the composition of several submaps.

Another advantage of the remapping routine is that it offers some control over the growth of the enstrophy conservation error. Indeed, since each additional submap transports the vorticity at the previous remapping time, we incur the conservation error in corollary \ref{cor:enstrophy} with respect to the enstrophy at the previous remapping. This means that the enstrophy error accumulates additively when remapping. The error from each submap is controlled through the choice of the remapping tolerance $\delta_{\det}$, thereby providing better long term conservation. The enstrophy and energy conservation errors are shown in table \ref{tab:consError_T8}.
\begin{table}[h] \footnotesize
\begin{center}
{\renewcommand{\arraystretch}{1.5}
\begin{tabular}{c | c c c c }
\hline
\hline
$t$ & 1 & 2 & 3 & 4  \\
\hline
Enstrophy & $1.35 \cdot 10^{-6}$ & $2.75 \cdot 10^{-6}$ & $4.39 \cdot 10^{-6}$ & $6.13 \cdot 10^{-6}$  \\
Energy & $-3.21 \cdot 10^{-8}$ & $-4.14 \cdot 10^{-8}$ & $-5.17 \cdot 10^{-8}$ & $4.66 \cdot 10^{-8}$ \\
\hline
\hline
$t$ & 5 & 6 & 7 & 8 \\
\hline
Enstrophy & $7.90 \cdot 10^{-6}$ & $9.76 \cdot 10^{-6}$ & $1.17 \cdot 10^{-5}$ & $1.37 \cdot 10^{-5}$ \\
Energy & $-3.51 \cdot  10^{-7}$ & $-1.64 \cdot 10^{-6}$ & $-3.98 \cdot 10^{-6}$ & $-7.51 \cdot 10^{-6}$ \\
\hline
\end{tabular}} \\
\end{center}
\caption{Conservation errors for the 4-modes test using the CM method.}
\label{tab:consError_T8}
\end{table}

Compared to a direct grid based representation of the vorticity, this growth is much slower. Indeed, in \cite{frisch}, using the $8^{th}$ order Cauchy-Lagragian method on a $1024^2$ grid, the enstrophy error increases from $10^{-14}$ to $10^{-12}$ to $10^{-6}$ for times $1$, $3$ and $5$, whereas for the CM method, the enstrophy error seems to grow linearly with time.

The remapping method combined with the functional representation of the characteristic map offers the possibility of arbitrary spatial resolution of the solution. Indeed, since the interpolation structure of the submaps implies that the global map $\vhX_{[t, 0]}$ can be readily evaluated anywhere in the domain, if follows that the vorticity of any quantity transported by the flow can also be evaluated anywhere. With the accuracy control provided by the remapping method, this means that solutions can be faithfully represented at an arbitrary resolution. We illustrate this property by gradually zooming into the solution at times 4 and 8.

\begin{figure}[h]
\centering
\begin{subfigure}{0.21\linewidth}
\includegraphics[width = \linewidth]{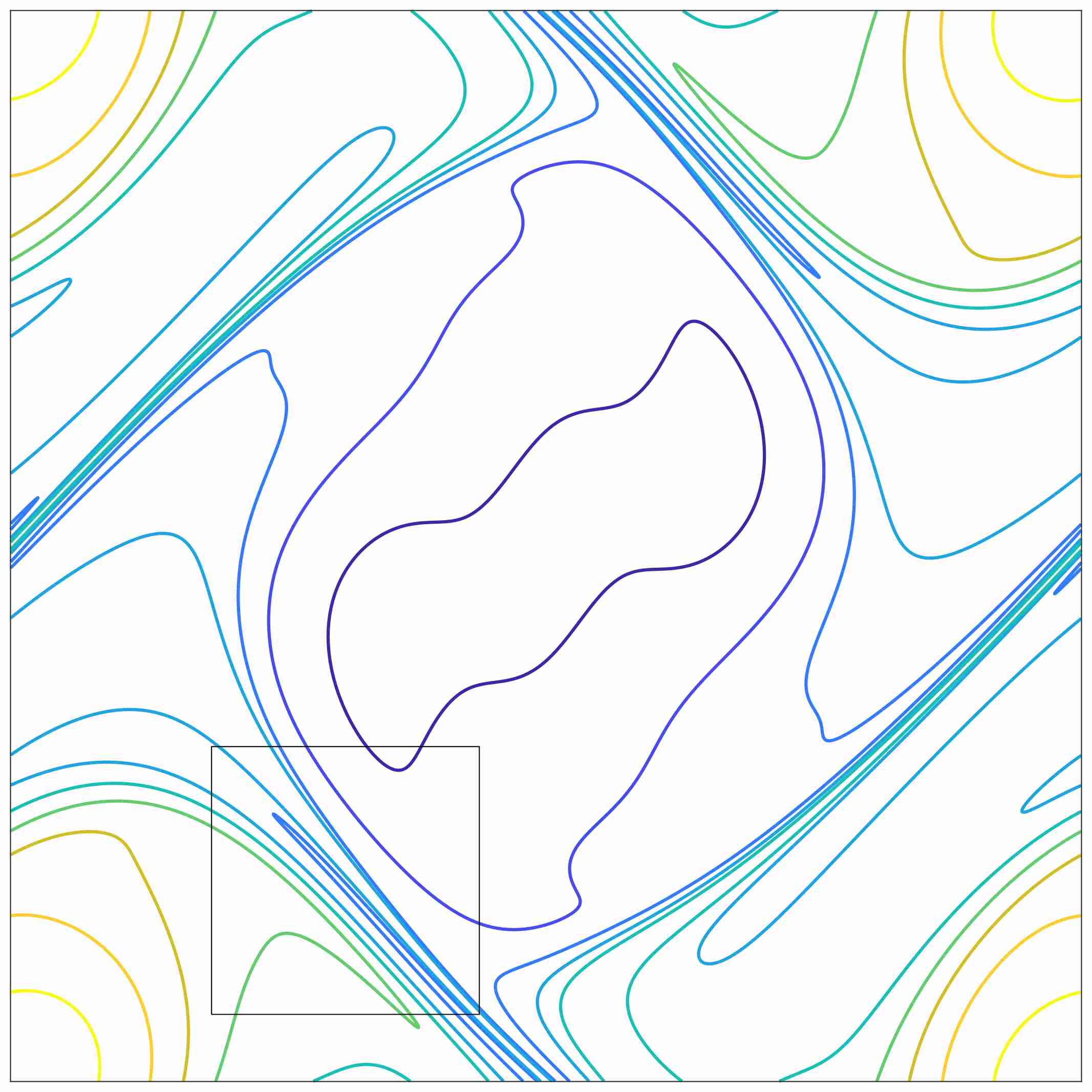}
\caption{$[0,1] \times [0, 1]$}
\label{subfig:Wz1t4}
\end{subfigure}
\begin{subfigure}{0.21\linewidth}
\includegraphics[width = \linewidth]{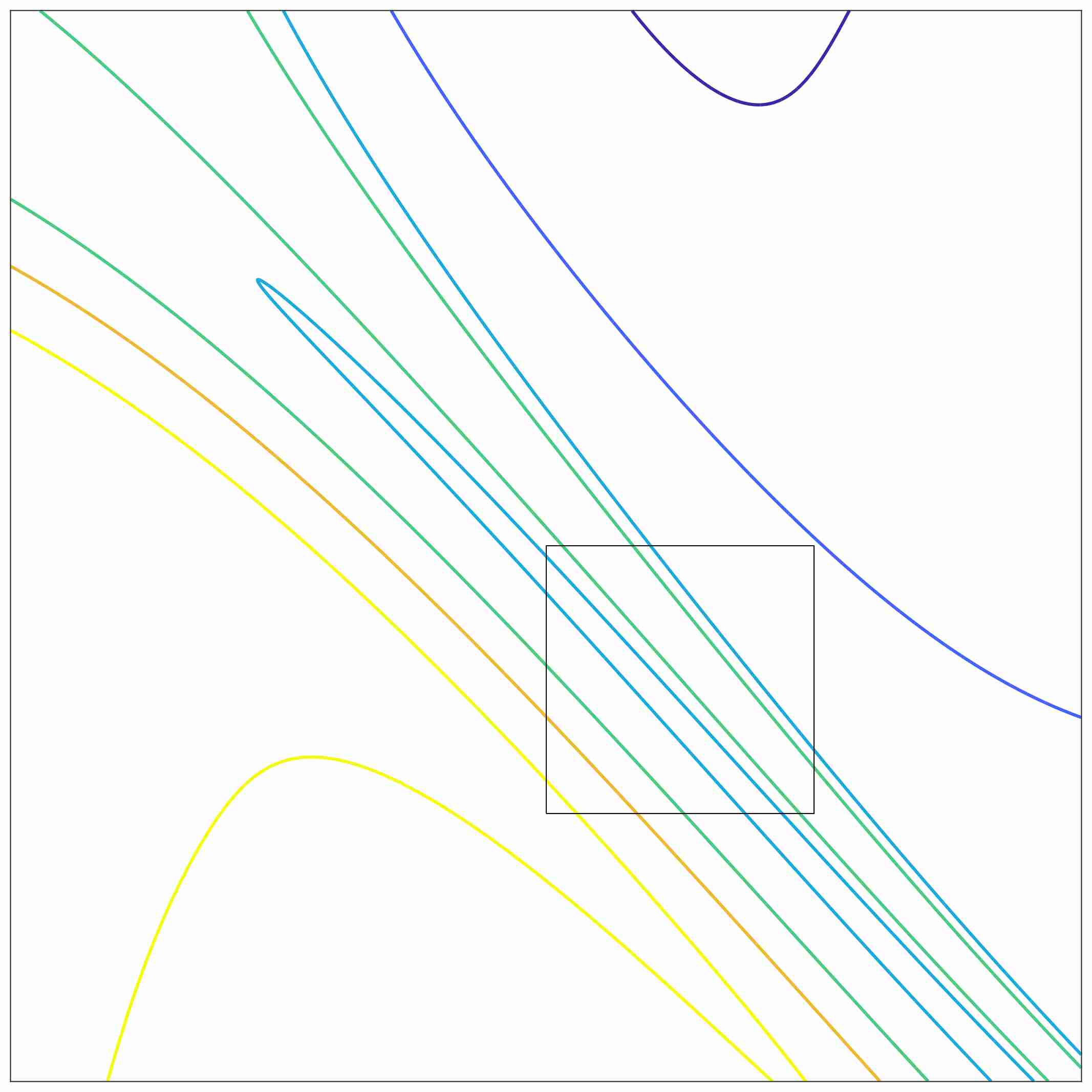}
\caption{$[\frac{3}{16},\frac{7}{16}] \times [\frac{1}{16}, \frac{5}{16}]$}
\label{subfig:Wz2t4}
\end{subfigure}
\begin{subfigure}{0.21\linewidth}
\includegraphics[width = \linewidth]{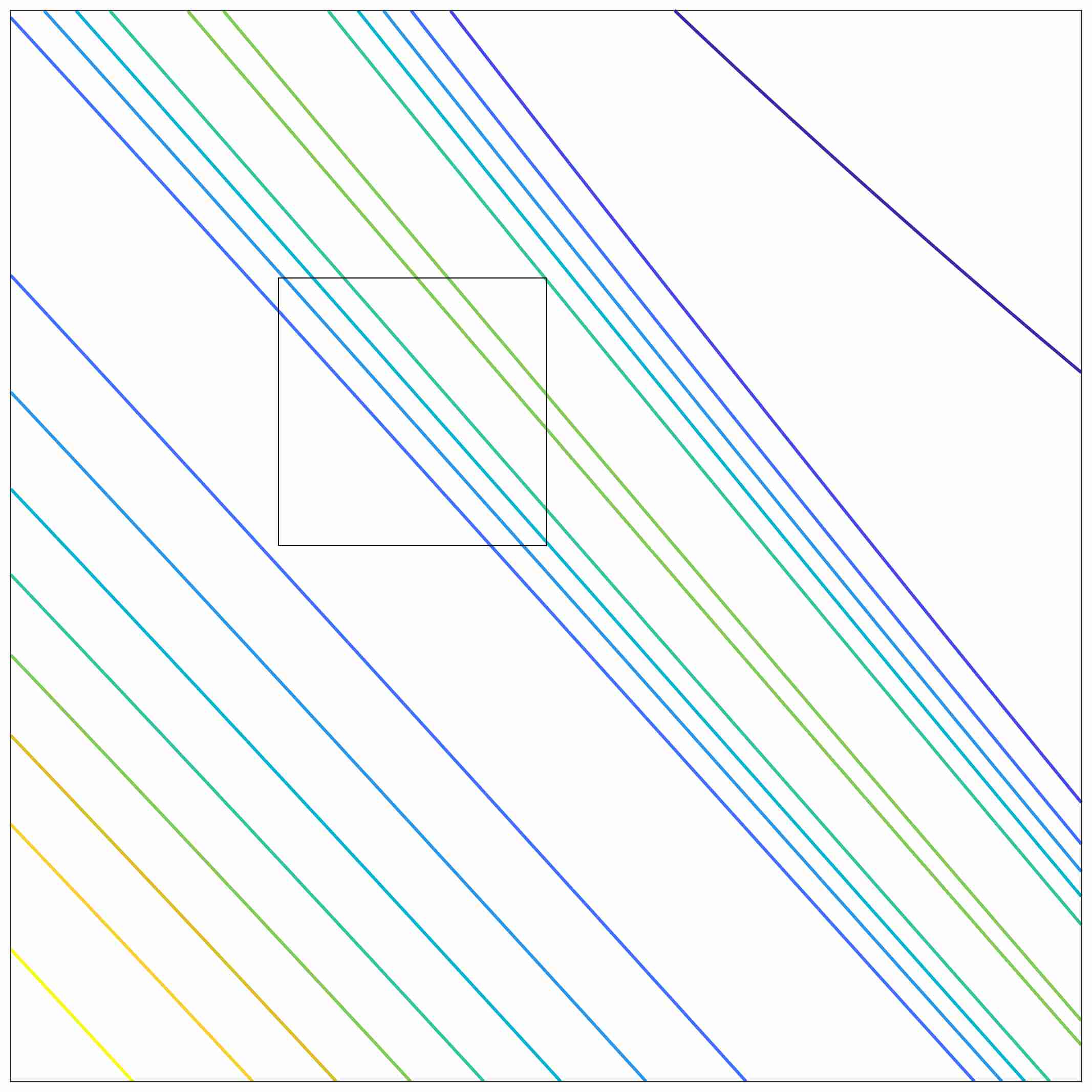}
\caption{$[\frac{5}{16}, \frac{6}{16}] \times [\frac{2}{16}, \frac{3}{16}]$}
\label{subfig:Wz3t4}
\end{subfigure}
\begin{subfigure}{0.21\linewidth}
\includegraphics[width = \linewidth]{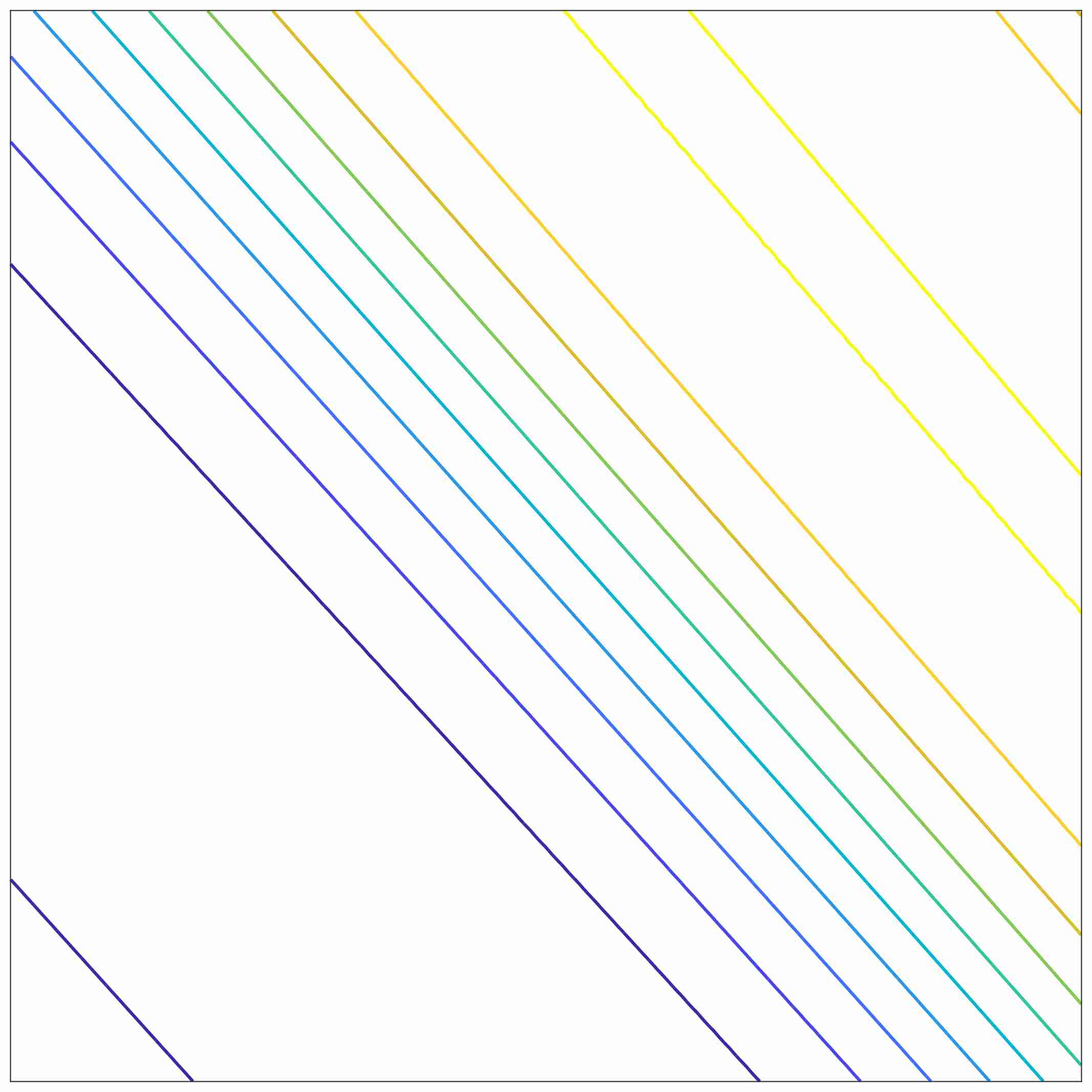}
\caption{$[\frac{21}{64}, \frac{22}{64}] \times [\frac{10}{64}, \frac{11}{64}]$}
\label{subfig:Wz4t4}
\end{subfigure}
\caption{Gradual $64\times$ zoom on the vorticity at $t=4$.}
\label{fig:Wzoom_t4}
\end{figure}

\begin{figure}[h]
\centering
\begin{subfigure}{0.21\linewidth}
\includegraphics[width = \linewidth]{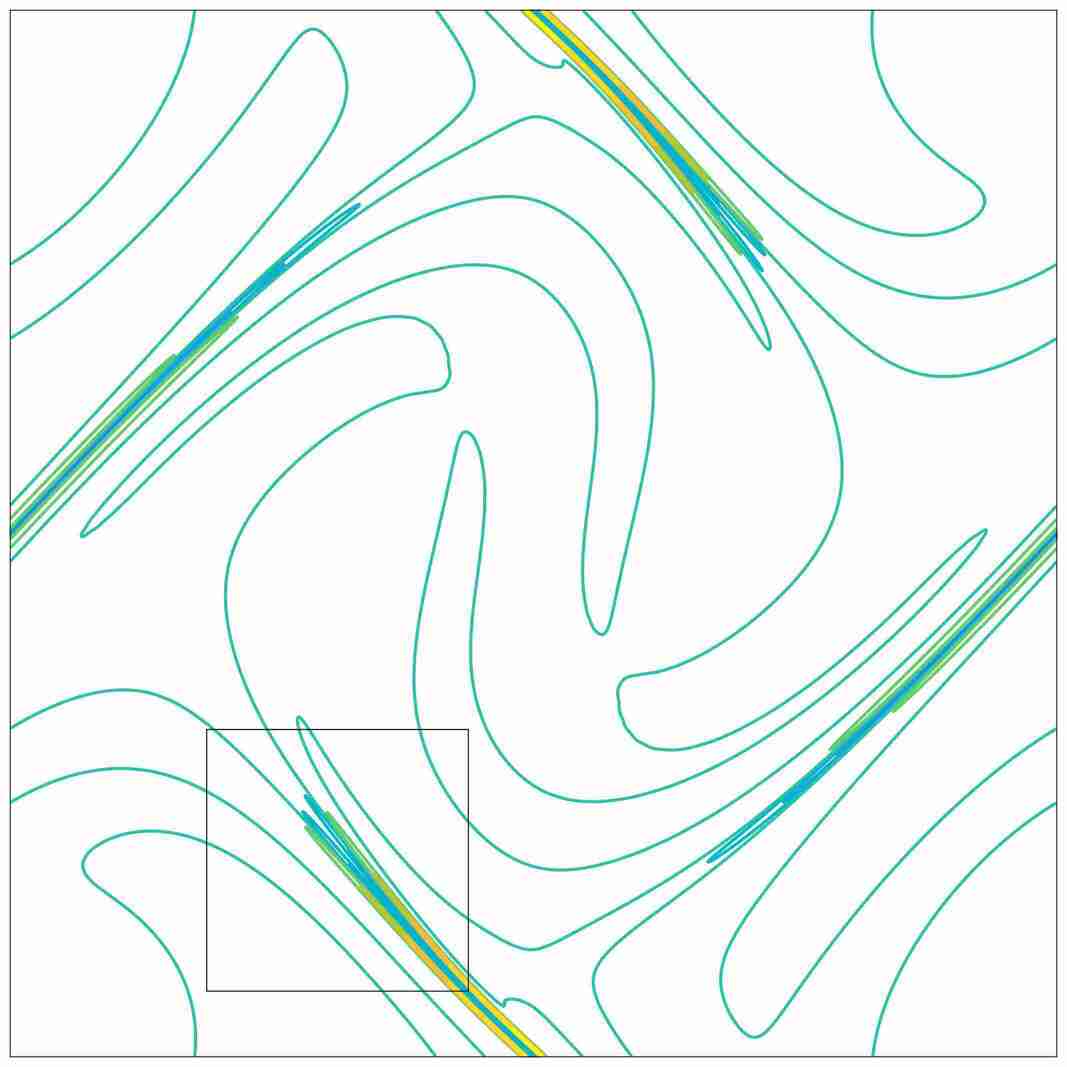}
\caption{$[0,1] \times [0, 1]$}
\label{subfig:LWz1t4}
\end{subfigure}
\begin{subfigure}{0.21\linewidth}
\includegraphics[width = \linewidth]{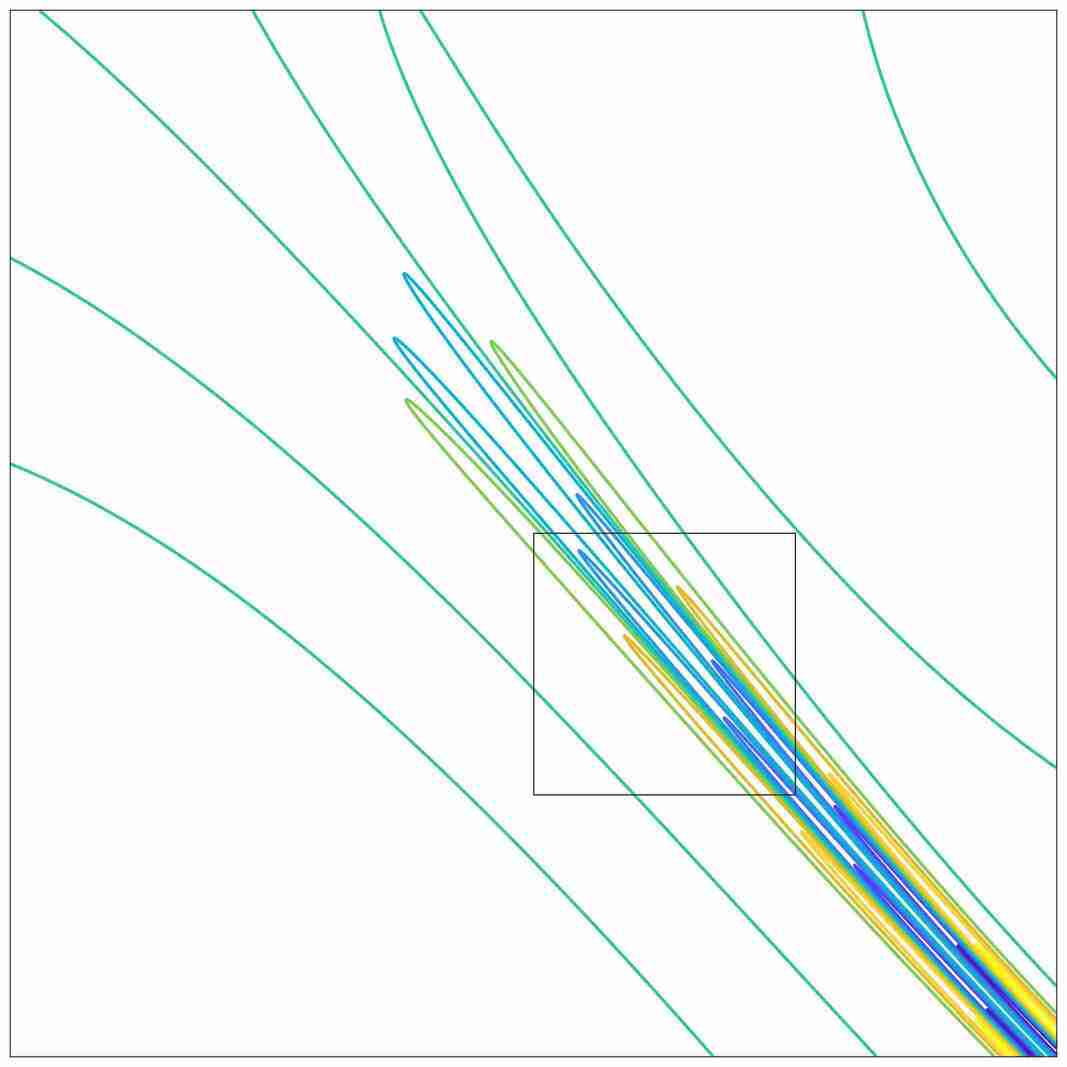}
\caption{$[\frac{3}{16},\frac{7}{16}] \times [\frac{1}{16}, \frac{5}{16}]$}
\label{subfig:LWz2t4}
\end{subfigure}
\begin{subfigure}{0.21\linewidth}
\includegraphics[width = \linewidth]{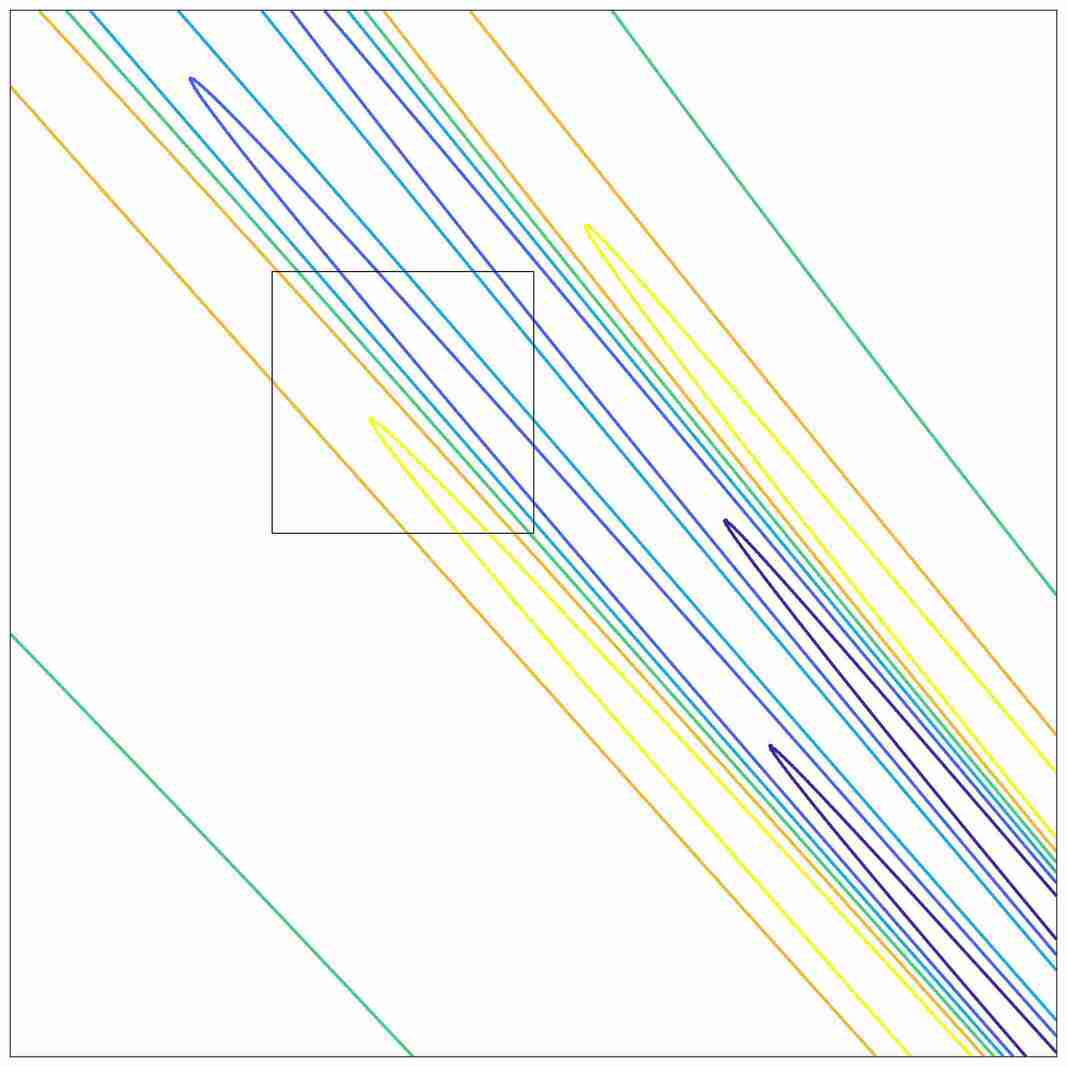}
\caption{$[\frac{5}{16}, \frac{6}{16}] \times [\frac{2}{16}, \frac{3}{16}]$}
\label{subfig:LWz3t4}
\end{subfigure}
\begin{subfigure}{0.21\linewidth}
\includegraphics[width = \linewidth]{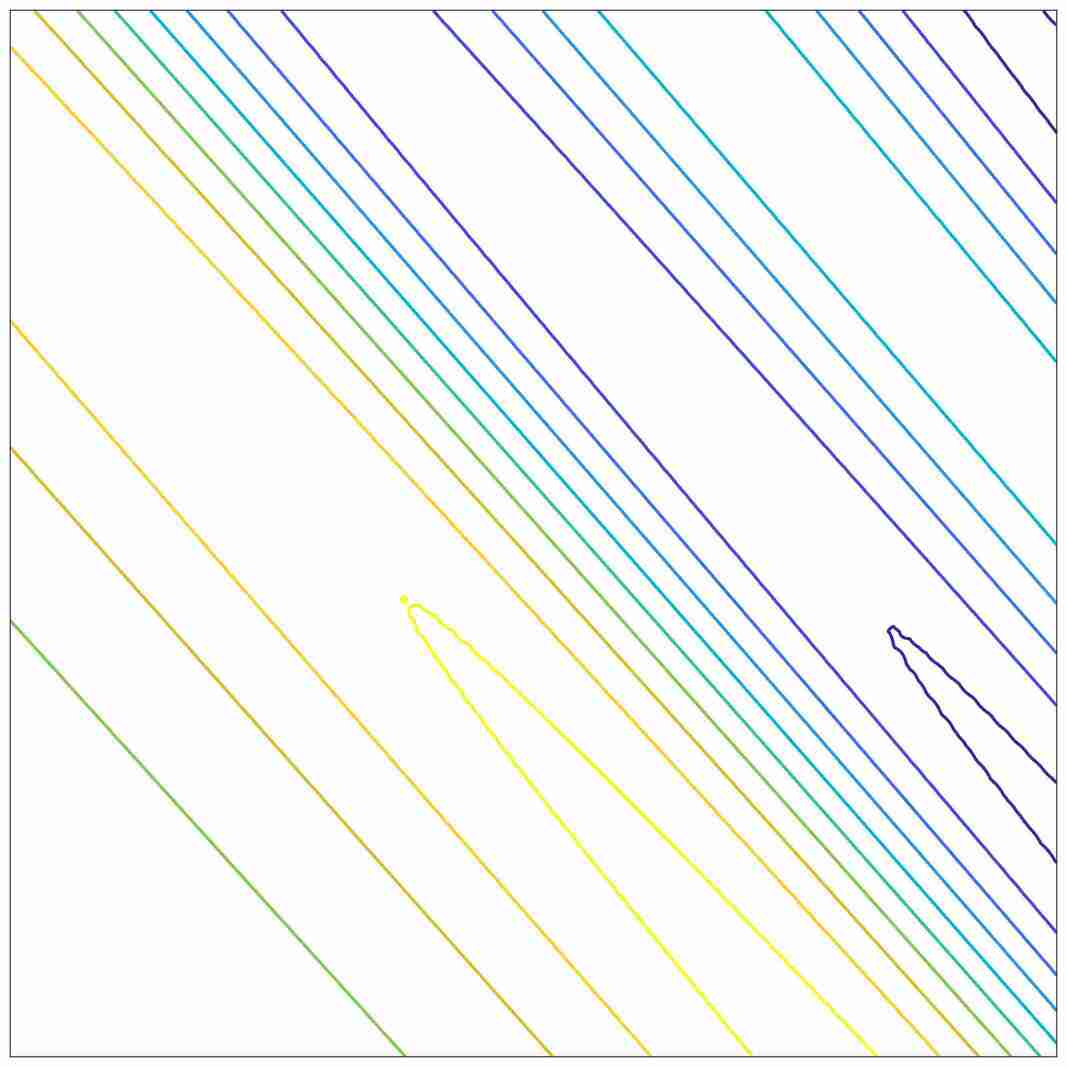}
\caption{$[\frac{21}{64}, \frac{22}{64}] \times [\frac{10}{64}, \frac{11}{64}]$}
\label{subfig:LWz4t4}
\end{subfigure}
\caption{Gradual $64\times$ zoom on the Laplacian of the vorticity at $t=4$.}
\label{fig:LWzoom_t4}
\end{figure}

\begin{figure}[h]
\centering
\begin{subfigure}{0.21\linewidth}
\includegraphics[width = \linewidth]{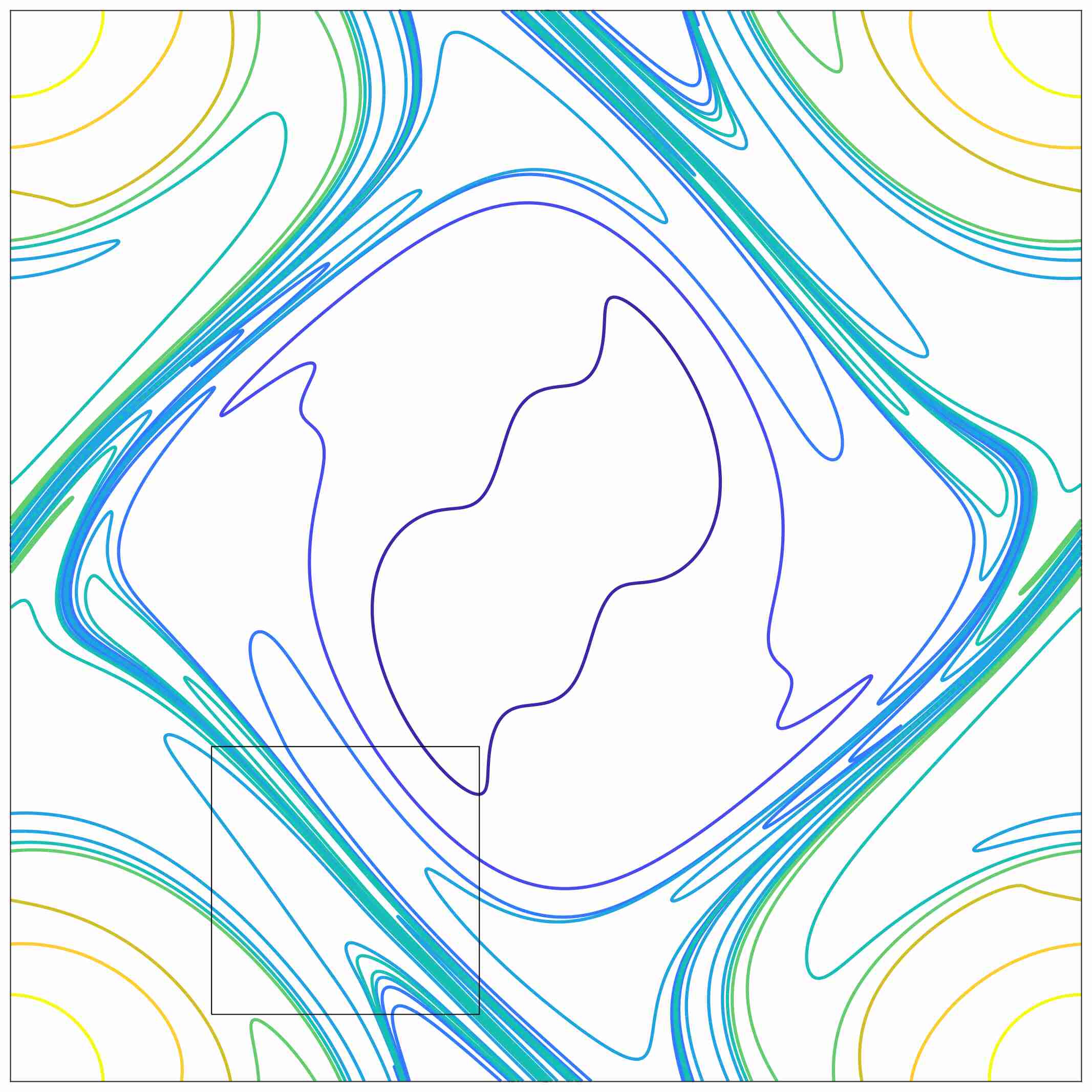}
\caption{$[0,1] \times [0, 1]$}
\label{subfig:Wz1t8}
\end{subfigure}
\begin{subfigure}{0.21\linewidth}
\includegraphics[width = \linewidth]{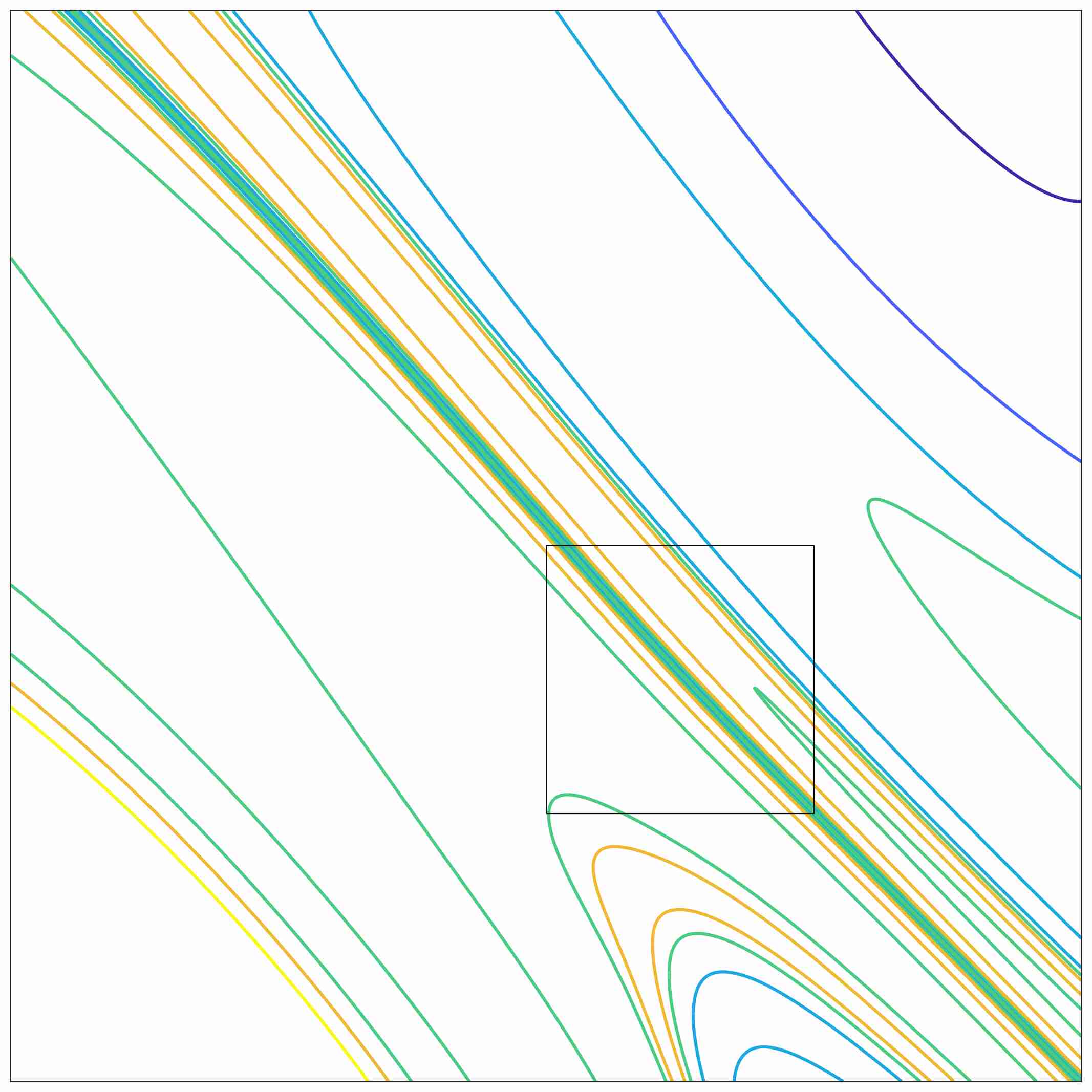}
\caption{$[\frac{3}{16},\frac{7}{16}] \times [\frac{1}{16}, \frac{5}{16}]$}
\label{subfig:Wz2t8}
\end{subfigure}
\begin{subfigure}{0.21\linewidth}
\includegraphics[width = \linewidth]{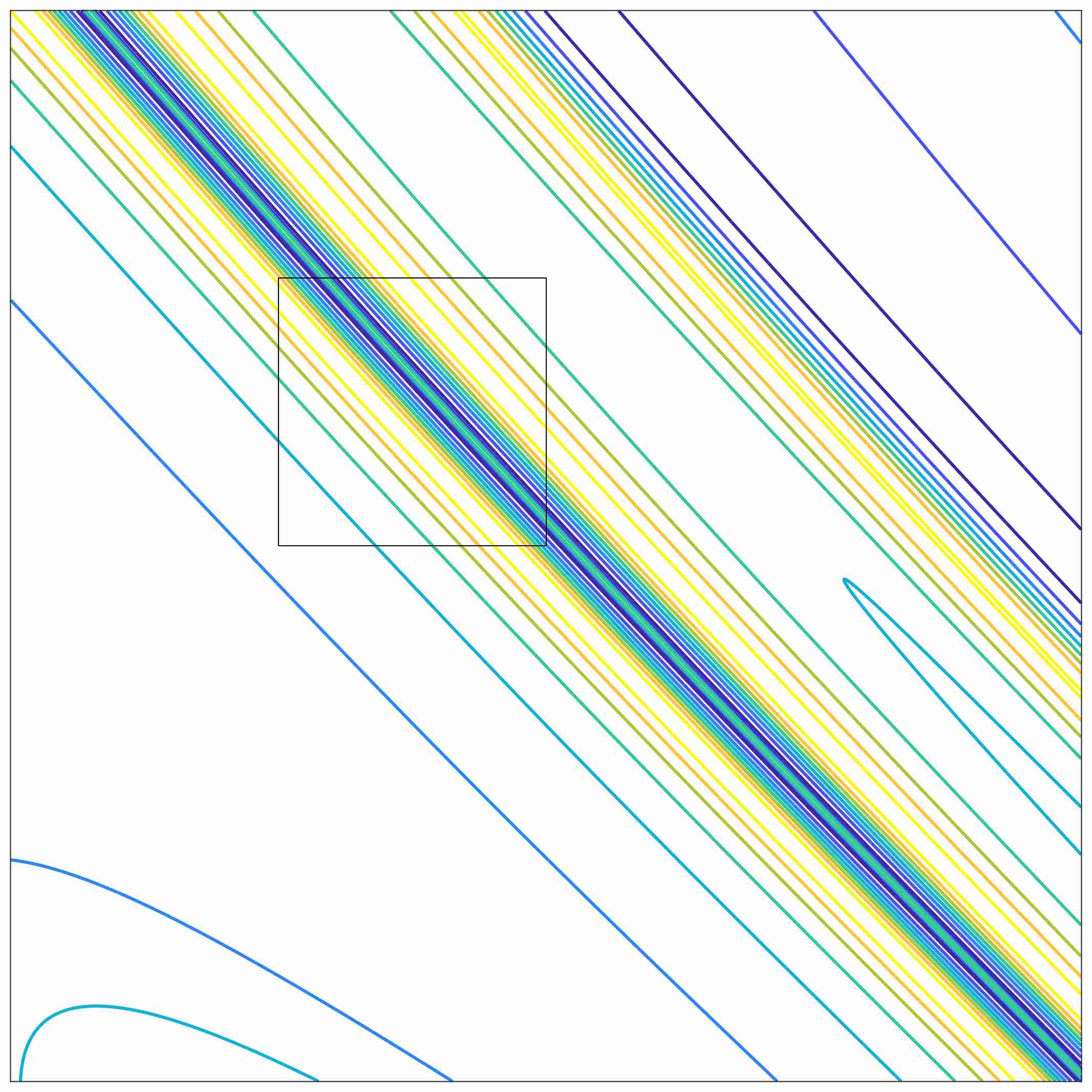}
\caption{$[\frac{5}{16}, \frac{6}{16}] \times [\frac{2}{16}, \frac{3}{16}]$}
\label{subfig:Wz3t8}
\end{subfigure}
\begin{subfigure}{0.21\linewidth}
 \includegraphics[width = \linewidth]{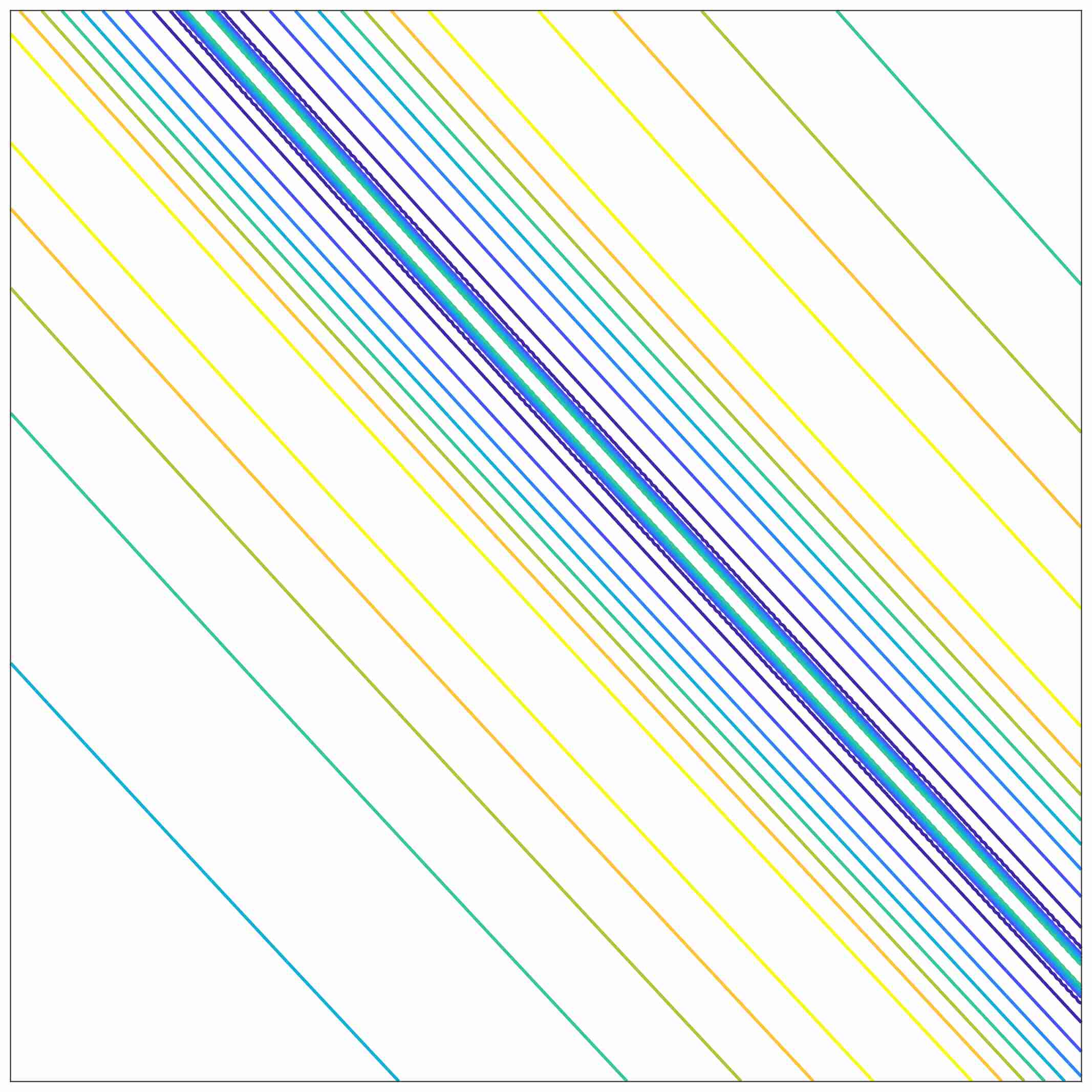}
\caption{$[\frac{21}{64}, \frac{22}{64}] \times [\frac{10}{64}, \frac{11}{64}]$}
\label{subfig:Wz4t8}
\end{subfigure}
\caption{Gradual $64\times$ zoom on the vorticity at $t=8$.}
\label{fig:Wzoom_t8}
\end{figure}

\begin{figure}[h]
\centering
\begin{subfigure}{0.21\linewidth}
\includegraphics[width = \linewidth]{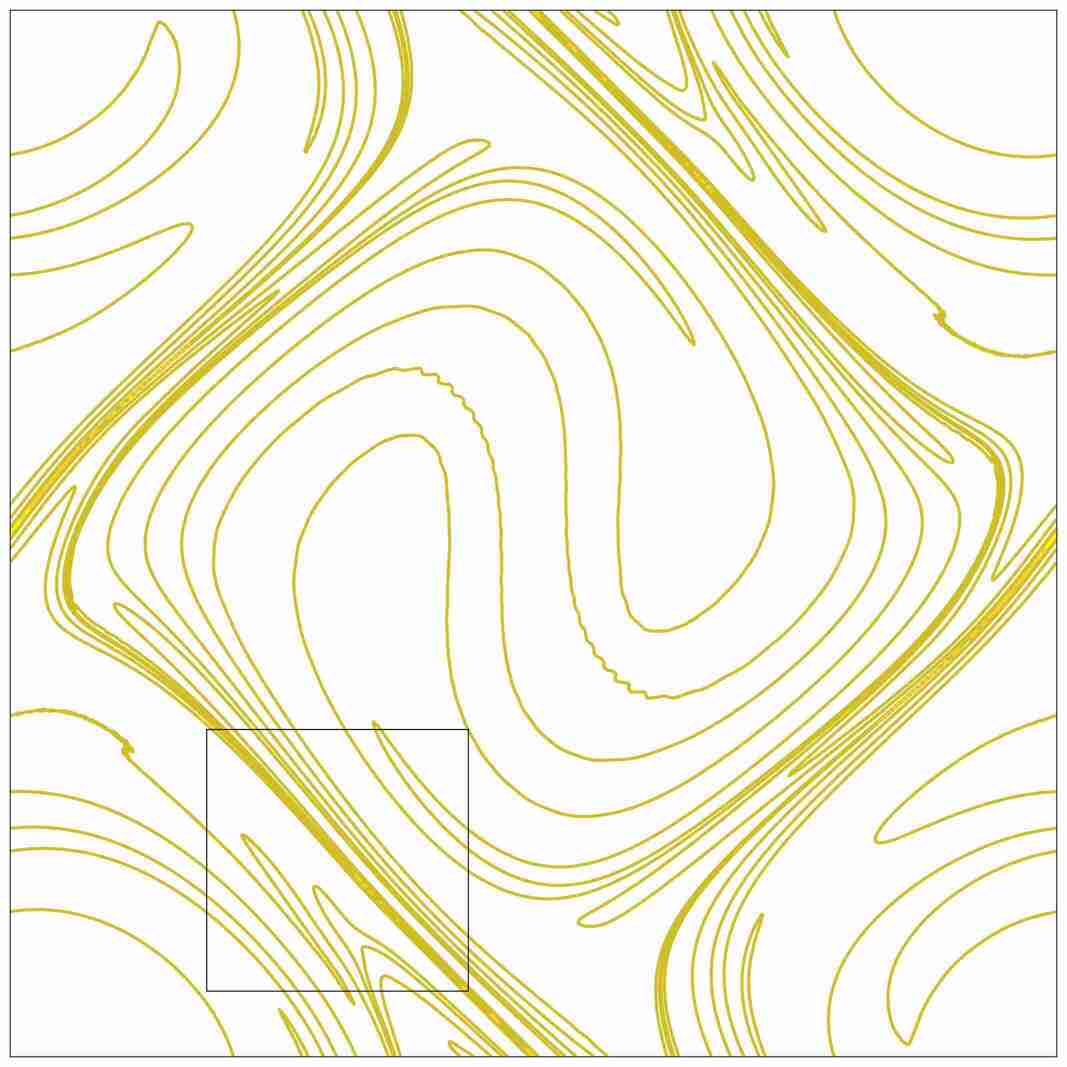}
\caption{$[0,1] \times [0, 1]$}
\label{subfig:LWz1t8}
\end{subfigure}
\begin{subfigure}{0.21\linewidth}
\includegraphics[width = \linewidth]{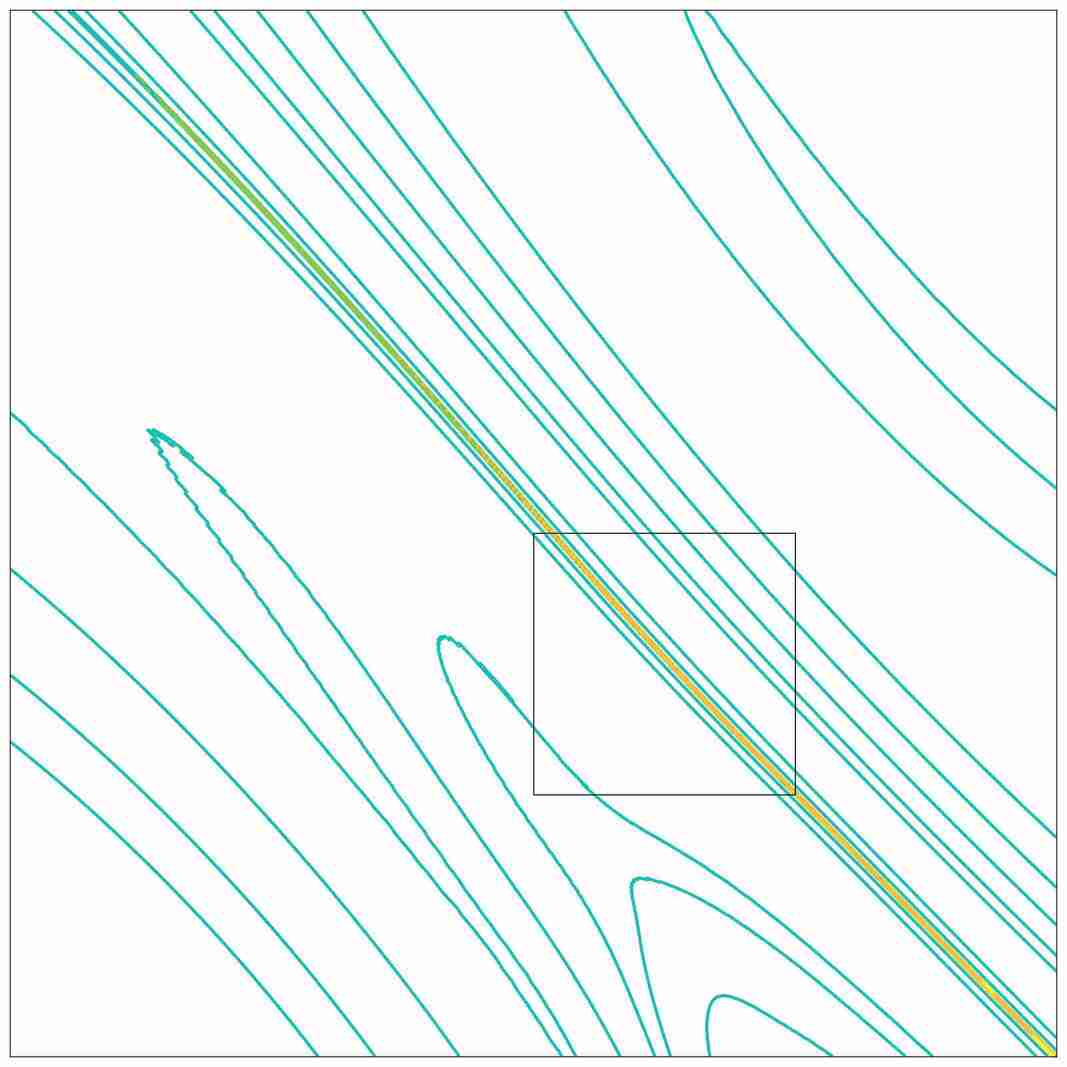}
\caption{$[\frac{3}{16},\frac{7}{16}] \times [\frac{1}{16}, \frac{5}{16}]$}
\label{subfig:LWz2t8}
\end{subfigure}
\begin{subfigure}{0.21\linewidth}
\includegraphics[width = \linewidth]{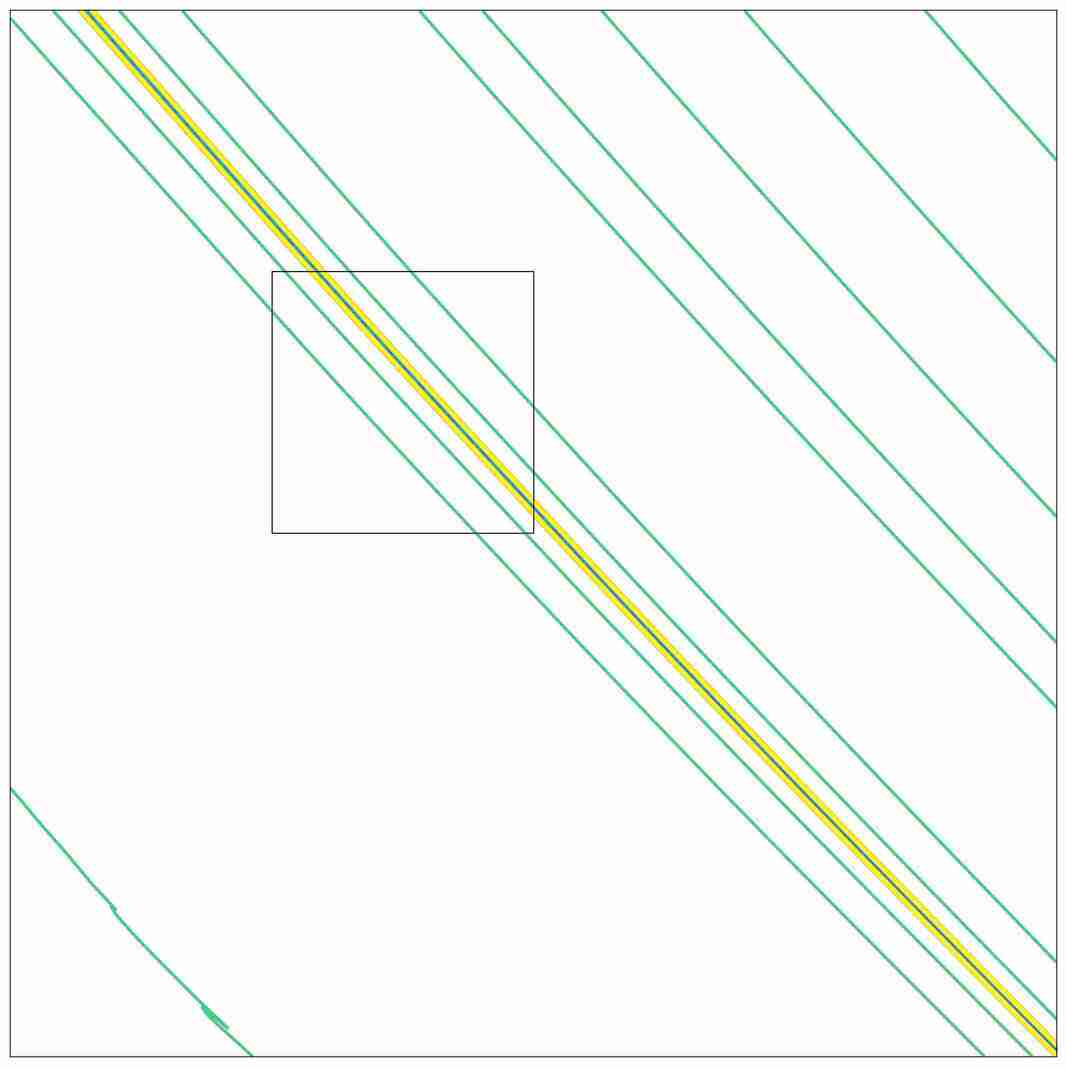}
\caption{$[\frac{5}{16}, \frac{6}{16}] \times [\frac{2}{16}, \frac{3}{16}]$}
\label{subfig:LWz3t8}
\end{subfigure}
\begin{subfigure}{0.21\linewidth}
 \includegraphics[width = \linewidth]{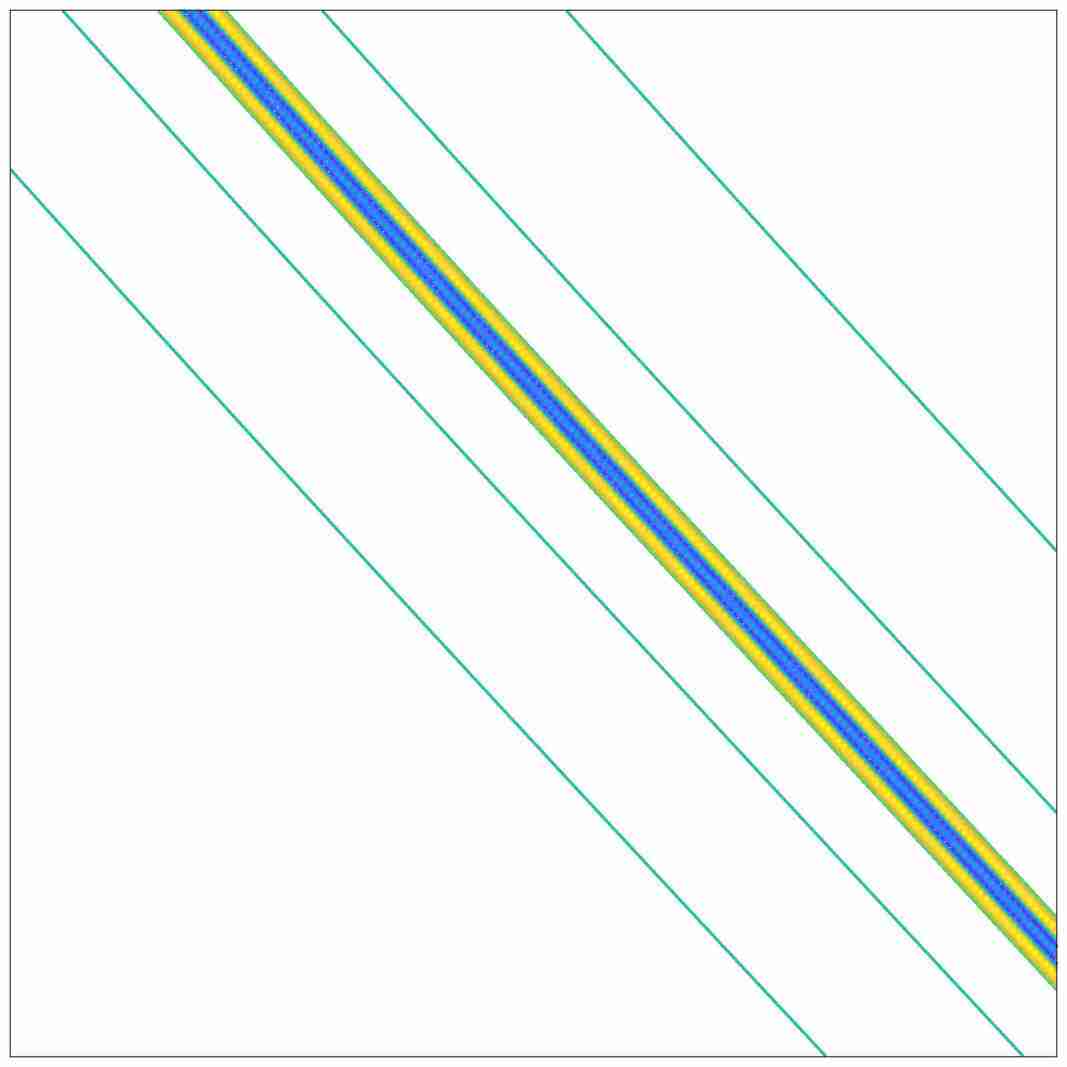}
\caption{$[\frac{21}{64}, \frac{22}{64}] \times [\frac{10}{64}, \frac{11}{64}]$}
\label{subfig:LWz4t8}
\end{subfigure}
\caption{Gradual $64\times$ zoom on the Laplacian of the vorticity at $t=8$.}
\label{fig:LWzoom_t8}
\end{figure}

Figures \ref{fig:Wzoom_t4}, \ref{fig:LWzoom_t4}, \ref{fig:Wzoom_t8} and \ref{fig:LWzoom_t8} illustrate the arbitrary spatial resolution provided by the characteristic map. Each zoomed plot is sampled with the same number of sample points, providing the same image resolution on gradually smaller subdomains. We observe that when we zoom in, we recover additional small scale features not seen on the original plot. The undersampling from the $[0, 1] \times [0, 1]$ plot fails to represent the complexity of the contour lines inside the thin vortex sheet (see figures \ref{subfig:LWz1t8} v.s. \ref{subfig:LWz1t8}). These features are recovered when using a finer sampling. This in fact shows that the solution provided by the CM method is not bound to a fixed set of sample points. Whereas in most methods, once a grid is chosen, any detail finer than this grid is lost, the CM method does not compute the solution on a fixed grid, rather it provides an algorithm to sample the vorticity field $\omega^n = \omega_0 \circ \vhX^n$ defined as a function over the whole domain. This means that the solution can be evaluated anywhere, providing arbitrary spatial resolution. In practice, in case $\omega_0$ is given numerically, this implies that we maintain the same resolution as that of $\omega_0$ throughout the entire simulation: there is no loss of spatial features.

\subsection{Random initial conditions}

In this section, we perform the same tests as in section \ref{subsec:num4modes} on a randomly generated initial condition. The procedure to generate the random initial condition is given in \cite{ray2011resonance}. In short, the vorticity is defined in Fourier space, which is divided into lattice shells, each containing all modes $\bm{k}$ such that $| \bm{k} | \in [K, K+1)$. The $K^{\text{th}}$ shell contains $N(K)$ modes, and for each of these modes, we assign a vorticity Fourier coefficient $\hat{\omega}_{\bm{k}}$ of fixed modulus $2 K^{7/2} \exp (-K^2/4)/N(K)$ and a phase picked randomly from $[0, 2 \pi)$ with uniform distribution. This guarantees that the total vorticity in the $K^{\text{th}}$ shell decays like $2 K^{7/2} \exp (-K^2/4)$. Further, to ensure that the vorticity is real, opposite wave vectors (i.e. $\hat{\omega}_{\bm{k}}$ and $\hat{\omega}_{-\bm{k}}$) are given opposite phases so that the resulting Fourier expansion is Hermitian. The same test was performed in \cite{frisch} until $t=1$ using the Cauchy-Lagrange methods with $2048^2$ spatial Fourier modes.

\begin{figure}[h]
\centering
\begin{subfigure}{0.21\linewidth}
\centering
\includegraphics[width = \linewidth]{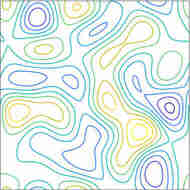}
\caption{$\omega_0$}
\label{subfig:rw0}
\end{subfigure}
\begin{subfigure}{0.21\linewidth}
\centering
\includegraphics[width = \linewidth]{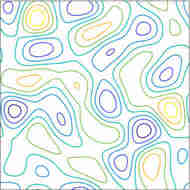}
\caption{$\Laplace \omega_0$}
\label{subfig:Lrw0}
\end{subfigure}
\caption{Contour plot of the random initial vorticity and its Laplacian.}
\label{fig:initVortRand}
\end{figure}

Since the CM method does not work directly with the Fourier transform of the initial vorticity, we defined our initial $\omega_0$ as follows: we first generate $\hat{\omega}_{\bm{k}}$ as described above, we then sample the Fourier series on a $512^2$ grid to obtain a Hermite cubic interpolant which we use as $\omega_0$. We used total of 32 lattice shells, i.e. $K = 0, 1, \ldots 32$; in fact, due to the prescribed decay rate of the coefficients, $| \hat{\omega}_{\bm{k}} |$ is already well below machine precision for $| \bm{k} | = 32$ and is below machine underflow for 64. Using more shells would have no consequence on our $\omega_0$. The initial vorticity and its Laplacian are shown in figure \ref{fig:initVortRand}. We ran the CM method on this initial condition using a $256^2$ grid for the map, $1024^2$ grid to represent $\psi^n$, $\incr{t} = 1/64$ and $\delta_{\det} = 10^{-4}$. Contour plots of the vorticity field and of the its Laplacian are shown at 0.5 time intervals in figures \ref{fig:vortContours_rand} and \ref{fig:LVortContour_rand}. The number of remaps and CPU times required are shown in table \ref{tab:rmrt_rand}.
\begin{table}[h] \footnotesize
\begin{center}
{\renewcommand{\arraystretch}{1.5}
\begin{tabular}{c | c c c c}
\hline
\hline
$t$ & 0.5 & 1 & 1.5 & 2 \\
\hline
Number of remaps & 3 & 18 & 38 & 69  \\
Total CPU time & 233 s & 499 s & 822 s & 1226 s  \\
\hline
\end{tabular}} \\
\end{center}
\caption{Number of remaps and CPU times for the random initial condition test using the CM method.}
\label{tab:rmrt_rand}
\end{table}

\begin{figure}[h]
\captionsetup[subfigure]{justification=centering}
\centering
\begin{subfigure}{0.21\linewidth}
\centering
\includegraphics[width = \linewidth]{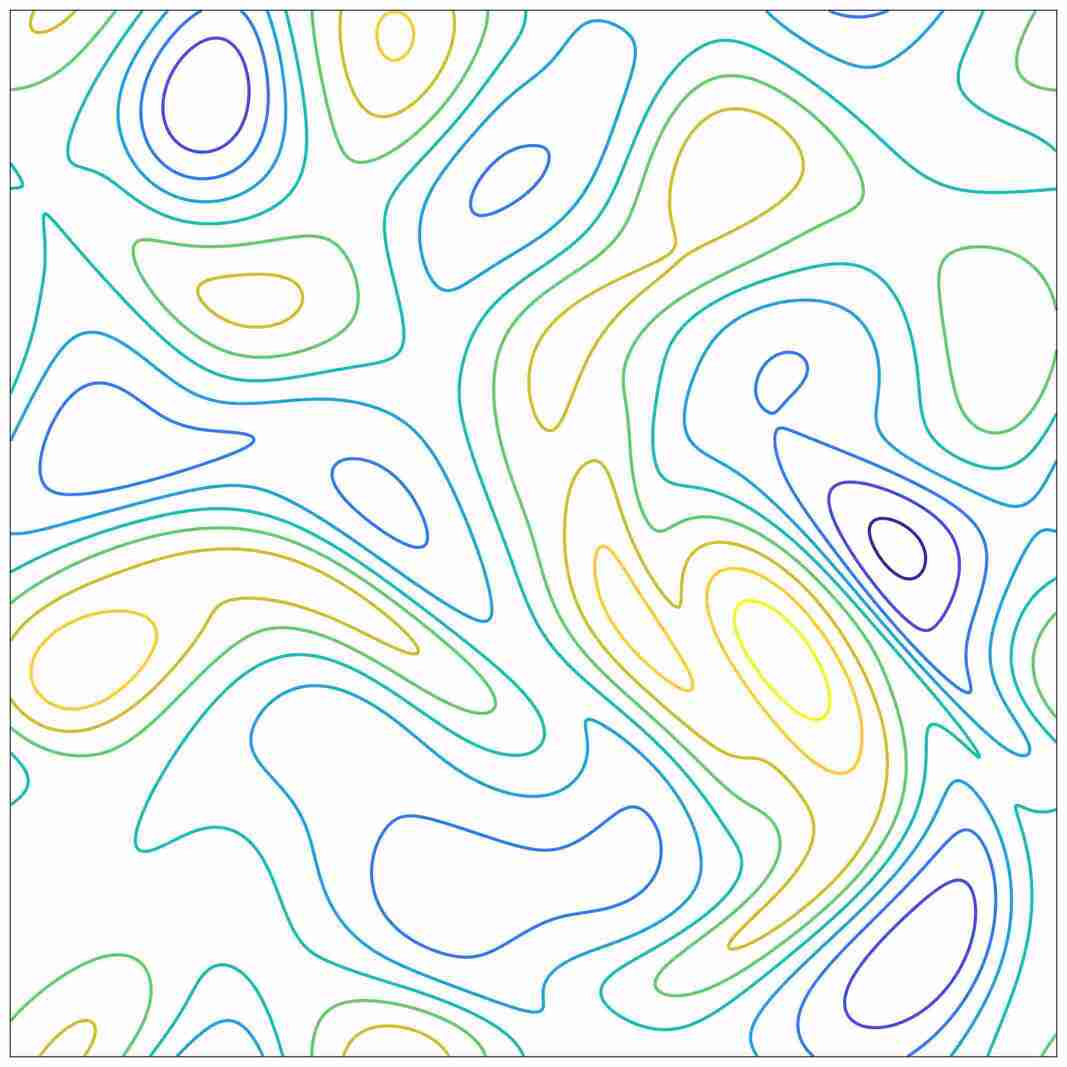}
\caption{$t=0.5$}
\label{subfig:Wr5}
\end{subfigure}
\begin{subfigure}{0.21\linewidth}
\centering
\includegraphics[width = \linewidth]{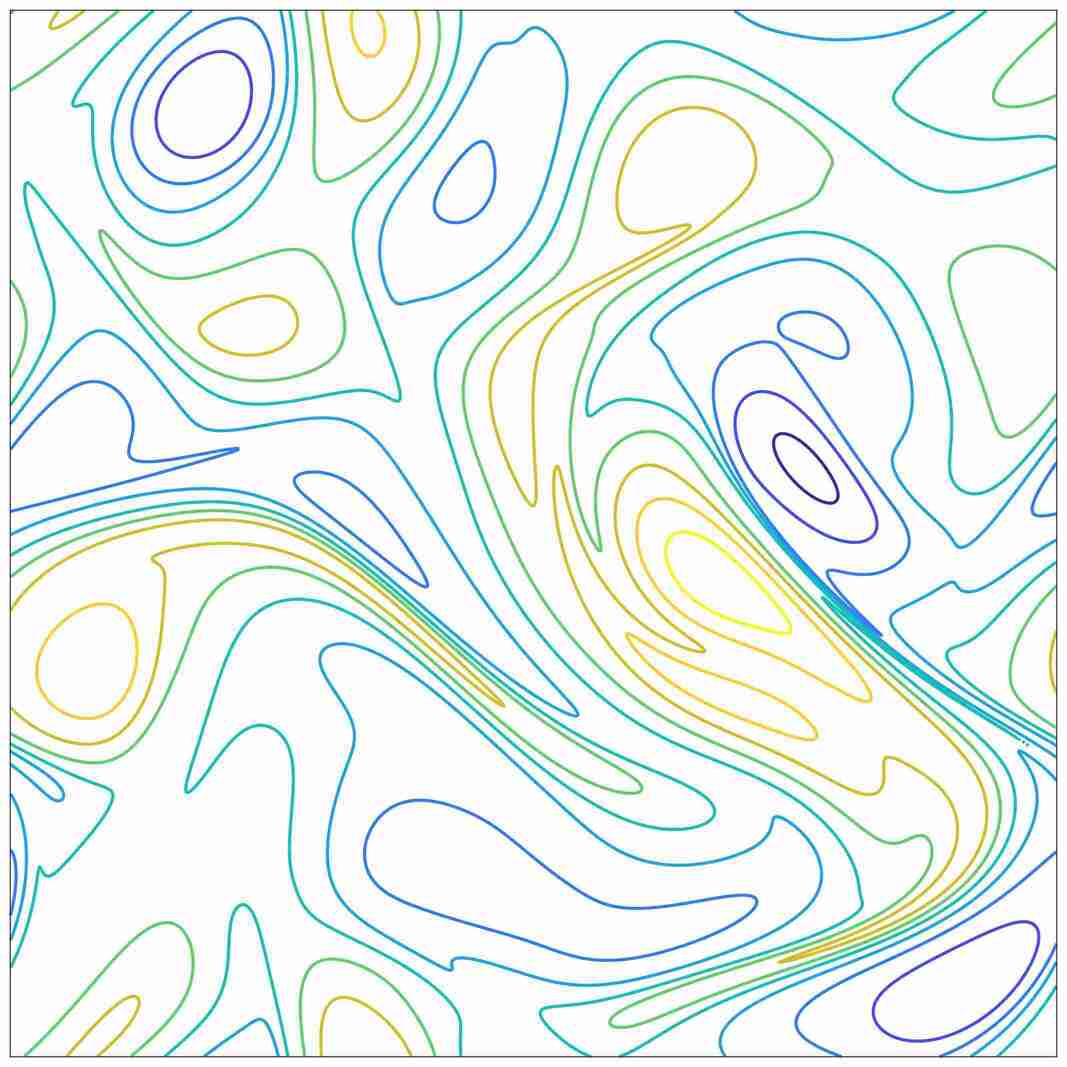}
\caption{$t=1$}
\label{subfig:Wr10}
\end{subfigure}
\begin{subfigure}{0.21\linewidth}
\centering
\includegraphics[width = \linewidth]{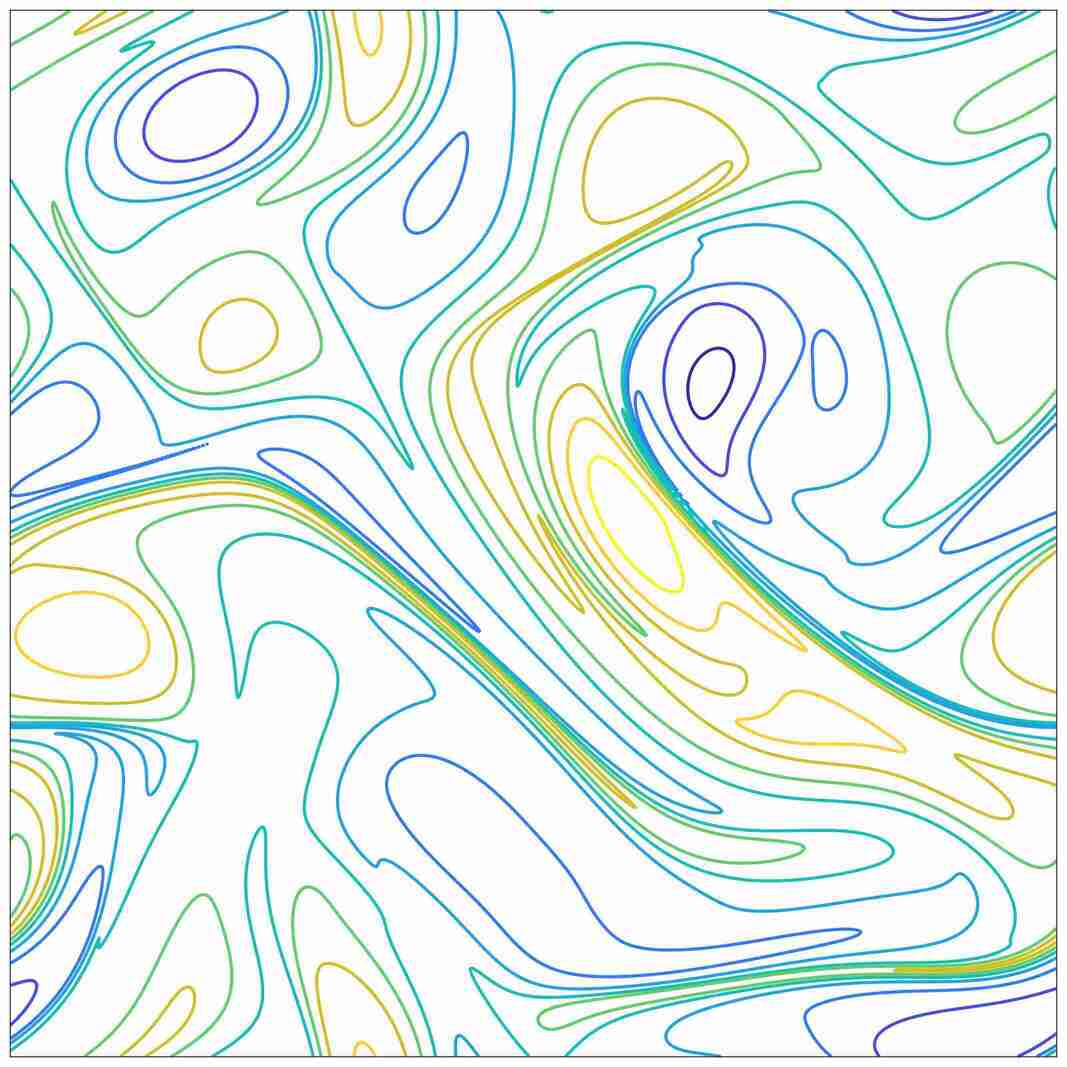}
\caption{$t=1.5$}
\label{subfig:Wr15}
\end{subfigure}
\begin{subfigure}{0.21\linewidth}
\centering
\includegraphics[width = \linewidth]{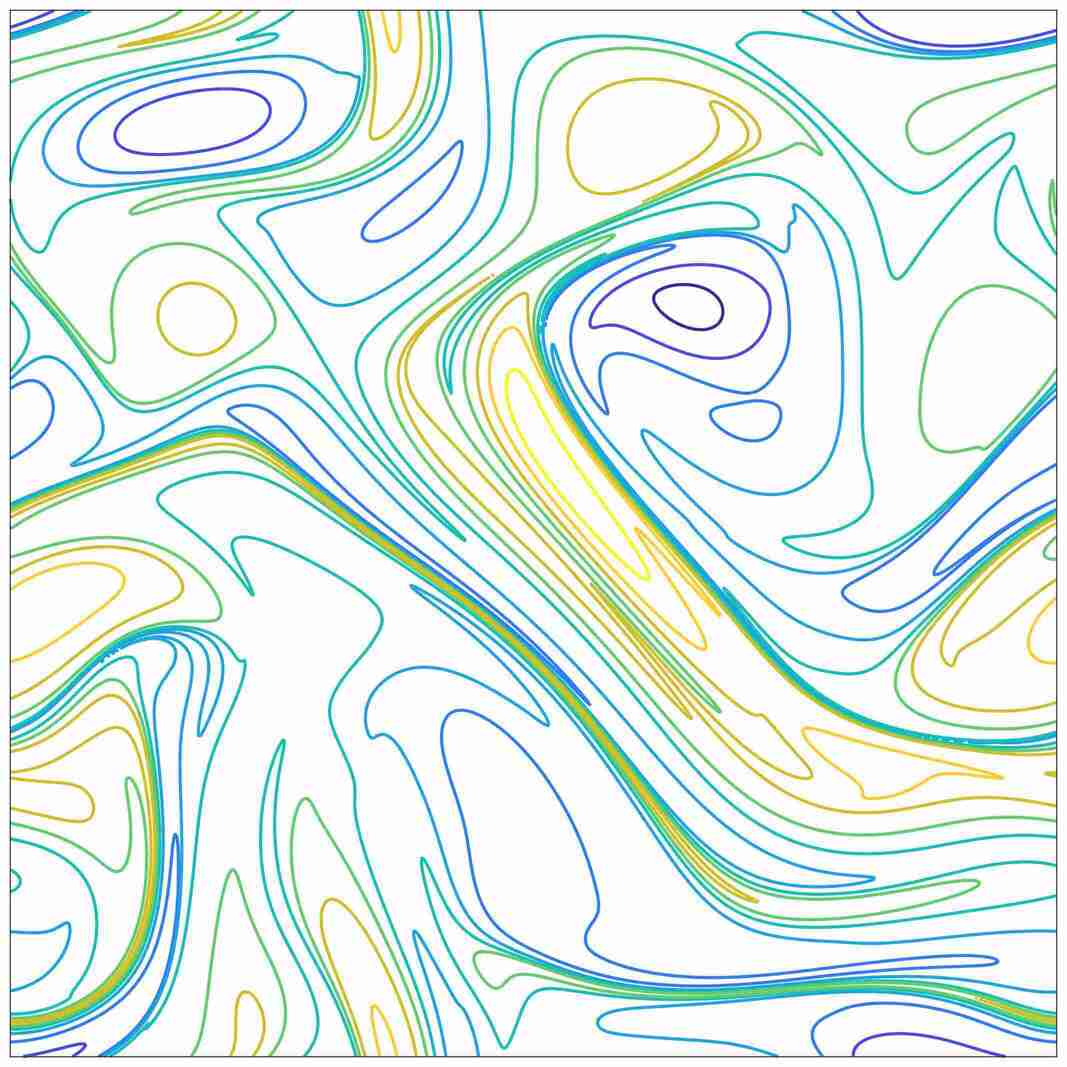}
\caption{$t=2$}
\label{subfig:Wr20}
\end{subfigure}
\caption{Contour plot of $\omega^n$ for the random initial condition test using $256^2$ grid for $\vhX^n$, $1024^2$ grid for representing $\psi^n$, $\incr{t} = 1/128$ and $\delta_{\det} = 10^{-4}$.}
\label{fig:vortContours_rand}
\end{figure}

\begin{figure}[h]
\captionsetup[subfigure]{justification=centering}
\centering
\begin{subfigure}{0.21\linewidth}
\centering
\includegraphics[width = \linewidth]{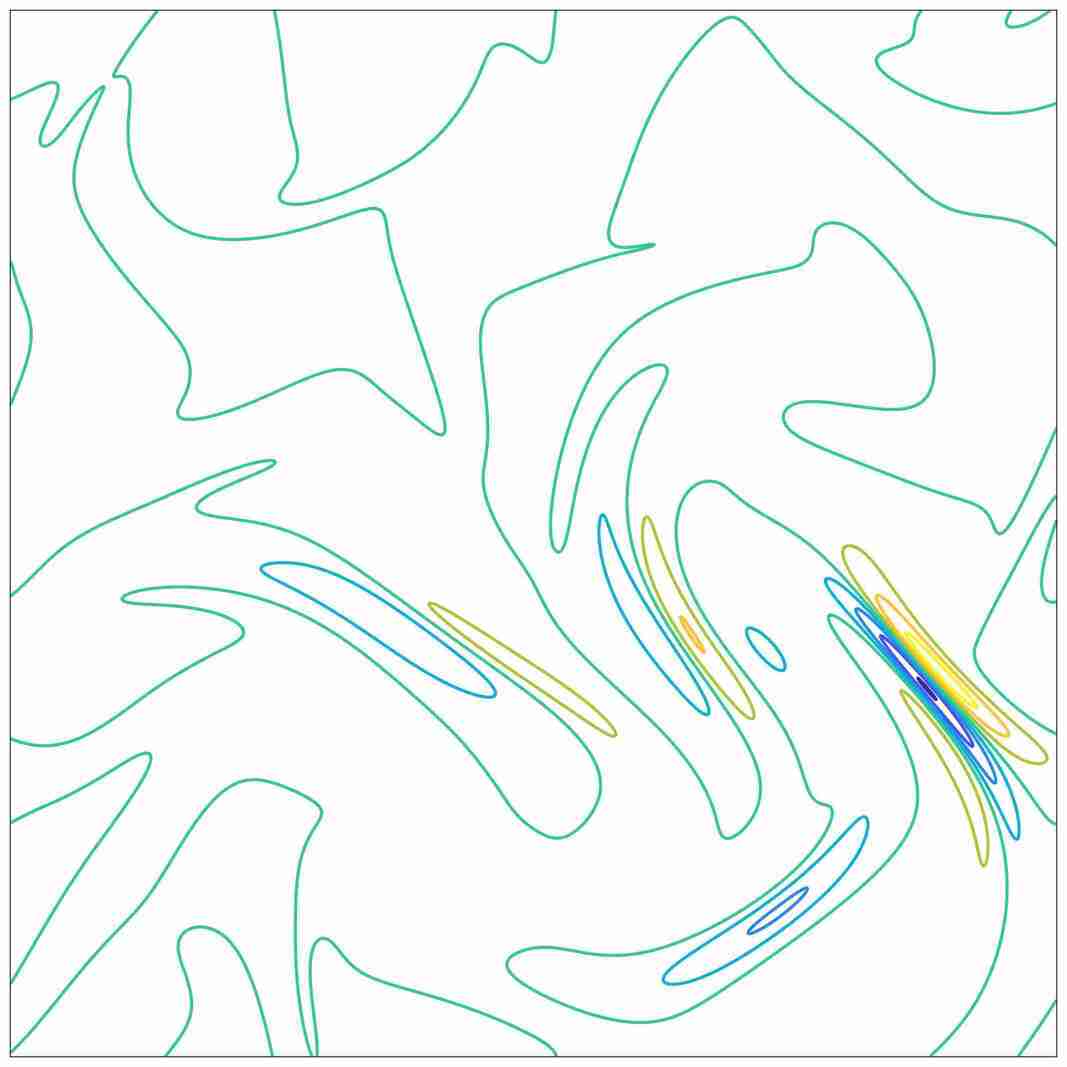}
\caption{$t=0.5$}
\label{subfig:LWr5}
\end{subfigure}
\begin{subfigure}{0.21\linewidth}
\centering
\includegraphics[width = \linewidth]{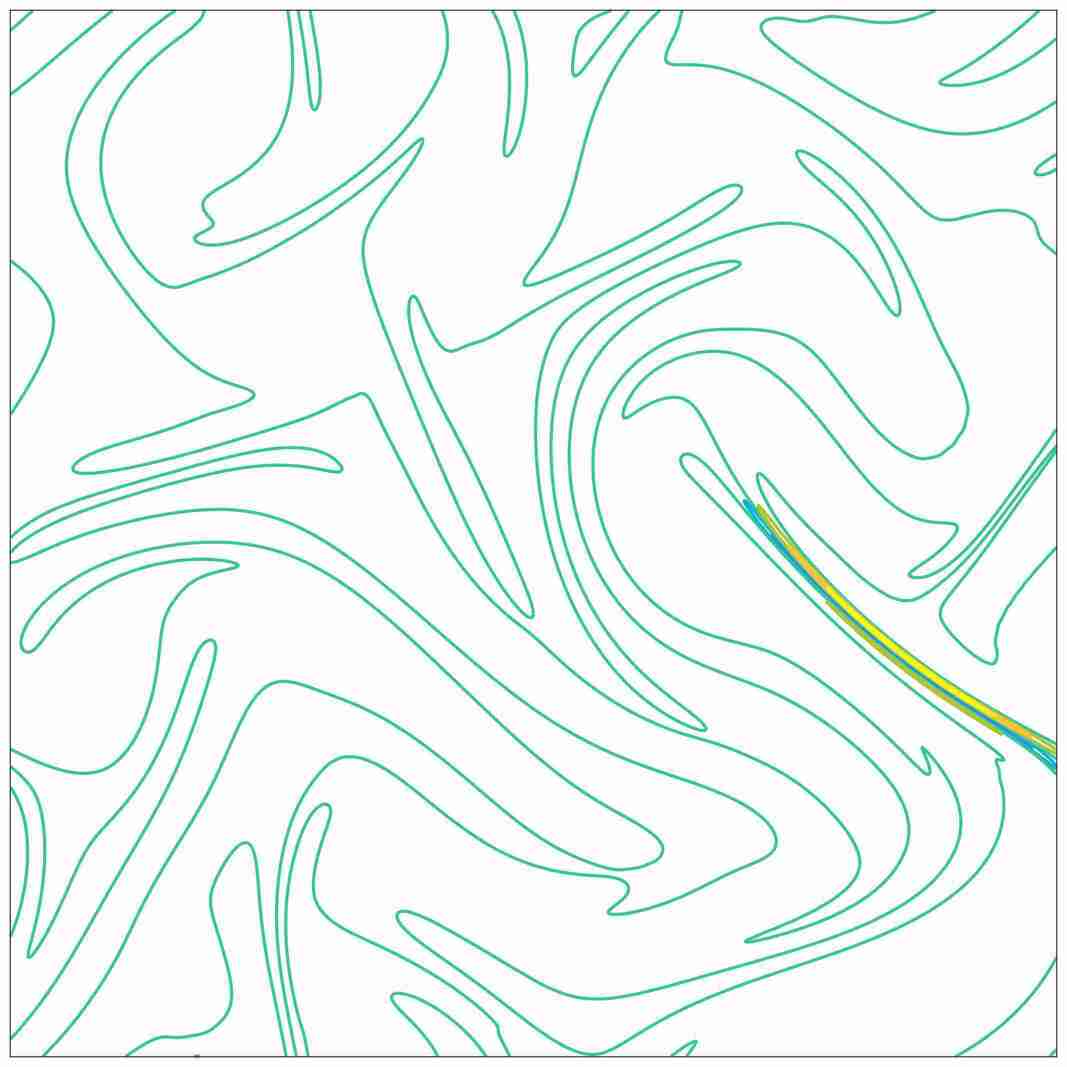}
\caption{$t=1$}
\label{subfig:LWr10}
\end{subfigure}
\begin{subfigure}{0.21\linewidth}
\centering
\includegraphics[width = \linewidth]{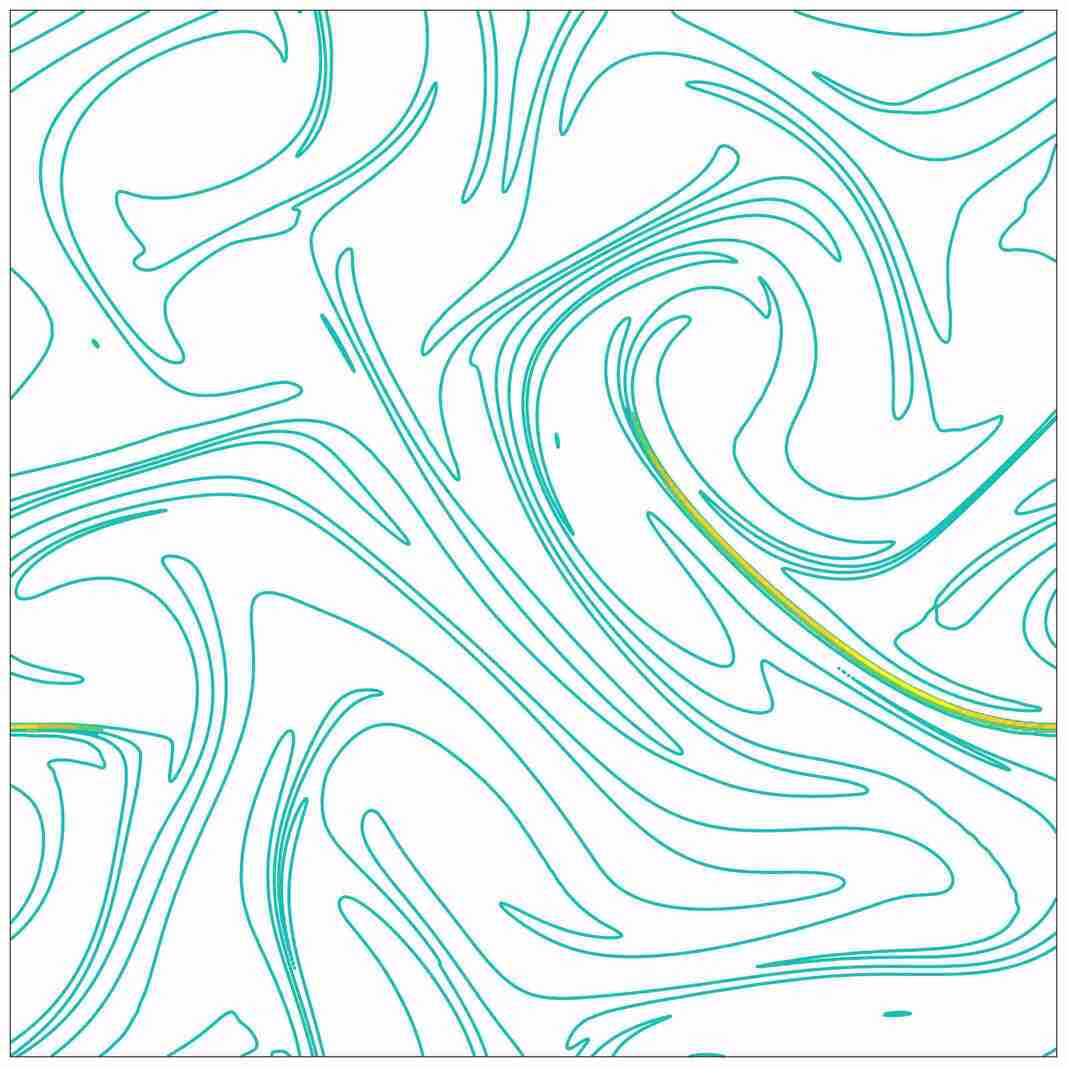}
\caption{$t=1.5$}
\label{subfig:LWr15}
\end{subfigure}
\begin{subfigure}{0.21\linewidth}
\centering
\includegraphics[width = \linewidth]{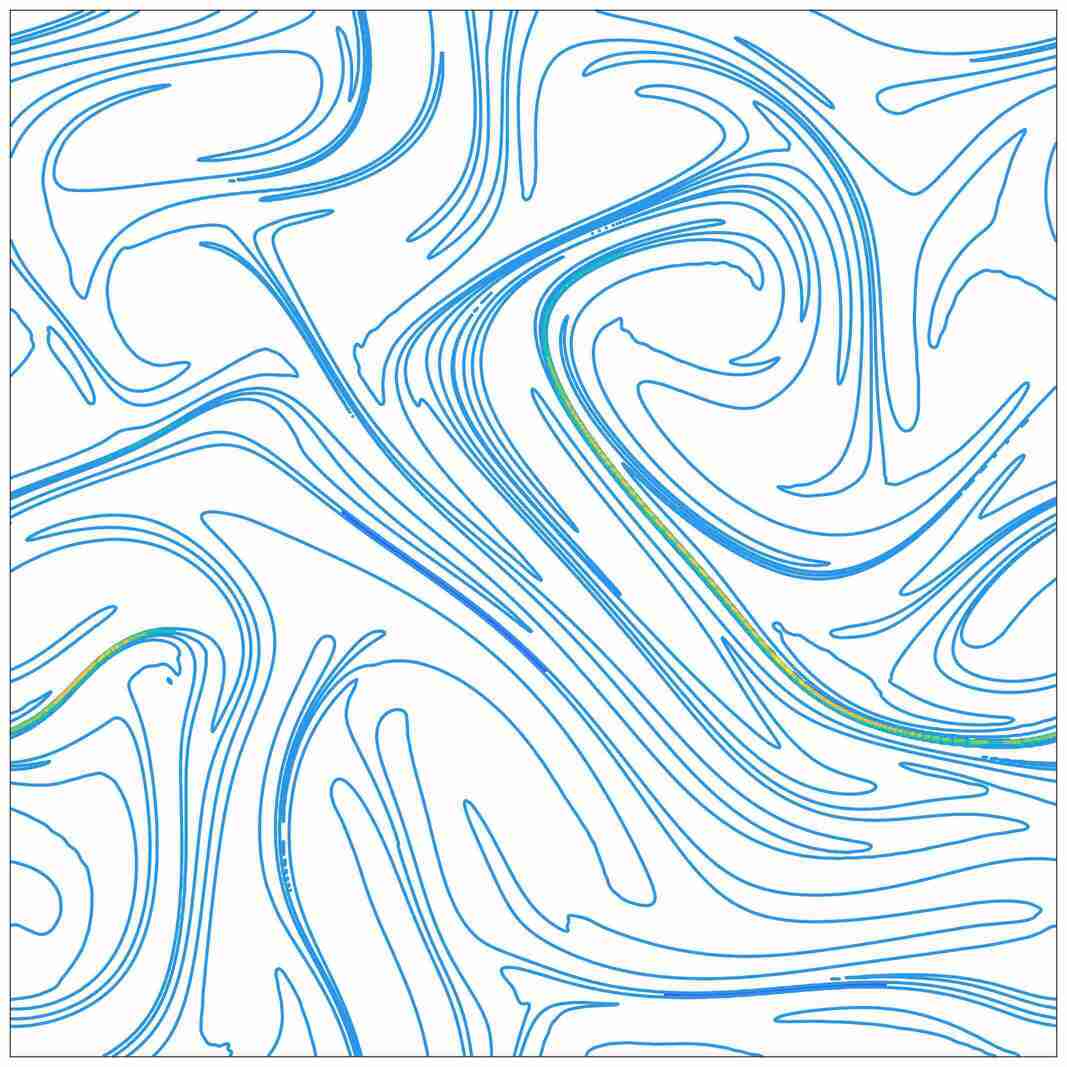}
\caption{$t=2$}
\label{subfig:LWr20}
\end{subfigure}
\caption{Contour plot of $\Laplace \omega^n$ for the random initial condition test using $256^2$ grid for $\vhX^n$, $1024^2$ grid for representing $\psi^n$, $\incr{t} = 1/128$ and $\delta_{\det} = 10^{-4}$.}
\label{fig:LVortContour_rand}
\end{figure}

\begin{figure}[h]
\centering
\begin{subfigure}{0.21\linewidth}
\includegraphics[width = \linewidth]{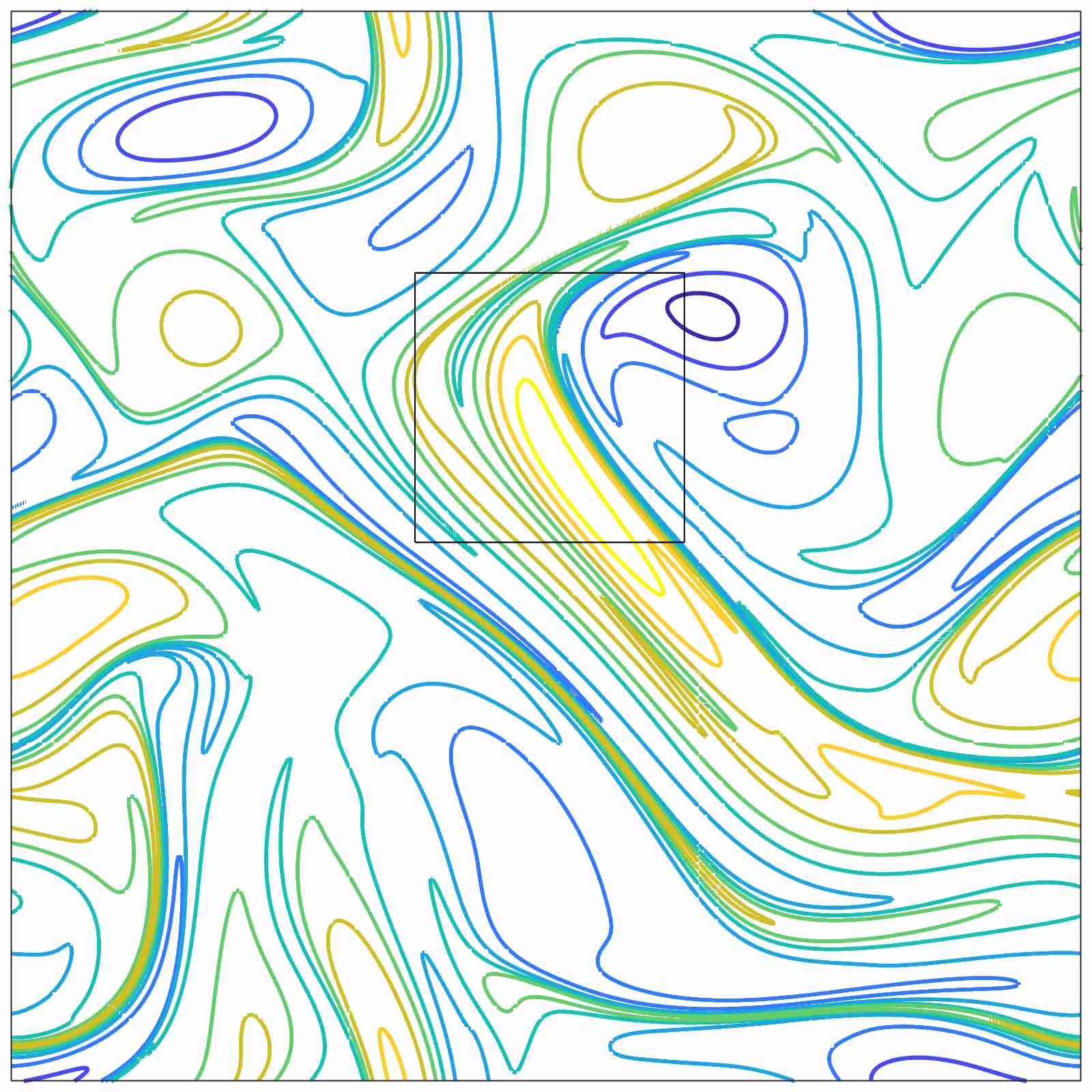}
\caption{$[0,1] \times [0, 1]$}
\label{subfig:Wrz1t2}
\end{subfigure}
\begin{subfigure}{0.21\linewidth}
\includegraphics[width = \linewidth]{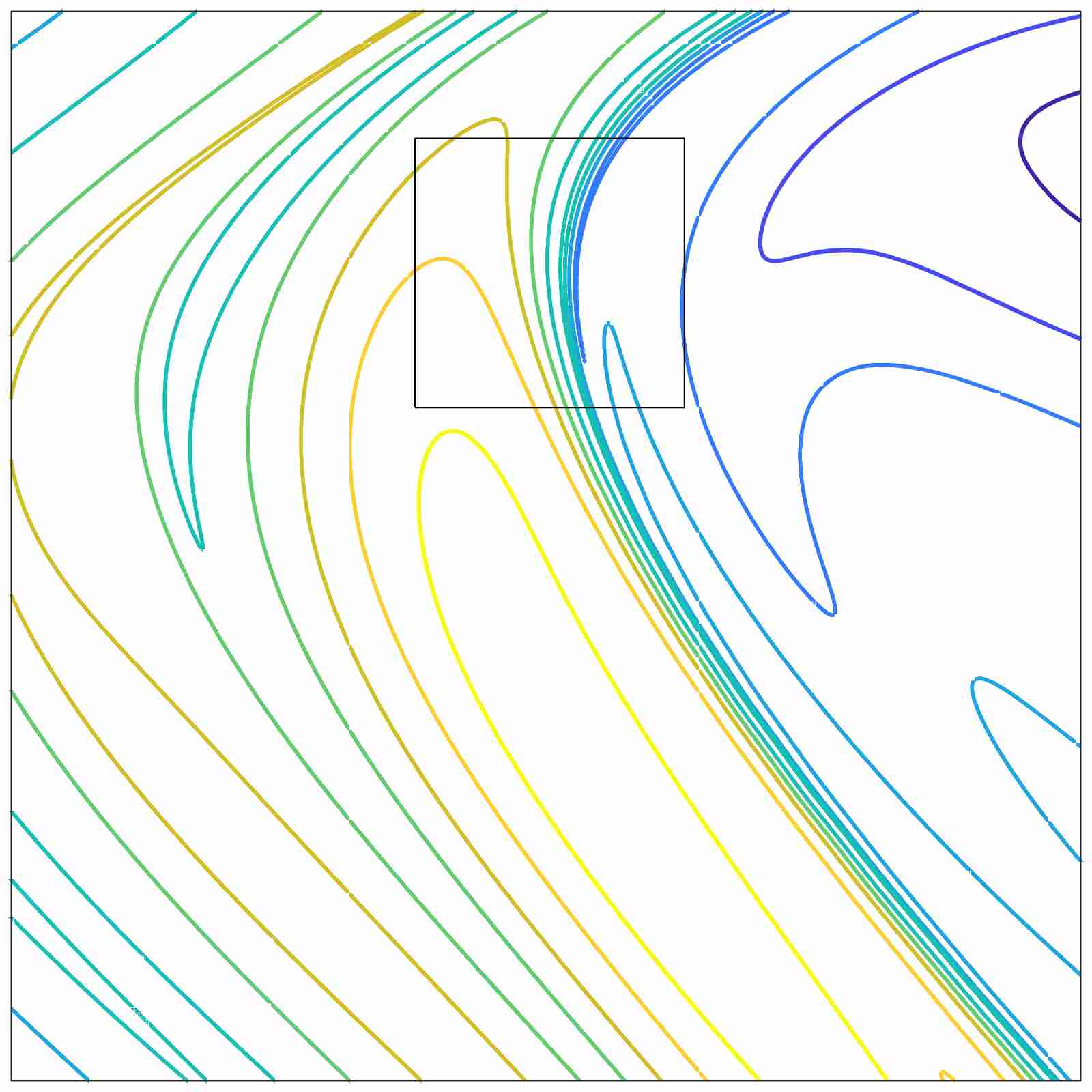}
\caption{$[\frac{3}{8},\frac{5}{8}] \times [\frac{1}{2}, \frac{3}{4}]$}
\label{subfig:Wrz2t2}
\end{subfigure}
\begin{subfigure}{0.21\linewidth}
\includegraphics[width = \linewidth]{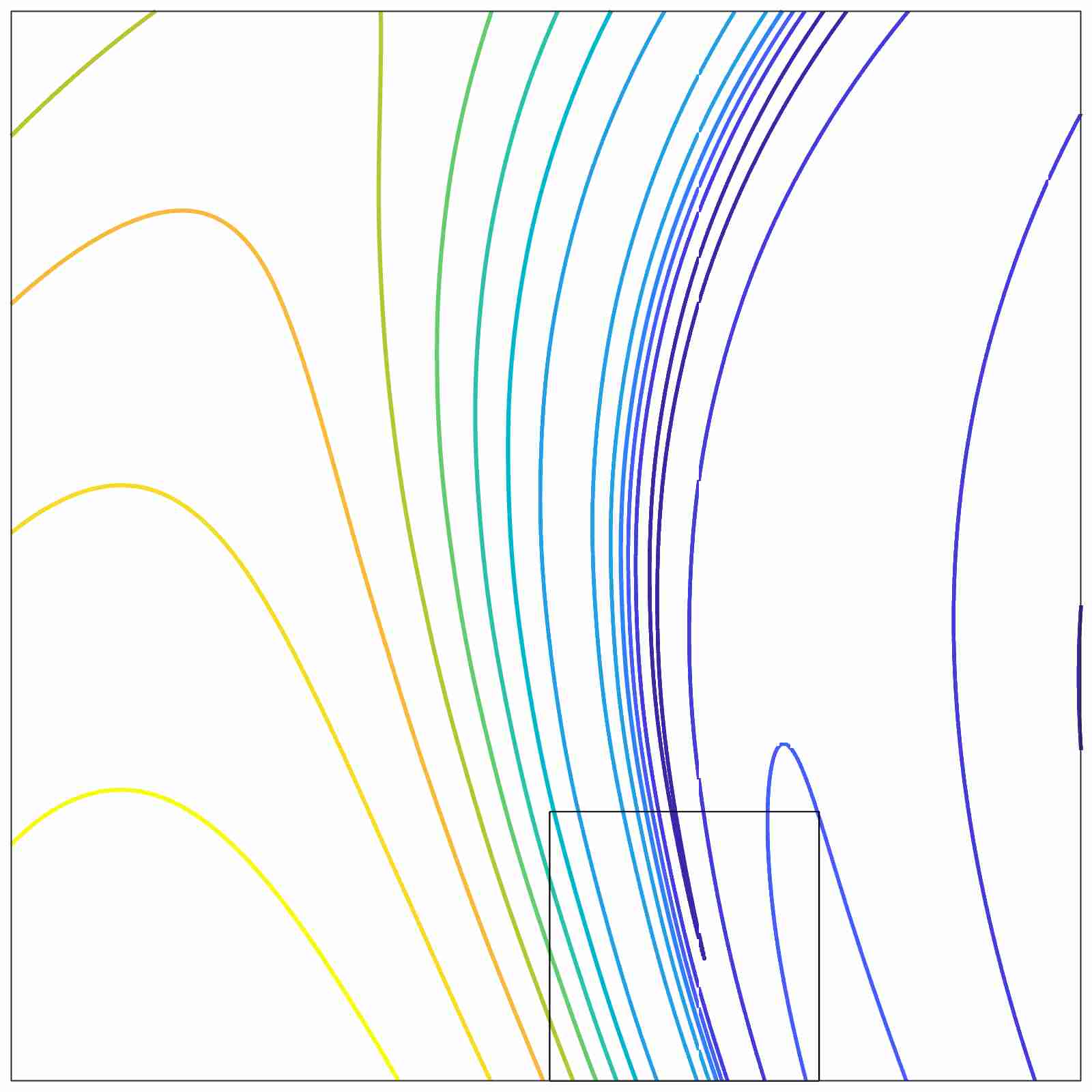}
\caption{$[\frac{15}{32}, \frac{17}{32}] \times [\frac{21}{32}, \frac{23}{32}]$}
\label{subfig:Wrz3t2}
\end{subfigure}
\begin{subfigure}{0.21\linewidth}
 \includegraphics[width = \linewidth]{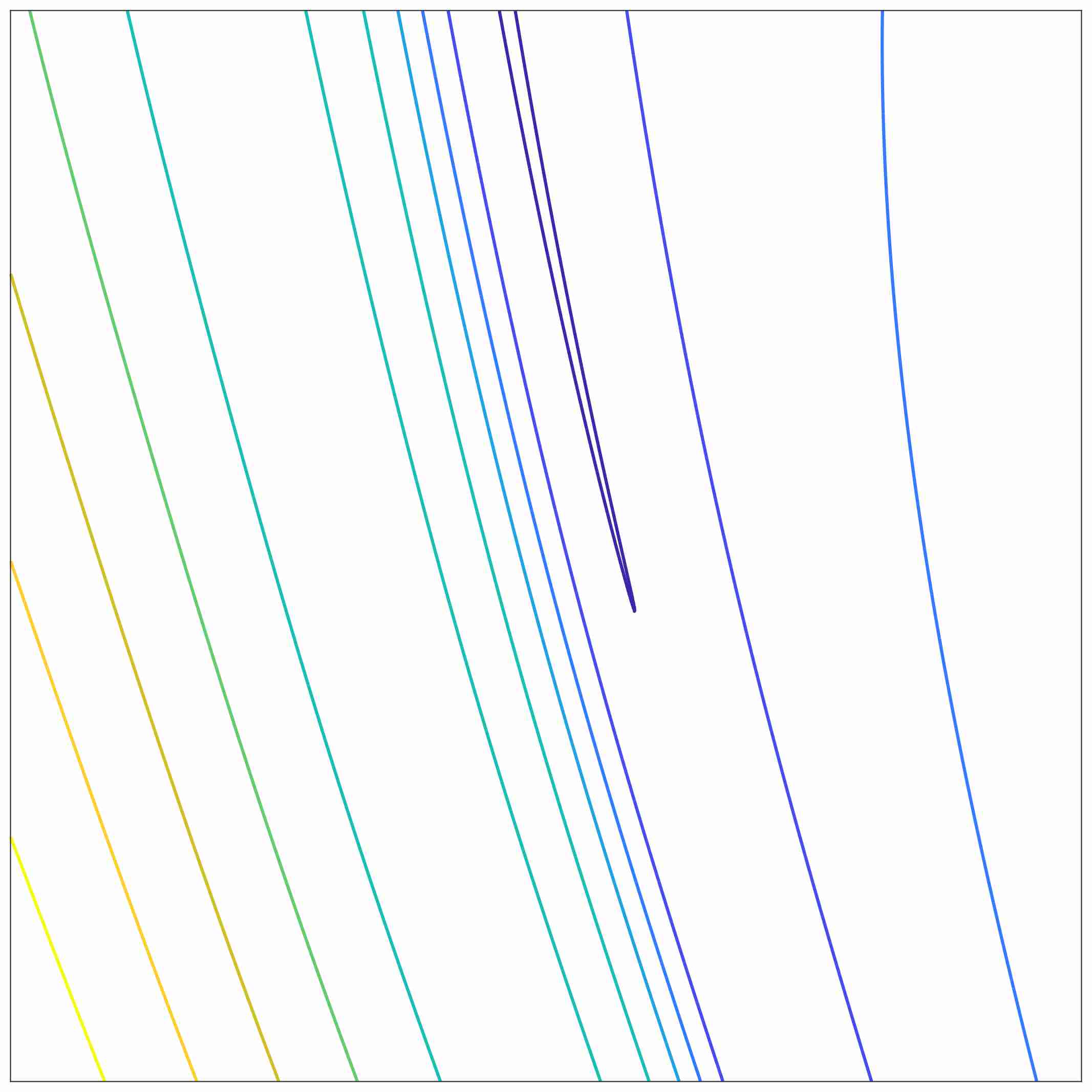}
\caption{$[\frac{1}{2}, \frac{33}{64}] \times [\frac{21}{32}, \frac{43}{64}]$}
\label{subfig:Wrz4t2}
\end{subfigure}
\caption{Gradual $64\times$ zoom on the vorticity at $t=2$.}
\label{fig:Wrzoom_t2}
\end{figure}

\begin{figure}[h]
\centering
\begin{subfigure}{0.21\linewidth}
\includegraphics[width = \linewidth]{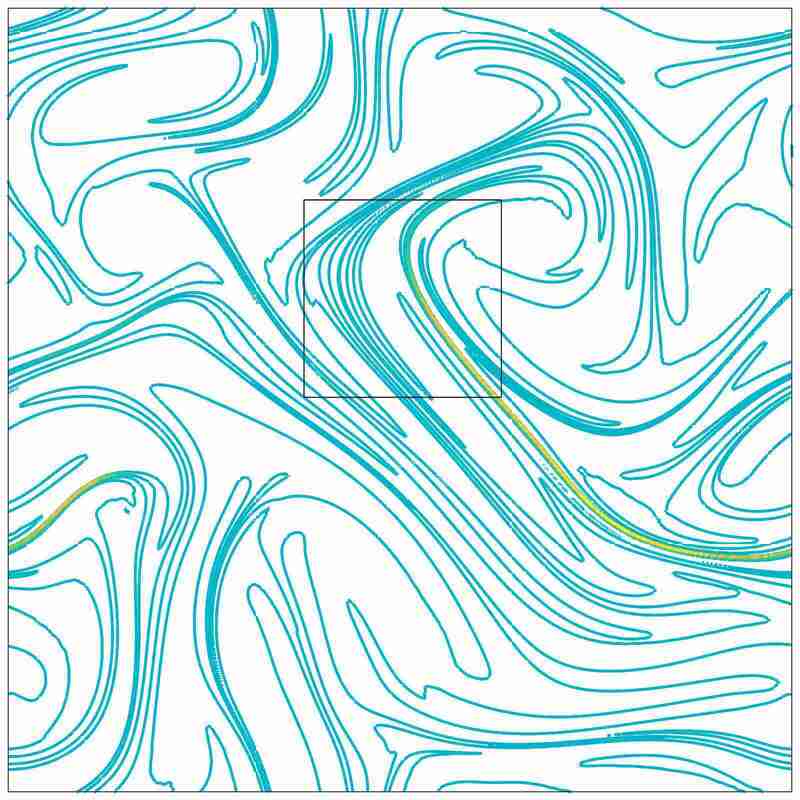}
\caption{$[0,1] \times [0, 1]$}
\label{subfig:LWrz1t2}
\end{subfigure}
\begin{subfigure}{0.21\linewidth}
\includegraphics[width = \linewidth]{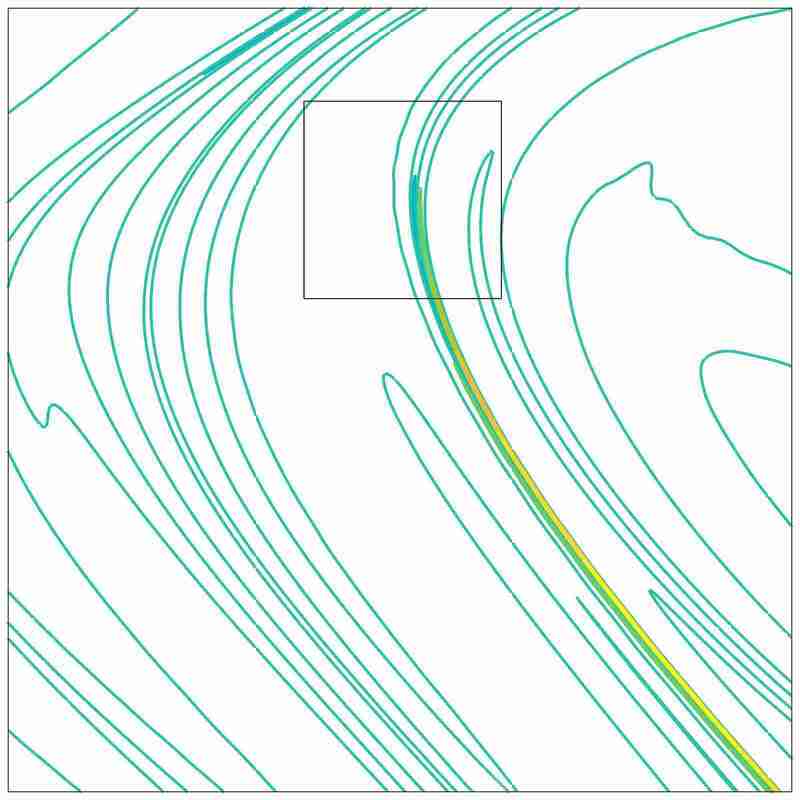}
\caption{$[\frac{3}{8},\frac{5}{8}] \times [\frac{1}{2}, \frac{3}{4}]$}
\label{subfig:LWrz2t2}
\end{subfigure}
\begin{subfigure}{0.21\linewidth}
\includegraphics[width = \linewidth]{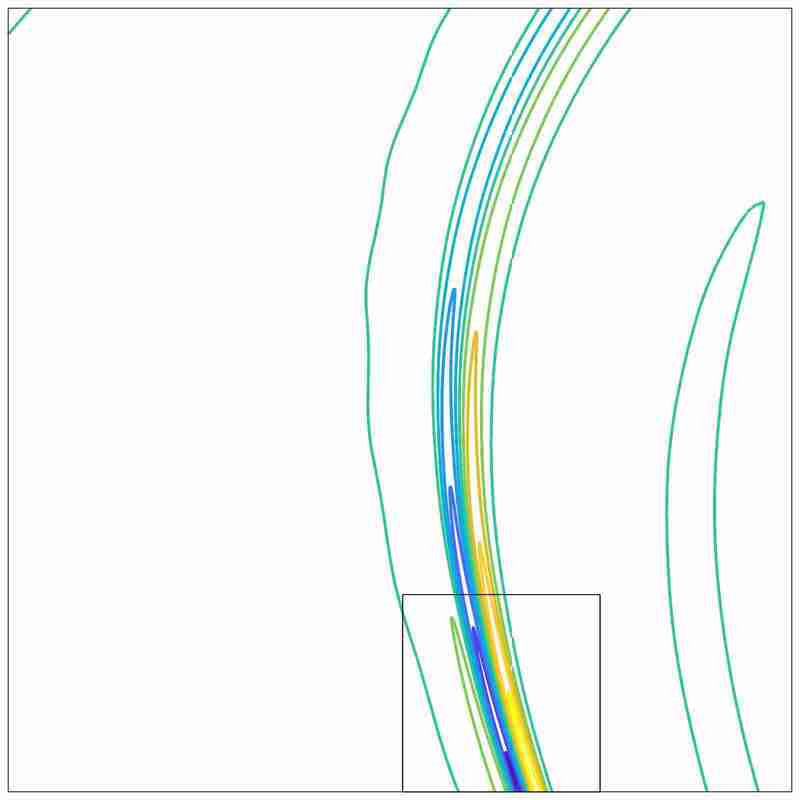}
\caption{$[\frac{15}{32}, \frac{17}{32}] \times [\frac{21}{32}, \frac{23}{32}]$}
\label{subfig:LWrz3t2}
\end{subfigure}
\begin{subfigure}{0.21\linewidth}
 \includegraphics[width = \linewidth]{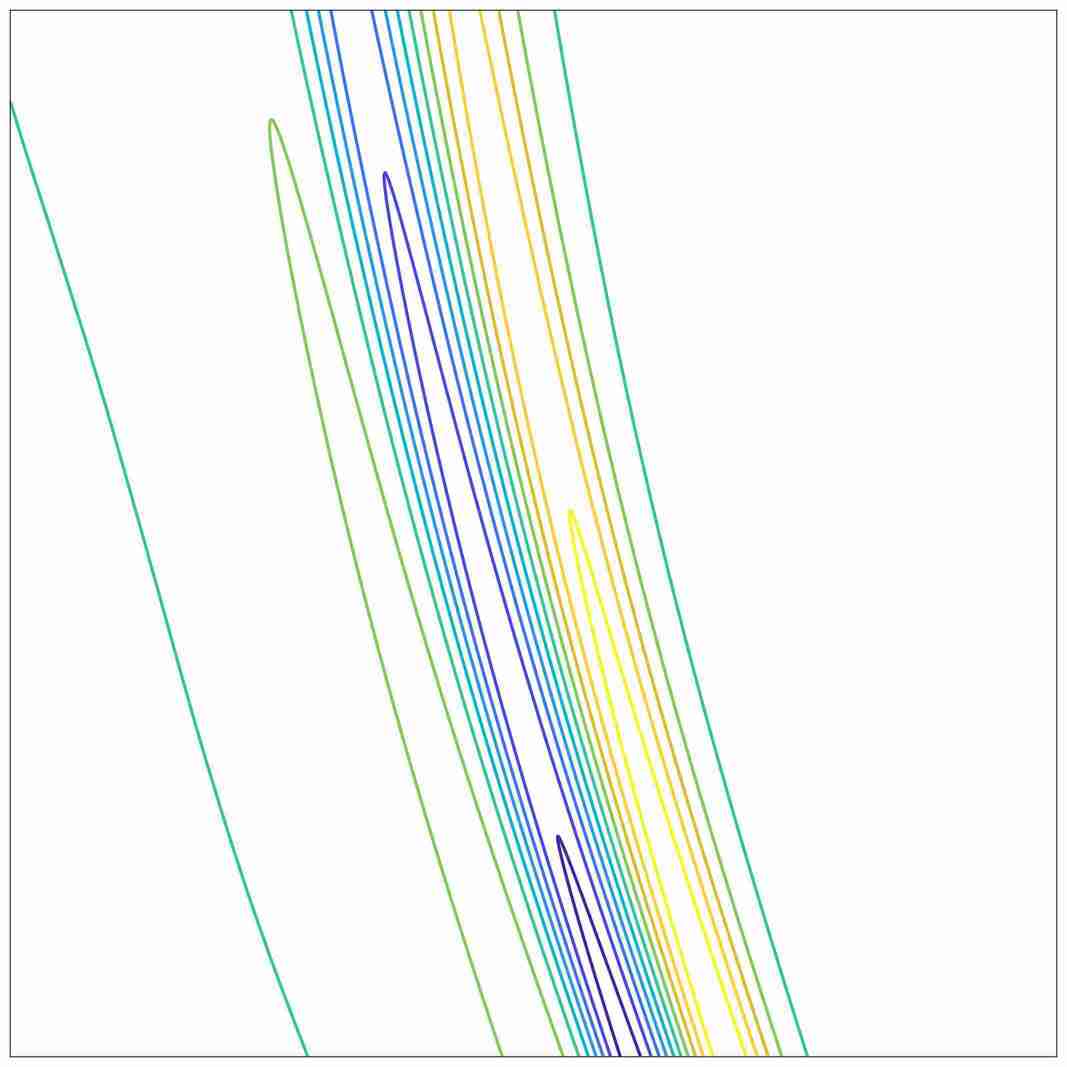}
\caption{$[\frac{1}{2}, \frac{33}{64}] \times [\frac{21}{32}, \frac{43}{64}]$}
\label{subfig:LWrz4t2}
\end{subfigure}
\caption{Gradual $64\times$ zoom on the Laplacian of the vorticity at $t=2$.}
\label{fig:LWrzoom_t2}
\end{figure}

As in the 4-modes test, we also observe slow growth in conservation errors for the random initial data in table \ref{tab:consError_rand}.
\begin{table}[H] \footnotesize
\begin{center}
{\renewcommand{\arraystretch}{1.5}
\begin{tabular}{c | c c c c}
\hline
\hline
$t$ & 0.5 & 1 & 1.5 & 2 \\
\hline
Enstrophy error & $-2.40 \cdot 10^{-6}$ & $1.36 \cdot 10^{-6}$ & $5.95 \cdot 10^{-6}$ & $9.36 \cdot 10^{-6}$  \\
Energy error & $-7.74 \cdot 10^{-7}$ & $-4.71 \cdot 10^{-6}$ & $-3.79 \cdot 10^{-5}$ & $-9.36 \cdot 10^{-5}$  \\
\hline
\end{tabular}} \\
\end{center}
\caption{Conservation errors for random initial condition test using the CM method.}
\label{tab:consError_rand}
\end{table}
In comparison, the enstrophy error for the $8^{\text{th}}$ order, $2048^2$ harmonics Cauchy-Lagrangian method grows from $10^{-14}$ to $10^{-13}$ to $10^{-8}$ for times $0.2$, $0.6$ and $1$.

\subsection{Spatial resolution}

The vorticity solutions in both tests in this section are observed to have increasingly finer spatial features as time progresses. One way to quantify the evolution of the spatial scales is through the Fourier expansion of the solution, in particular, we look at the decay of the magnitudes of the high frequency coefficients. This can be seen from the vorticity spectrum obtained by integrating the square of vorticity over circular shells in Fourier space. That is, let $\hat{\omega}_{\bm{k}}$ be the $\bm{k} = (k_1, k_2)$ coefficient of the Fourier transform of $\omega$, we have
\begin{gather} \label{eq:defVortSpectrum}
E_\omega (K) := \frac{1}{2} \sum_{K \leq |\bm{k}| < K+1} | \hat{\omega}_{\bm{k}} |^2,
\end{gather}
where $|\bm{k}| = \sqrt{k_1^2 + k_2^2}$.

We compare the vorticity spectrum from the CM method to that of Cauchy-Lagrangian (CL8) method presented in \cite{frisch}. Figure \ref{fig:compareSpectrum} shows the overlay of the vorticity spectra obtained from both methods. The vorticity fields are sampled at times $3.45$ and $3.95$ (due to the nature of the time step in the Cauchy-Lagrangian method, the sampling times presented in \cite{frisch} did not land exactly on $t=3.5$ and $4$).

\begin{figure}[h]
\centering
\begin{subfigure}{0.45\linewidth}
\centering
\includegraphics[width = \linewidth]{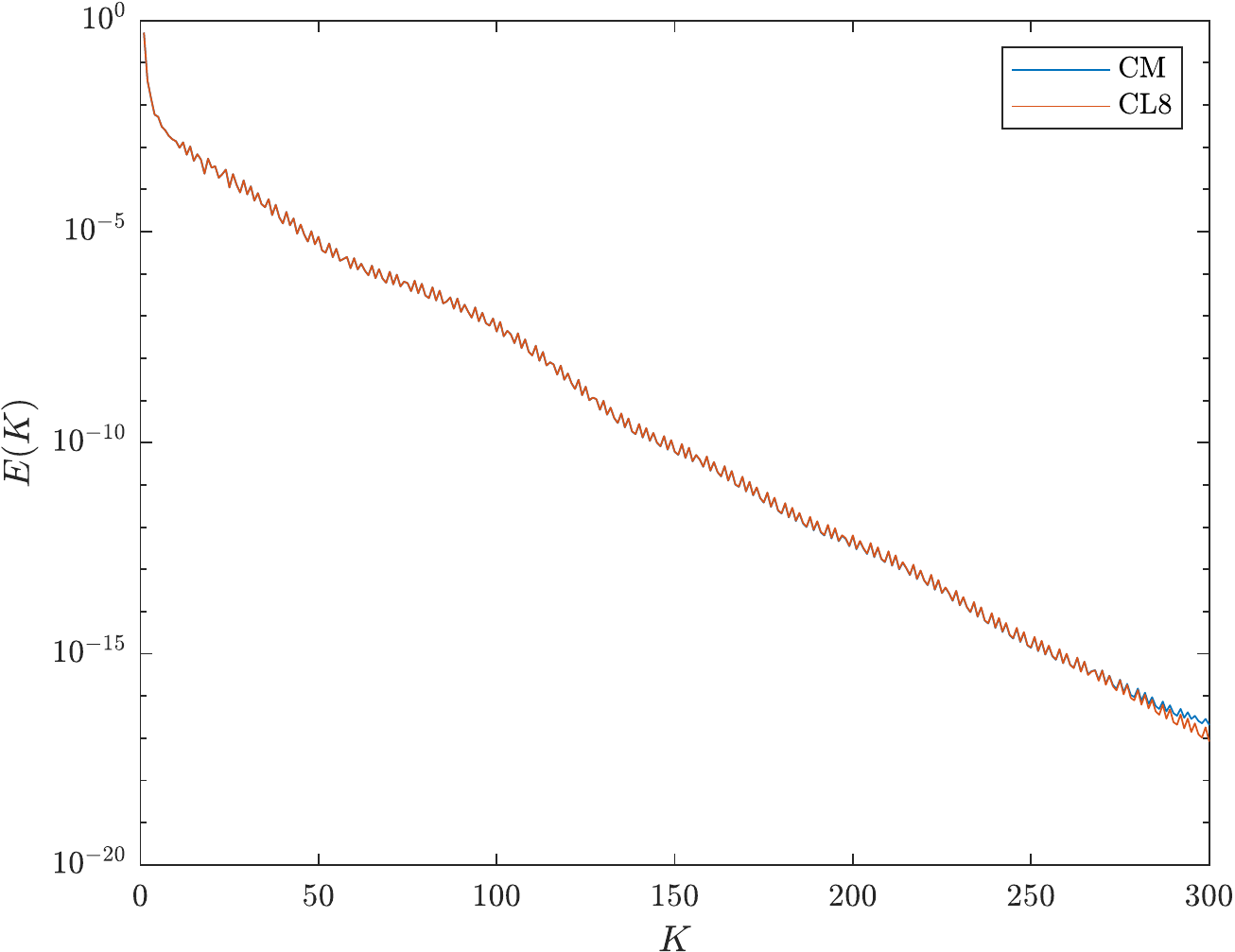}
\caption{$t \approx 3.5$}
\label{subfig:sp35}
\end{subfigure}
\begin{subfigure}{0.45\linewidth}
\centering
\includegraphics[width = \linewidth]{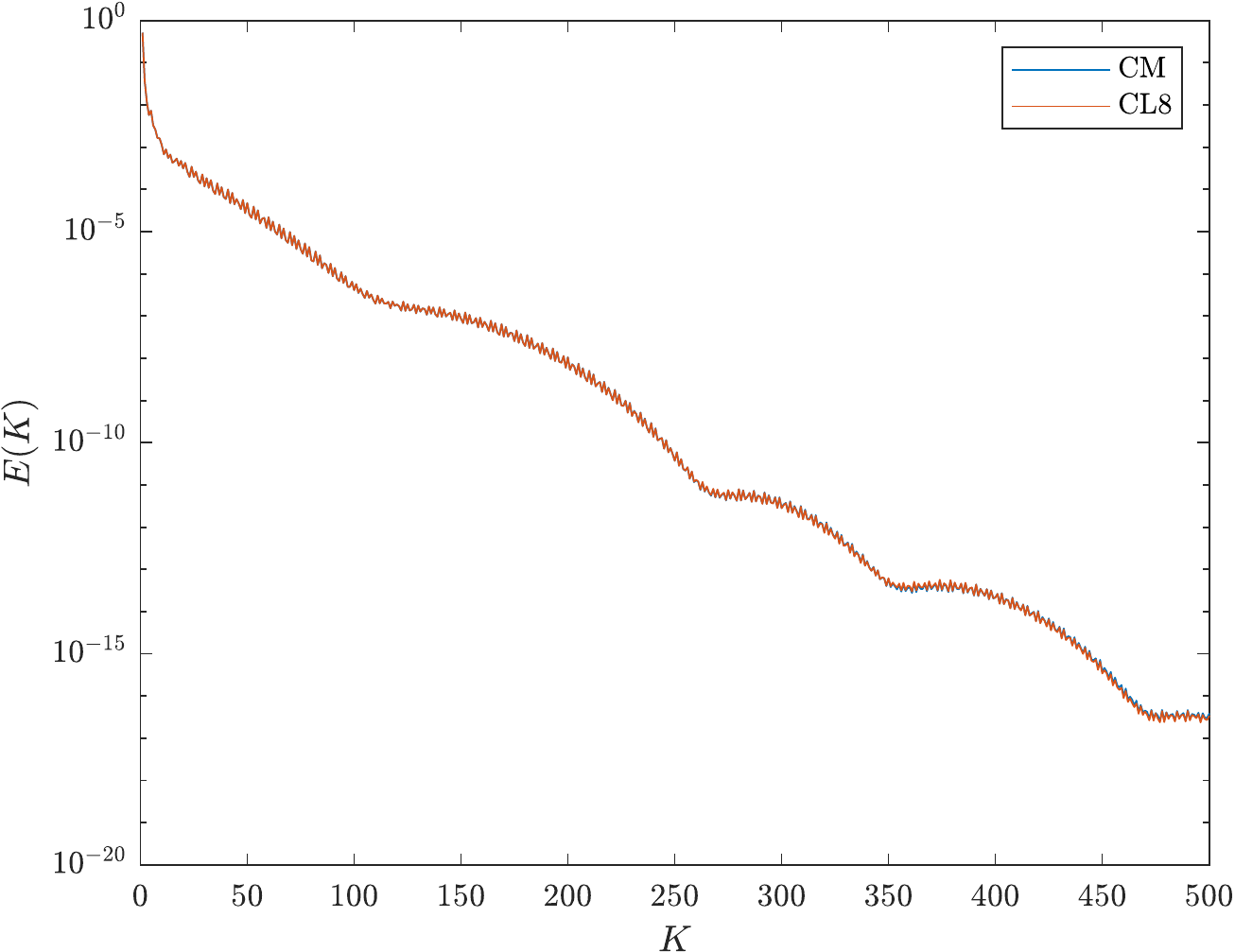}
\caption{$t \approx 4$}
\label{subfig:sp4}
\end{subfigure}
\caption[Decay in the vorticity spectrum at times 3.5 and 4.]{Decay in the vorticity spectrum at times 3.5 and 4\footnotemark.}
\label{fig:compareSpectrum}
\end{figure}

\footnotetext{The Cauchy-Lagrangian method employs variable length time steps. The vorticity spectrum presented in \cite{frisch} are computed at the last time step before reaching times 3.5 and 4. The final times turned out to be approximately 3.45 and 3.95. In this figure, the final times for the CM method are taken to be exactly 3.45 and 3.95.}

\begin{figure}[h]
\centering
\begin{subfigure}{0.45\linewidth}
\centering
\includegraphics[width = \linewidth]{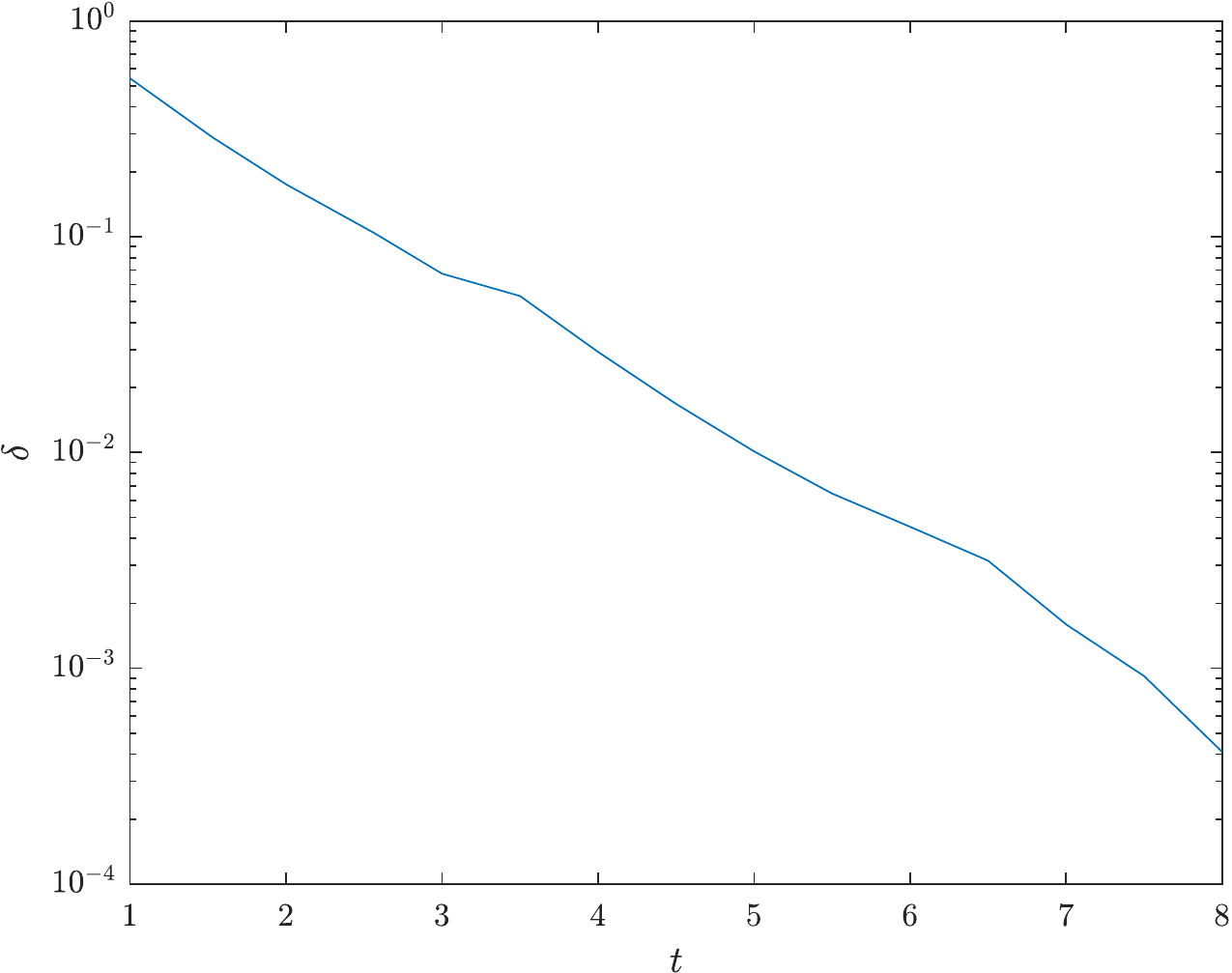}
\caption{4-modes}
\label{subfig:rad8}
\end{subfigure}
\hspace{10pt}
\begin{subfigure}{0.45\linewidth}
\centering
\includegraphics[width = \linewidth]{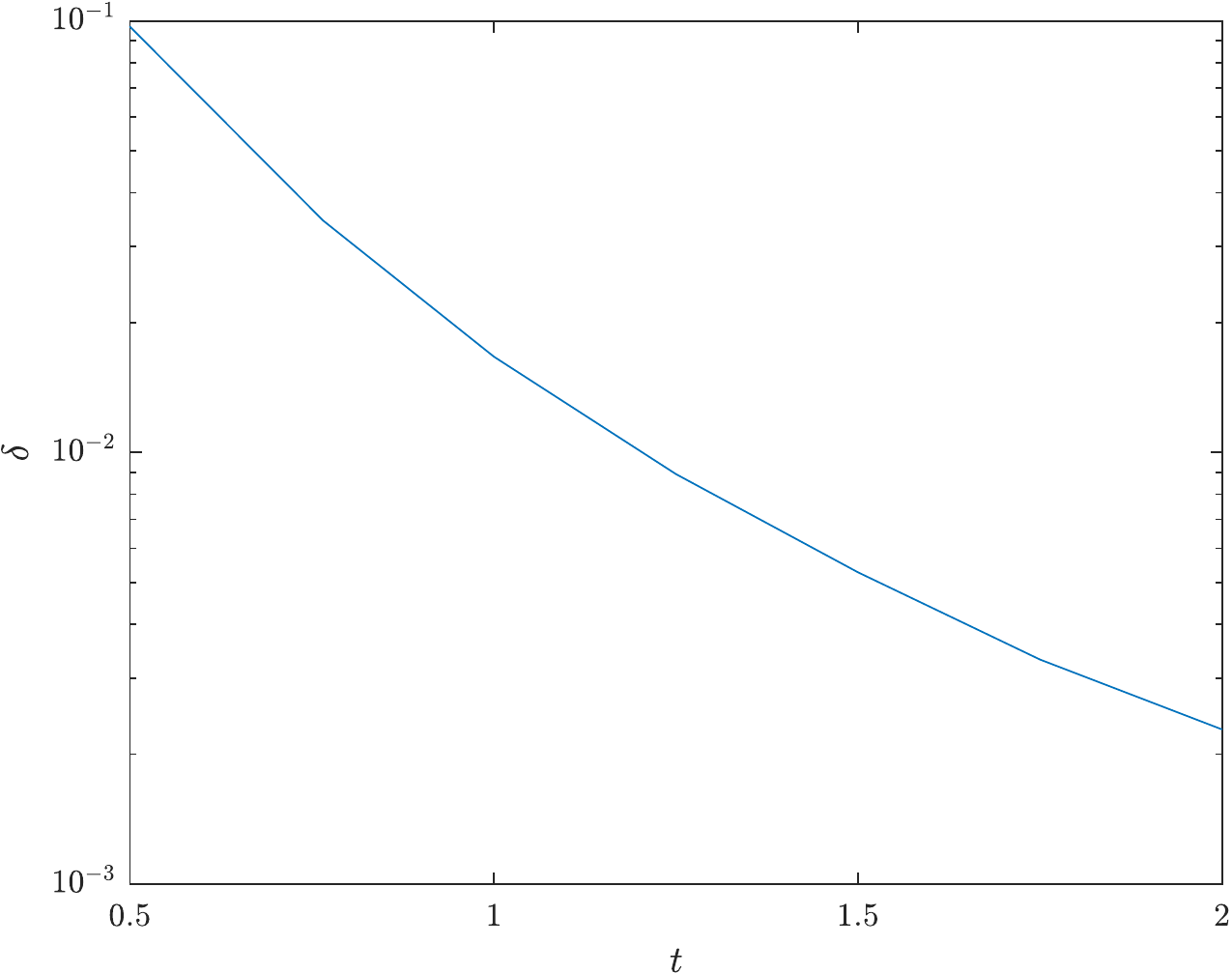}
\caption{Random initial condition}
\label{subfig:radrand}
\end{subfigure}
\caption{Radius of analyticity $\delta(t)$ vs time.}
\label{fig:analyticRadius}
\end{figure}

We see that in figure \ref{fig:compareSpectrum}, the vorticity spectra obtained from the CM method matches almost exactly, up to double precision, the high-fidelity results obtained from the Cauchy-Lagrangian $8096^2$ simulation (The curves are on top of each other. The plots are provided in vector-graphics format, zooming in the tail of the spectrum shows some discrepancies between the two curves. This is in large part due to the two simulations not having exactly the same final times). This suggests that small time deformations can be accurately represented on a coarse $128^2$ grid and the fine scale global time deformations can be reconstructed by the composition of the submaps without loss of resolution.

One measurement of the asymptotic decay of the Fourier coefficients is the radius of analyticity. The decay rate of the vorticity spectrum at high frequency modes indicates the spatial scales present in the solution. Asymptotically, the decay of the vorticity spectrum is typically
\begin{gather} \label{eq:vortSpectDecay}
E_\omega (K) \sim K^\alpha e^{-2\delta K} .
\end{gather}
The rate of the exponential, $\delta$, is the radius of analyticity and governs the spatial truncation error. For a grid which resolves a maximum  frequency of $k_{\max}$, the spatial truncation error scales like $e^{-\delta k_{\max}}$. 

Figure \ref{fig:analyticRadius} shows the evolution of the radius of analyticity in time for both numerical tests. The radius is estimated at various times by taking a least-squared fit of the logarithm of the tail of the vorticity spectrum $\log (E_\omega(K))$ with respect to the quantities $log(K)$, $K$ and $1$; we extract $\delta$ from the fitted coefficient for $K$. We see that the reduction in the radius of analyticity is exponential. This implies that in order to maintain a certain level of spatial truncation, the maximum resolved frequency $k_{\max}$ must grow exponentially. In particular, for the 4-modes test, at time 8, a grid size of order $10^4$ would be needed to properly resolve the solution. Carrying out computations with traditional methods on such grids would be difficult on a personal-use computer. We see that CM method allows us to evolve the solution for long times without having to use such large grids: through the submap decomposition, only local time coarse grids computations are required, the fine scale details can be recovered by the composition of the submaps. The CM method in fact dynamically adapts to the spatial resolution necessary to the problem. Through the remapping process, the available numerical resolutions autonomously grows as the spatial features in the solution increase.


\subsection{Illustration of the Arbitrary Subgrid Resolution} \label{subsec:subgrid}
Finally, in this section we provide an illustration of the power semigroup decomposition approach in achieving high subgrid resolution of the solution. For this, we simulate a 2 vortex merger problem. We use two identical Gaussian blobs of variance 0.07 placed 0.3 apart in a periodic domain of width 1 (see figure \ref{fig:vm0}). The two vortices both have clockwise spins and are expected to start spinning around each other and almost merge into a single vortex blob. Due to the lack of viscosity, the vortices do not become a single vortex and will generate instabilities as time goes on.

\begin{figure}[h]
\centering
\includegraphics[width = 0.38\linewidth]{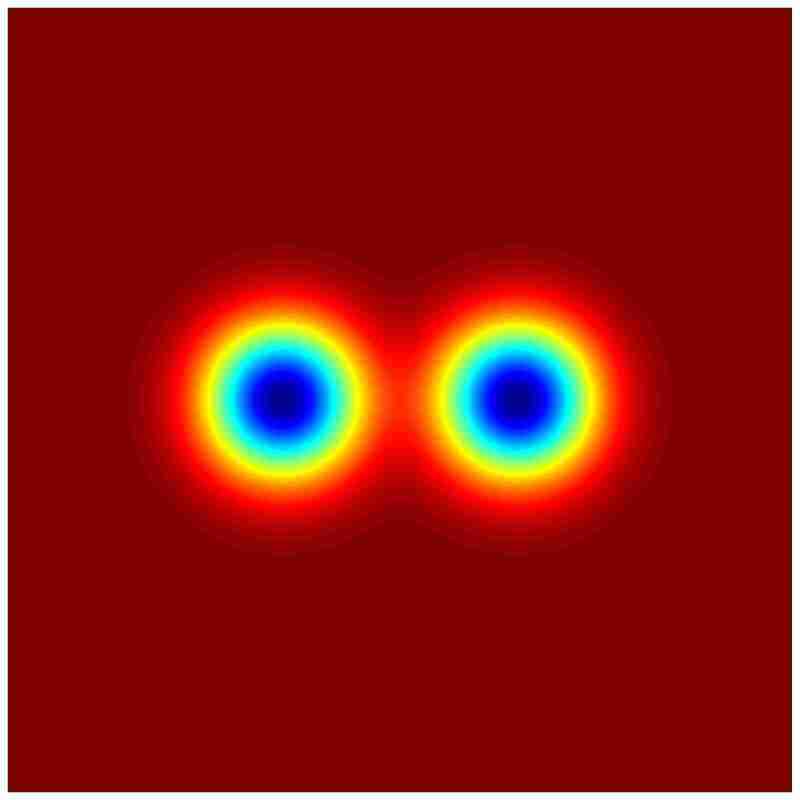}
\caption{Initial vorticity for the vortex merger simulation.}
\label{fig:vm0}
\end{figure}

\begin{figure}[h]
\centering
\includegraphics[width = 0.35\linewidth]{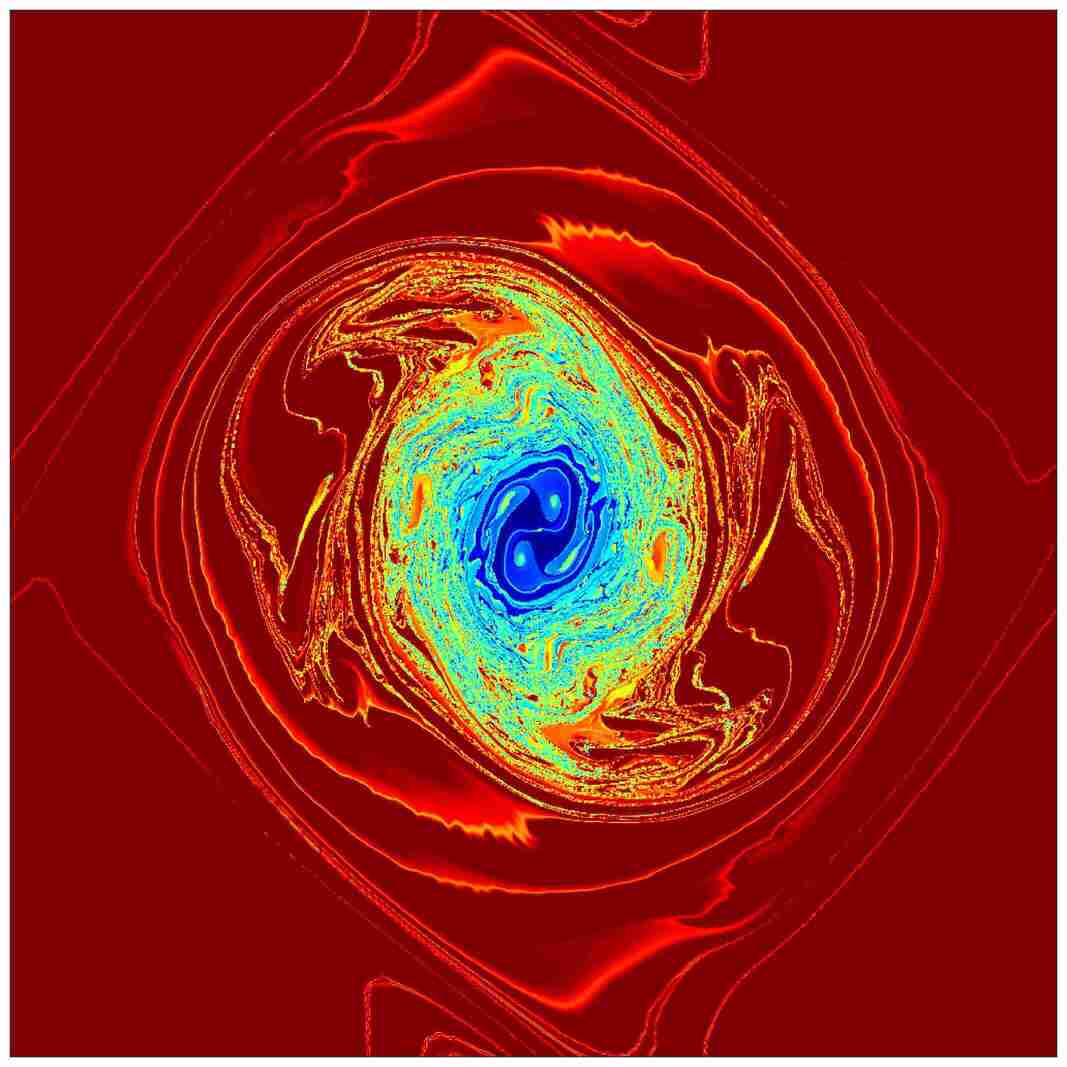}
\caption{Vorticity field at time $t=20$}
\label{fig:vm20}
\end{figure}

\begin{figure}[p!h]
\centering
\begin{subfigure}{0.3\linewidth}
\centering
\includegraphics[width = \linewidth]{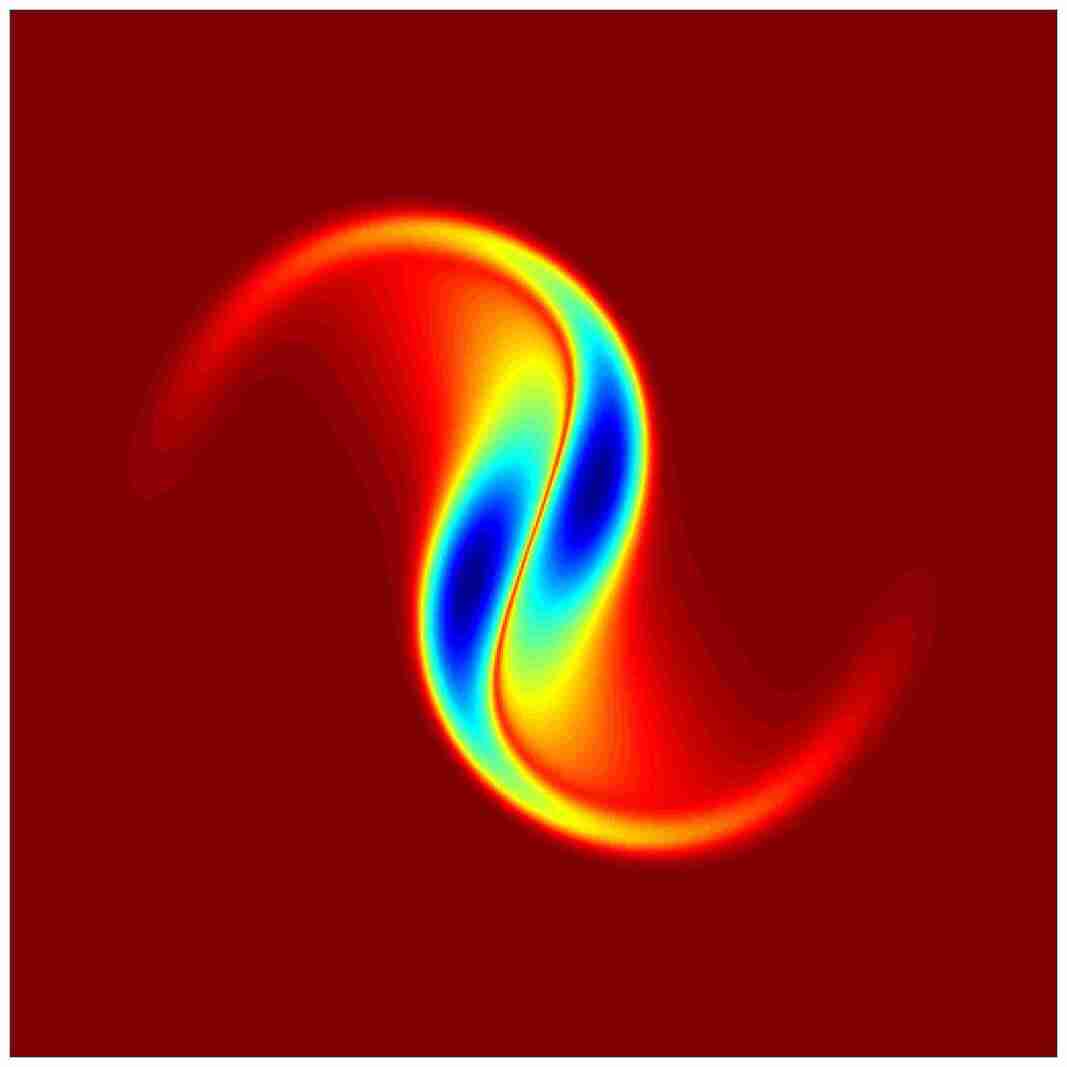}
\caption{$t = 2$}
\label{subfig:vm2}
\end{subfigure}
\begin{subfigure}{0.3\linewidth}
\centering
\includegraphics[width = \linewidth]{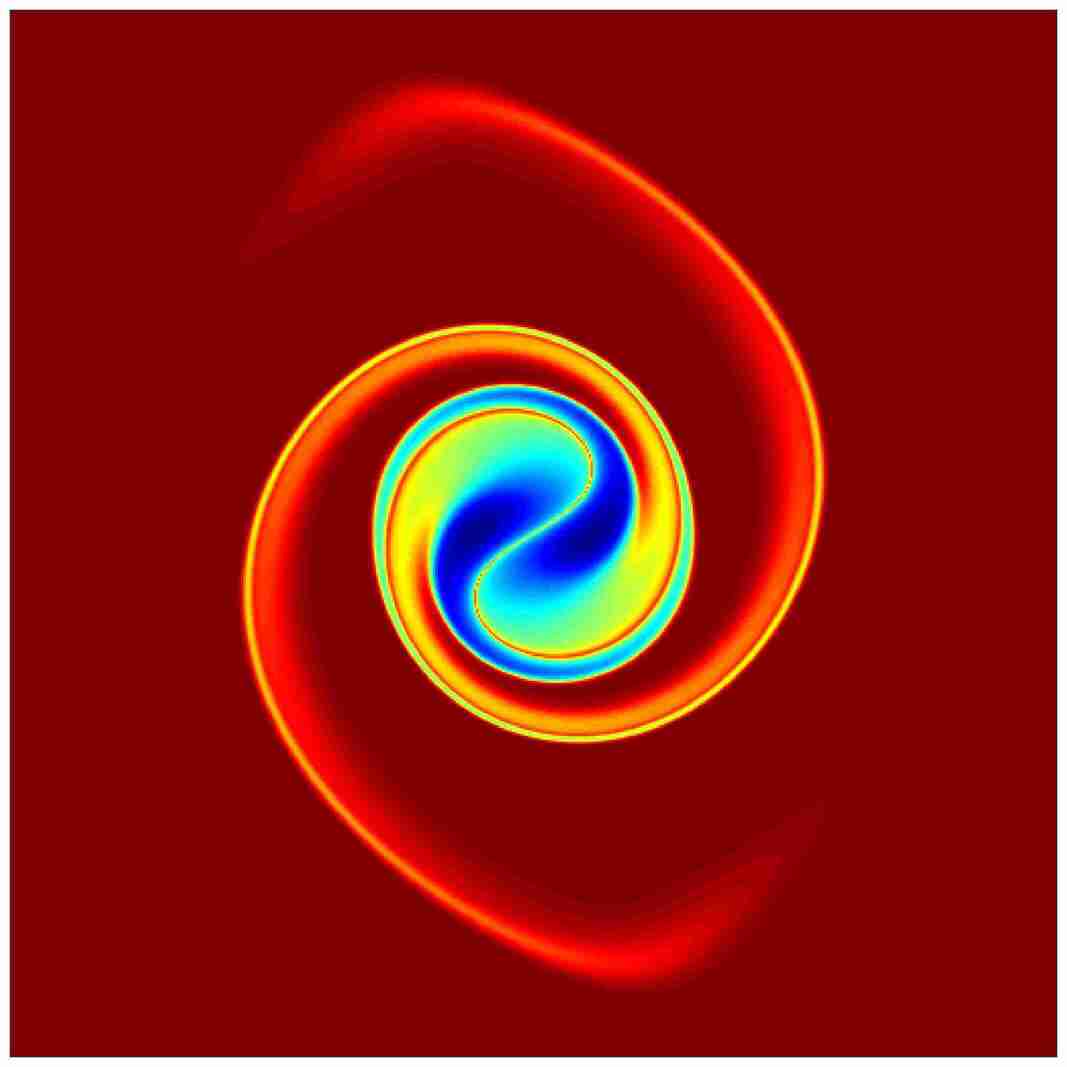}
\caption{$t = 4$}
\label{subfig:vm4}
\end{subfigure}
\begin{subfigure}{0.3\linewidth}
\centering
\includegraphics[width = \linewidth]{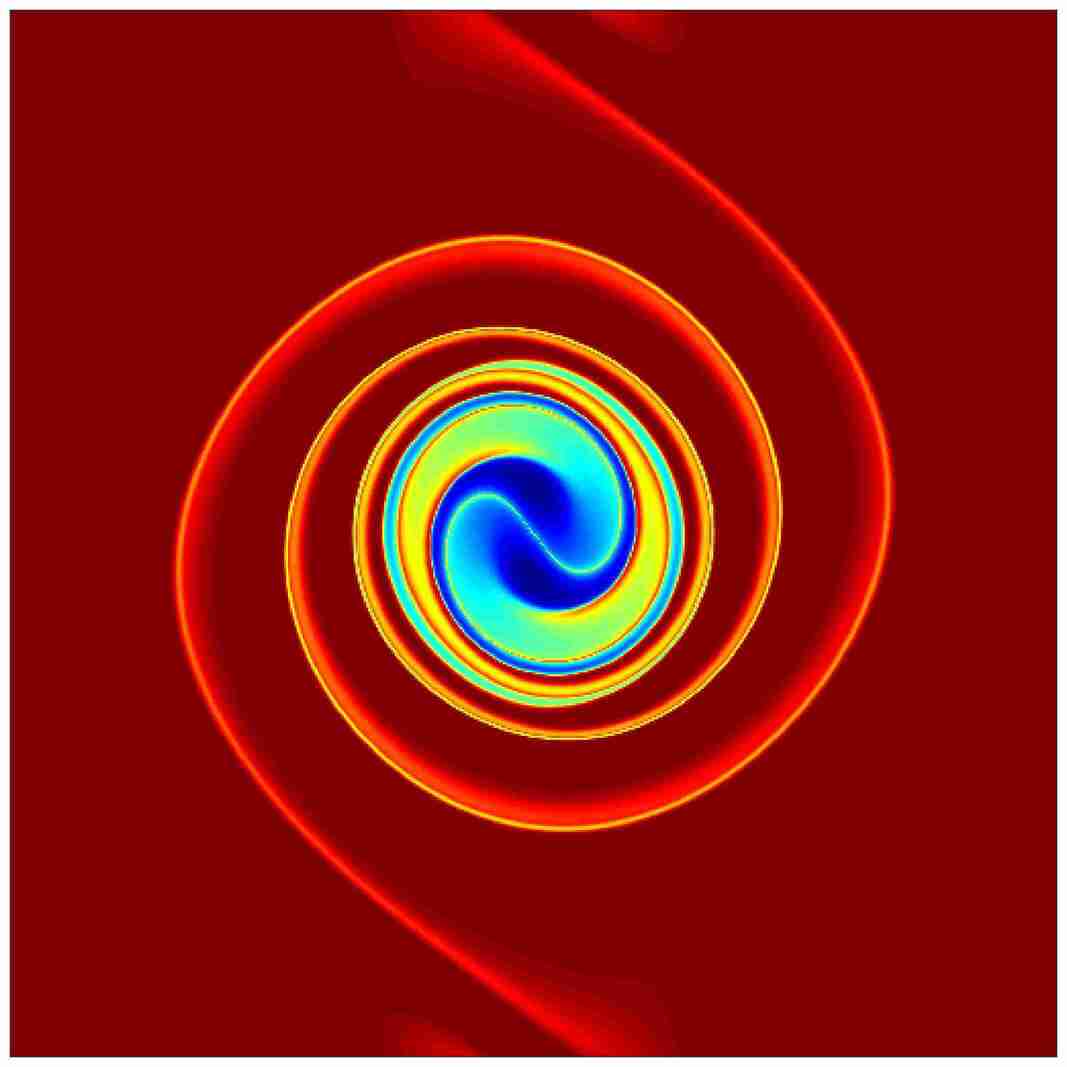}
\caption{$t = 6$}
\label{subfig:vm6}
\end{subfigure}
\begin{subfigure}{0.3\linewidth}
\centering
\includegraphics[width = \linewidth]{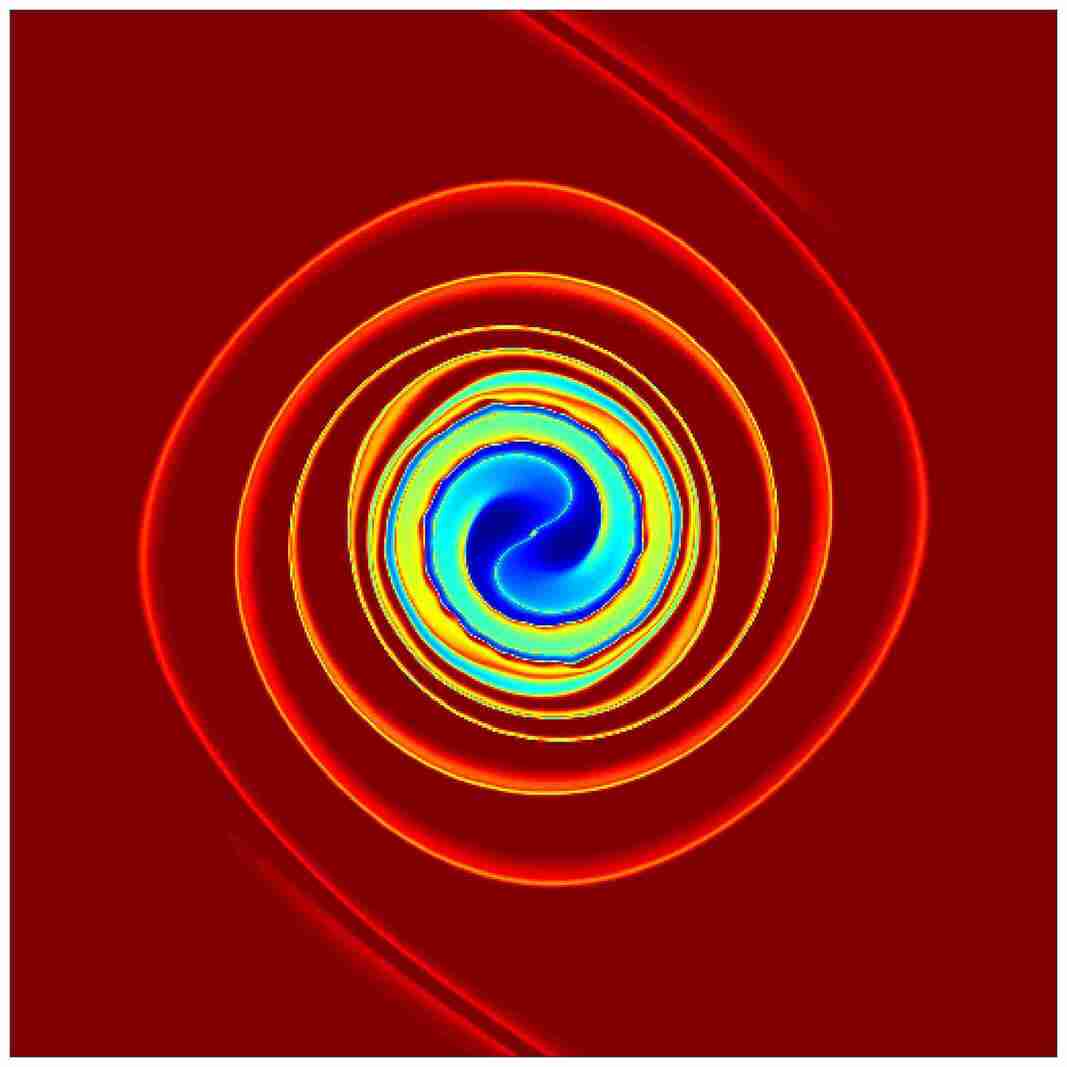}
\caption{$t = 8$}
\label{subfig:vm8}
\end{subfigure}
\begin{subfigure}{0.3\linewidth}
\centering
\includegraphics[width = \linewidth]{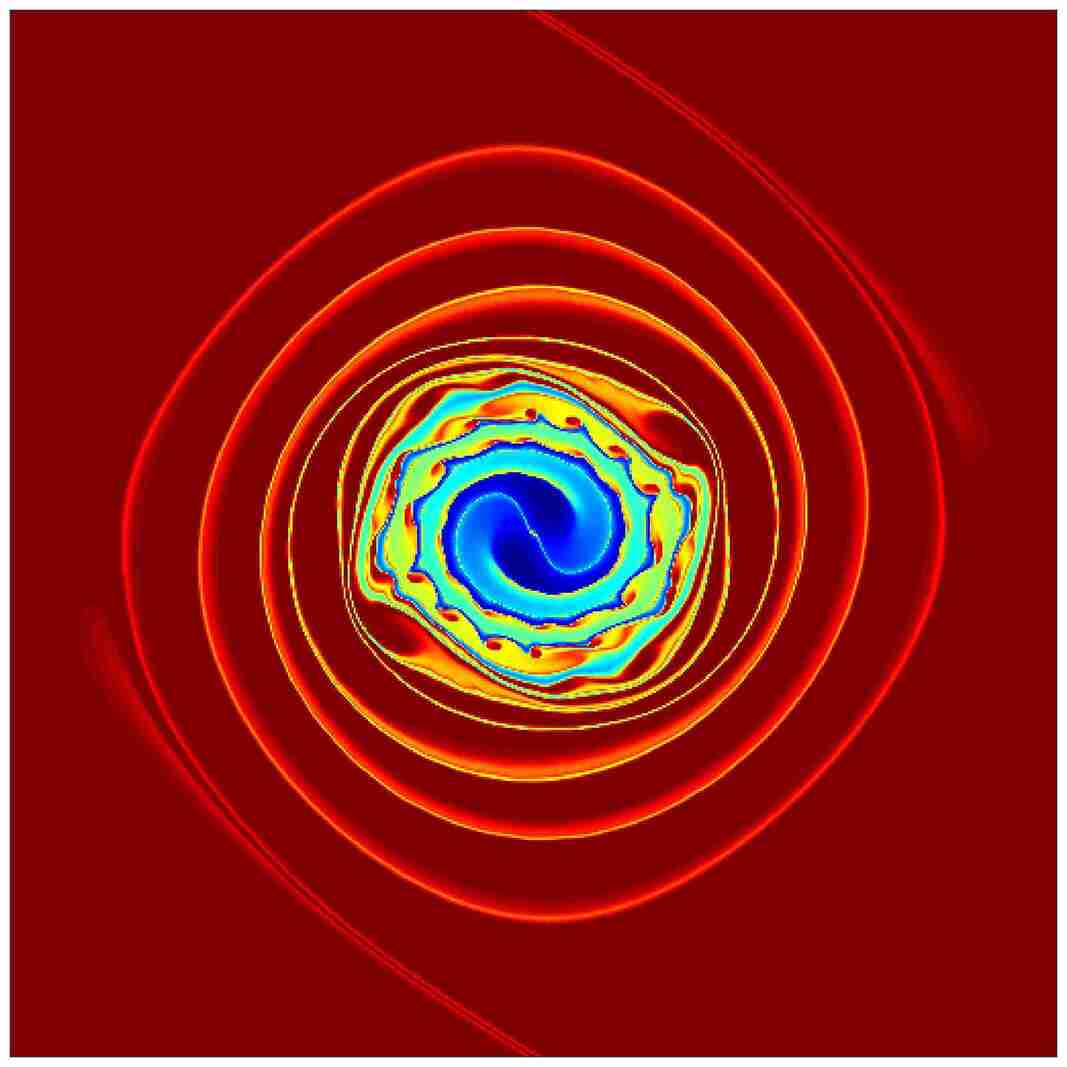}
\caption{$t = 10$}
\label{subfig:vm10}
\end{subfigure}
\begin{subfigure}{0.3\linewidth}
\centering
\includegraphics[width = \linewidth]{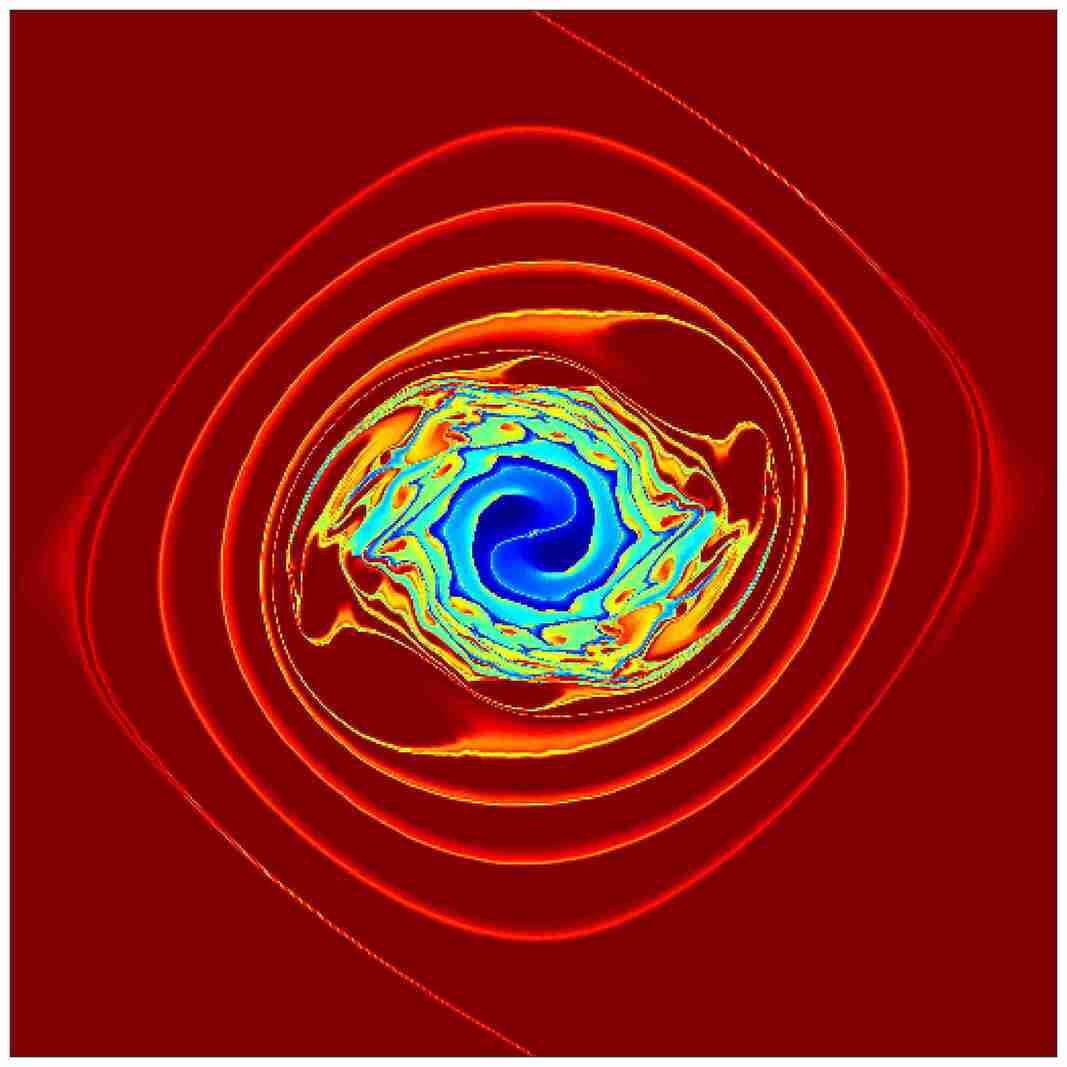}
\caption{$t = 12$}
\label{subfig:vm12}
\end{subfigure}
\begin{subfigure}{0.3\linewidth}
\centering
\includegraphics[width = \linewidth]{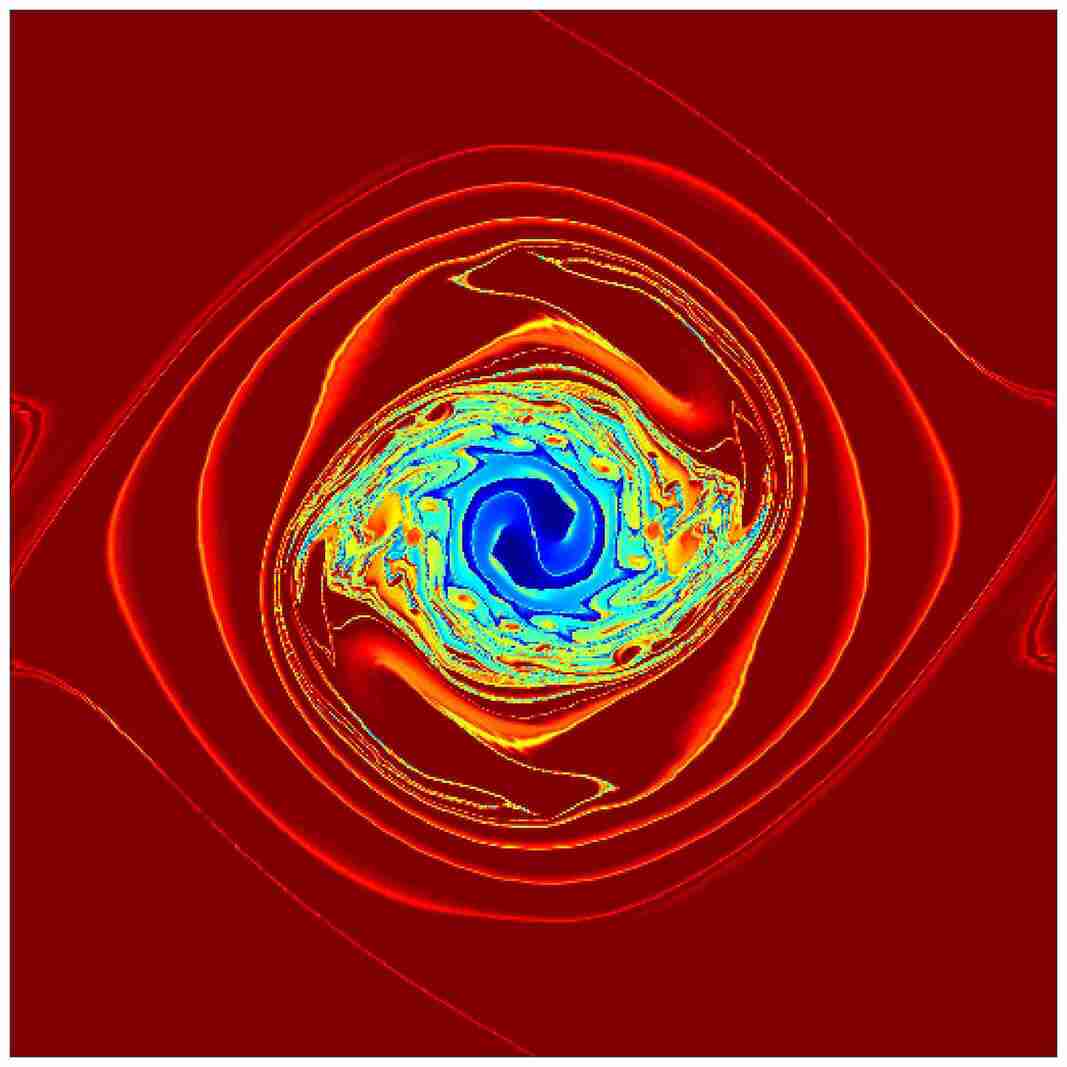}
\caption{$t = 14$}
\label{subfig:vm14}
\end{subfigure}
\begin{subfigure}{0.3\linewidth}
\centering
\includegraphics[width = \linewidth]{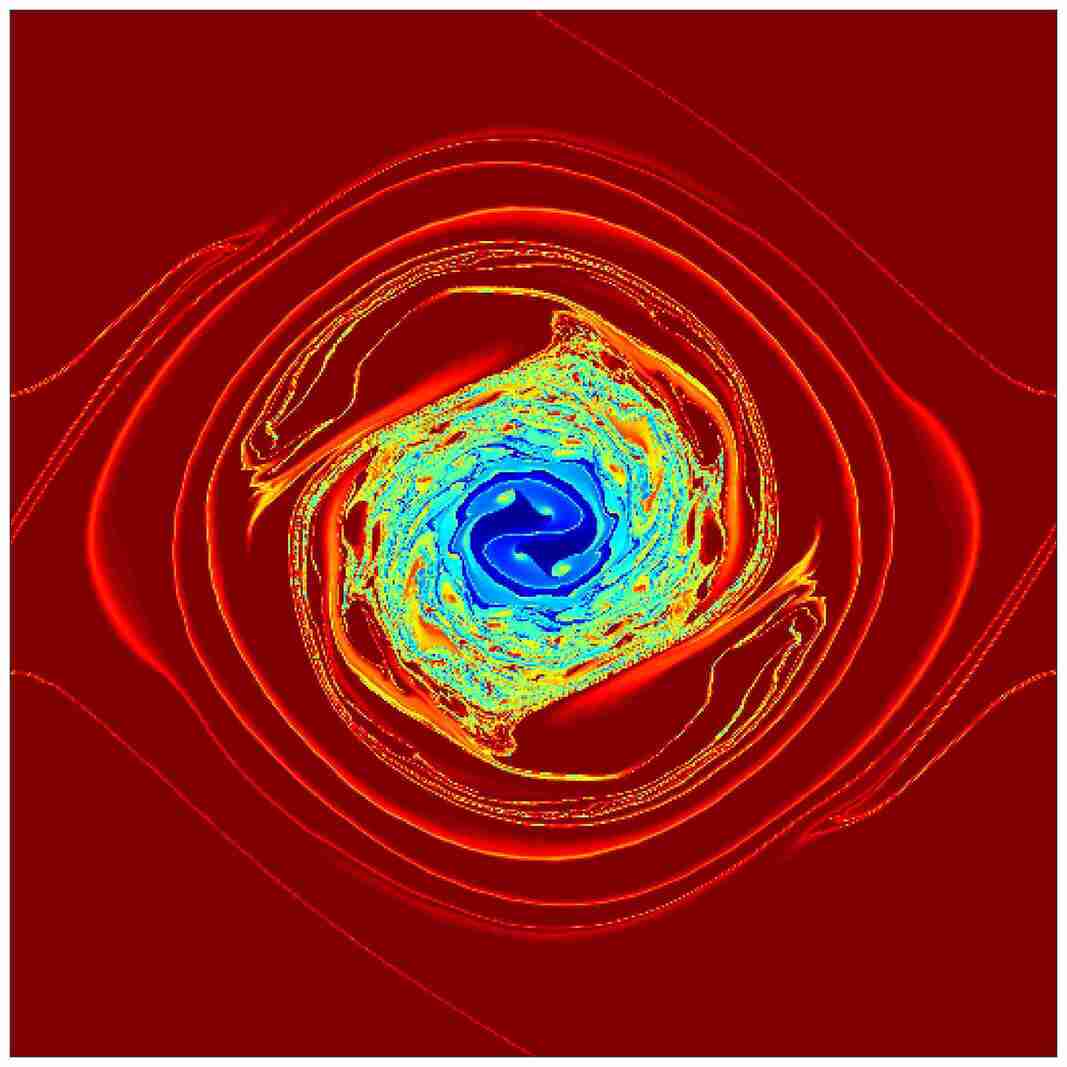}
\caption{$t = 16$}
\label{subfig:vm16}
\end{subfigure}
\begin{subfigure}{0.3\linewidth}
\centering
\includegraphics[width = \linewidth]{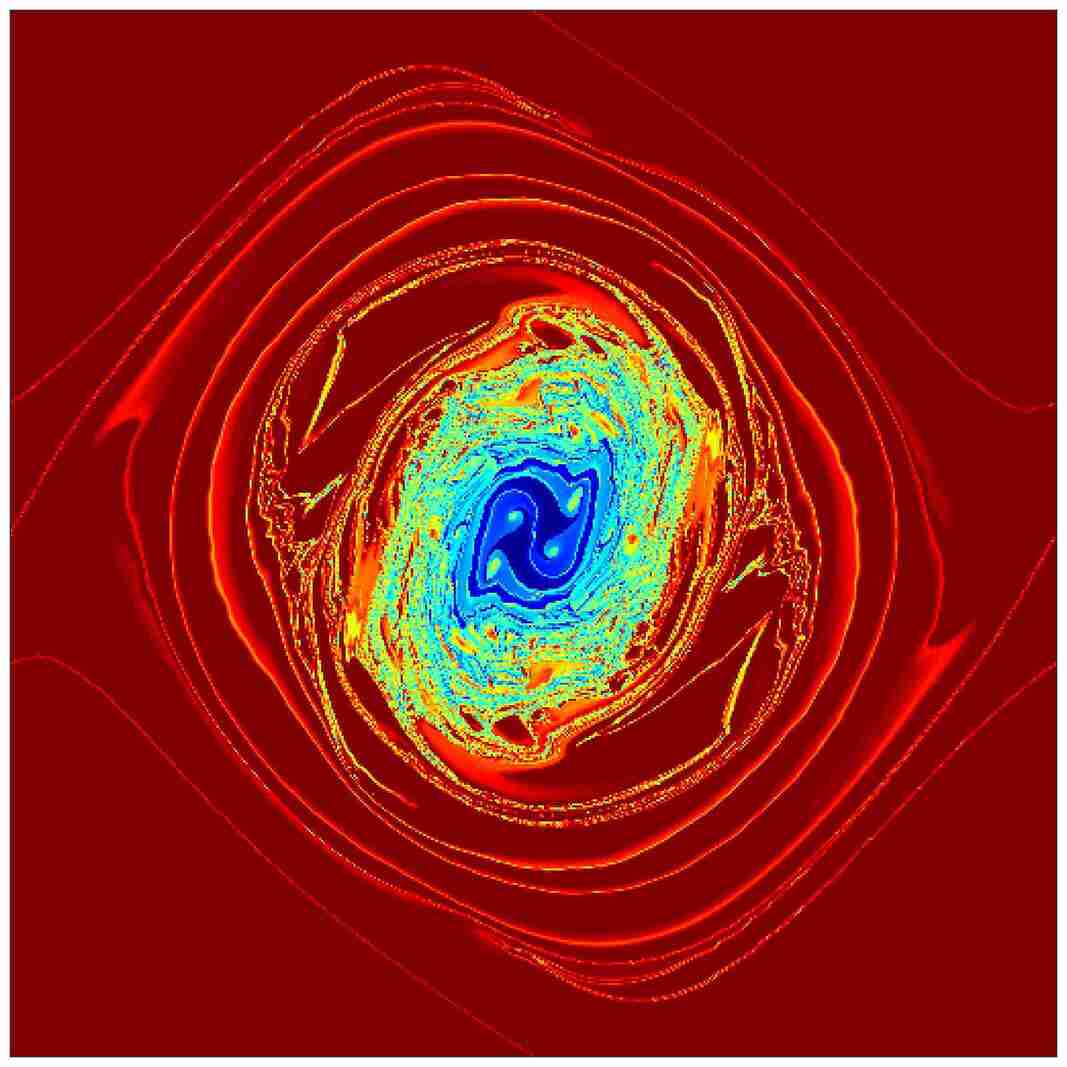}
\caption{$t = 18$}
\label{subfig:vm18}
\end{subfigure}
\caption{Evolution of the vorticity field in the vortex merger simulation.}
\label{fig:vortexMergerTime}
\end{figure}

The fine scale features produced by this flow requires a high amount of spatial resolution to evolve and represent. We performed the simulation using the CM method on a 512 grid for the map and 1/128 time step until $t = 20$. The results are shown in figures \ref{fig:vm20} and \ref{fig:vortexMergerTime}.

The final frame (figure \ref{fig:vm20}) at time 20 is obtained using a composition of 605 submaps. This allows for the representation of a tremendous amount of fine scale subgrid structures. To illustrate this, we take a gradual zoom towards the position $(x,y) = (13/32, 13/32)$ in the last frame. Figure \ref{fig:vortexMergerZoom} shows the zoomed view on the $t=20$ vorticity field. Each subfigure is obtained by evaluating the submap compositions on the subdomain corresponding to the zoomed view. Since the characteristic map has a functional definition, we can use the same number of sample points to generate each picture, therefore obtaining high resolution images of arbitrarily small regions in the domain. For instance, figure \ref{subfig:vmz13} shows the vorticity field in a region of size $1/8192 \times 1/8192$. The image is generated using $768^2$ sample points, providing an accurate depiction of the details seen at the fine scale level.

\begin{figure}[hp]
\centering
\begin{subfigure}{0.25\linewidth}
\centering
\includegraphics[width = \linewidth]{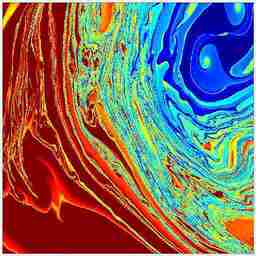}
\caption{width $= 2^{-2}$}
\label{subfig:vmz2}
\end{subfigure}
\begin{subfigure}{0.25\linewidth}
\centering
\includegraphics[width = \linewidth]{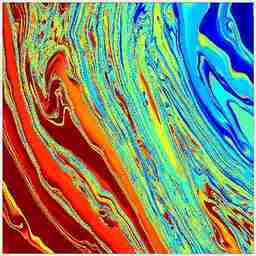}
\caption{width $= 2^{-3}$}
\label{subfig:vmz3}
\end{subfigure}
\begin{subfigure}{0.25\linewidth}
\centering
\includegraphics[width = \linewidth]{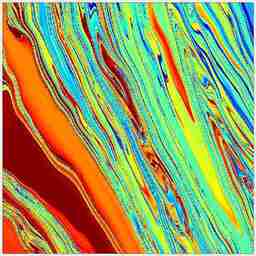}
\caption{width $= 2^{-4}$}
\label{subfig:vmz4}
\end{subfigure}
\begin{subfigure}{0.25\linewidth}
\centering
\includegraphics[width = \linewidth]{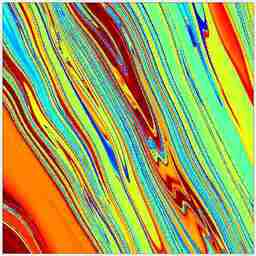}
\caption{width $= 2^{-5}$}
\label{subfig:vmz5}
\end{subfigure}
\begin{subfigure}{0.25\linewidth}
\centering
\includegraphics[width = \linewidth]{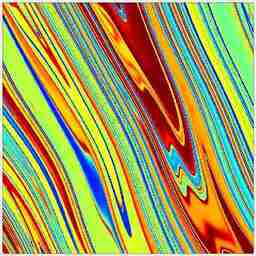}
\caption{width $= 2^{-6}$}
\label{subfig:vmz6}
\end{subfigure}
\begin{subfigure}{0.25\linewidth}
\centering
\includegraphics[width = \linewidth]{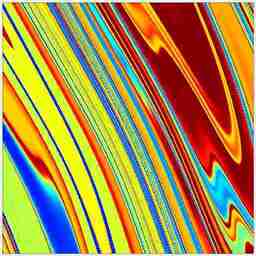}
\caption{width $= 2^{-7}$}
\label{subfig:vmz7}
\end{subfigure}
\begin{subfigure}{0.25\linewidth}
\centering
\includegraphics[width = \linewidth]{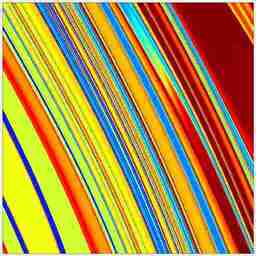}
\caption{width $= 2^{-8}$}
\label{subfig:vmz8}
\end{subfigure}
\begin{subfigure}{0.25\linewidth}
\centering
\includegraphics[width = \linewidth]{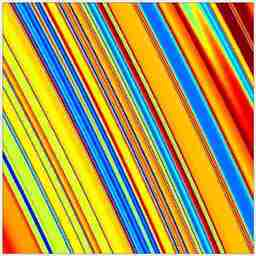}
\caption{width $= 2^{-9}$}
\label{subfig:vmz9}
\end{subfigure}
\begin{subfigure}{0.25\linewidth}
\centering
\includegraphics[width = \linewidth]{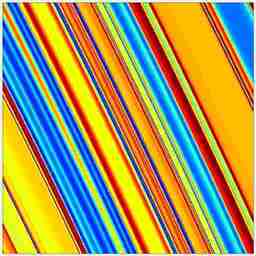}
\caption{width $= 2^{-10}$}
\label{subfig:vmz10}
\end{subfigure}
\begin{subfigure}{0.25\linewidth}
\centering
\includegraphics[width = \linewidth]{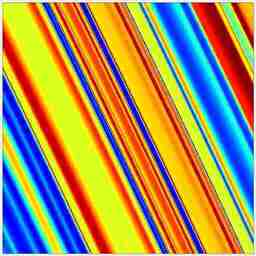}
\caption{width $= 2^{-11}$}
\label{subfig:vmz11}
\end{subfigure}
\begin{subfigure}{0.25\linewidth}
\centering
\includegraphics[width = \linewidth]{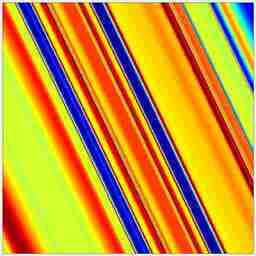}
\caption{width $= 2^{-12}$}
\label{subfig:vmz12}
\end{subfigure}
\begin{subfigure}{0.25\linewidth}
\centering
\includegraphics[width = \linewidth]{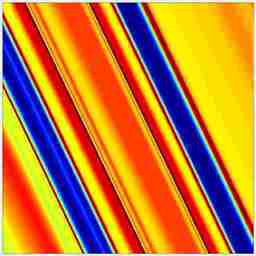}
\caption{width $= 2^{-13}$}
\label{subfig:vmz13}
\end{subfigure}
\caption{Gradual zoom on the last frame, each subfigure is a $2\times$ zoom on the previous.}
\label{fig:vortexMergerZoom}
\end{figure}

The numerical experiments in section \ref{sec:numTests} showcase several advantageous properties of the CM method for the 2D incompressible Euler equations. Firstly, the submap decomposition using the semigroup property of the characteristic map allows for quick and accurate computations on a coarse grid, circumventing the usual requirement of increasing spatial resolution due to exponential vorticity gradient growth. As evidence, solutions from a $128^2$ grid CM solver achieves the same vorticity spectrum as an $8192^2$ grid direct vorticity solver (see figure \ref{fig:compareSpectrum}). Furthermore, due to the volume preserving property of the characteristic map, the CM method achieves high accuracy enstrophy conservation for all times. Whereas other methods experience a spike in enstrophy error when the vorticity fields becomes complicated, the enstrophy conservation in CM is independent of the current time vorticity and is a direct result of volume preservation. This allows the enstrophy error to grow only linearly in time, regardless of the complexity of the vorticity field. Lastly, the functional definition of the numerical vorticity through composition with the backward map allows for an arbitrary spatial resolution of the solution. Furthermore, the submap decomposition generates the correct scales of gradients, ensuring that the increasing fine scale features in the vorticity solutions are properly represented as the resolution increases. This property is evidenced in figures \ref{fig:Wzoom_t4}, \ref{fig:LWzoom_t4}, \ref{fig:Wzoom_t8}, \ref{fig:LWzoom_t8} and more thoroughly in \ref{fig:vortexMergerZoom} in section \ref{subsec:subgrid}, where we zoom in on the solution to show the arbitrary spatial resolution achieved by the CM method.

\section{Conclusion}

In this paper, we have presented a novel numerical method for solving the 2D incompressible Euler equations. The work in this paper is an extension of the CM method for linear advection and is based on the GALS and Jet-Scheme frameworks. This method is unique in that it solves for the deformation map generated by the fluid flow and captures the geometry of the problem; all evolved quantities of interest can be obtained from this transformation. As a result, this scheme is characterized by the arbitrarily fine subgrid resolution it provides on the solutions, and a lack of artificial dissipation. Several key observations has lead to the development of this method. Firstly, the arbitrarily fine scales typically generated by an inviscid flow lead to the functional representation of the vorticity field though the pullback by the characteristic map, this approach not only preserves fine scales but more importantly avoids spatial truncations of the vorticity field hence eliminating artificial dissipation. These properties are demonstrated in the tests in section \ref{sec:numTests}, in particular in the zoomed view of the solutions. Secondly, the possible exponential growth in the vorticity gradient lead to the use of the semigroup structure of the flow maps to decompose the characteristic map: exponential growth can be generated by a composition of maps of fixed resolution. Lastly, the assumption that the dynamics of the fluid is mainly governed by the large scale low frequency features of the velocity allowed us to carry out the characteristic map computations on a coarse grid, improving the efficiency of the method. We drew a parallel between the use of coarse scale velocity and the Lagrangian-Averaged Euler equations.

The use of the characteristic map for the simulation of inviscid fluids opens many new possibilities for future research. The next step is to include the vorticity-stretching term in order to generalize this 2D method to 3 dimensional problems. There is also a possibility of including forcing terms and potentially extensions to the method which deals with forcing terms while maintaining the characteristic structure. Furthermore, even though the current hyperbolic advective structure of the method is somewhat incompatible with the parabolic diffusion term, there may be modifications to the framework which allows for the integration of a viscosity term which would allow for the use of the CM method on the 3D incompressible Navier-Stokes equations. Lastly, at a longer term, the implementation of boundary conditions and the application of the method to complex geometries is also an important problem to study. These are the current directions of our research as we believe that the CM method provides a unique and appropriate framework for solving more general problems in computational fluid dynamics.

\vspace*{1cm}
\bibliographystyle{siamplain}
\bibliography{CMEULER}





\end{document}